\definecolor{orange}{rgb}{1.0, 0.5, 0.0}
\declaretheorem[name=Theorem, numberwithin=section]{theorem}
\declaretheorem[name=Lemma, sibling=theorem]{lemma}
\declaretheorem[name=Proposition, sibling=theorem]{proposition}
\declaretheorem[name=Definition, sibling=theorem]{definition}
\declaretheorem[name=Question, sibling=theorem]{question}
\declaretheorem[name=Claim, sibling=theorem]{claim}
\declaretheorem[name=Claim, numbered=no]{claim*}
\declaretheorem[name=Remark, style=remark, sibling=theorem]{remark}
\renewcommand*{\theHtheorem}{\theHsection.\the\value{theorem}}
\newtcolorbox{problemBox}[1]{
  colback=white,
  colframe=black,
  coltitle=black,
  sharp corners,
  boxrule=0.8pt,
  toptitle=1mm,
  bottomtitle=1mm,
  enhanced,
  attach boxed title to top center={yshift=-3mm},
  boxed title style={
    colframe=white,
    colback=white,
    sharp corners,
    boxrule=0pt
  },
  title={#1} 
}
\newcommand{\pwr}[1]{\mathrm{pw}\left( #1 \right)}
\newcommand{\pw}{\mathrm{pw}}
\newcommand{\ns}{\mathrm{ns}}
\newcommand{\tw}{\mathrm{tw}}
\newcommand*\sg[1]{\{ #1 \}}
\newcommand{\cA}{\mathcal{A}}
\newcommand{\cB}{\mathcal{B}}
\newcommand{\cP}{\mathcal{P}}
\newcommand{\cQ}{\mathcal{Q}}
\newcommand{\tC}{\widetilde{C}}
\newcommand{\dist}{d}
\newcommand{\ecc}{\mathrm{ecc}}
\newcommand{\abs}[1]{\lvert #1 \rvert}
\title{A polynomial bound on the pathwidth of graphs edge-coverable by \texorpdfstring{$k$}{}  shortest paths}
\author[1]{Julien Baste}
\author[2]{Lucas De Meyer}
\author[3]{Ugo Giocanti\thanks{Supported by the National Science Center of Poland
under grant 2022/47/B/ST6/02837 within the OPUS 24 program, and partially supported by the French ANR
Project TWIN-WIDTH (ANR-21-CE48-0014-01).}}
\author[4]{Etienne Objois}
\author[5]{Timothé Picavet}
\affil[1]{University of Lille, CNRS, Centrale Lille, UMR 9189 CRIStAL, F-59000 Lille, France}
\affil[2]{Université Claude Bernard Lyon 1, CNRS, INSA Lyon, LIRIS, UMR 5205, 69622 Villeurbanne, France}
\affil[3]{Faculty of Mathematics and Computer Science, Jagiellonian University,
Kraków, Poland}
\affil[4]{IRIF, Université Paris Cité, France}
\affil[5]{LaBRI, Université de Bordeaux, France}
\date{}
\begin{document}
\maketitle

\begin{abstract}
Dumas, Foucaud, Perez and Todinca (2024) recently proved that every graph whose edges can be covered by $k$ shortest paths has pathwidth at most $O(3^k)$. In this paper, we improve this upper bound on the pathwidth to a polynomial one; namely, we show that every graph whose edge set can be covered by $k$ shortest paths has pathwidth $O(k^4)$, answering a question from the same paper. Moreover, we prove that when $k\leq 3$, every such graph has pathwidth at most $k$ (and this bound is tight). Finally, we show that even though there exist graphs with arbitrarily large treewidth whose vertex set can be covered by $2$ isometric trees, every graph whose set of edges can be covered by $2$ isometric trees has treewidth at most $2$.  
\end{abstract}
 
\vspace{.3cm}

\section{Introduction}
\label{sec: intro}

\emph{Graph covering} is an old and recurrent topic in graph theory. For a fixed class $\mathcal H$ of graphs, the covering literature aims to answer two types of problems. First are optimization problems: given a graph $G$, what is the minimum number of graphs from $\mathcal H$ needed to ``cover''\footnote{Various notions of coverings can usually be considered here.} $G$? We refer to Knauer and Ueckerdt~\cite{KU16} for a more general overview on the topic. The second type are structural problems: for the fixed class $\mathcal H$ of graphs and some $k\geq 1$, what is the structure of the graphs that can be ``covered'' by (at most) $k$ graphs from $\mathcal{H}$?

We say that a graph $G$ is \emph{edge-coverable} (resp. \emph{vertex-coverable}) by some subgraphs $H_1, \ldots, H_k$ if every \emph{edge} (resp. \emph{vertex}) of $G$ belongs to at least one of the subgraphs $H_i$.  
Motivated by some algorithmic applications to the \textsc{Isometric Path Cover} problem and its variants, Dumas, Foucaud, Perez and Todinca~\cite{DFPT24} recently worked on a particular metric version of such a covering problem. They proved that every graph edge-coverable (resp. vertex-coverable) by few shortest paths shares structural similarities with a simple path. More precisely, if a graph is edge-coverable or vertex-coverable by $k$ shortest paths, its \emph{pathwidth} is bounded by an exponential in $k$ (see \Cref{sec: Prelis} for a definition of pathwidth).

\begin{theorem}[\cite{DFPT24}]
 \label{thm: intro-expo}
 Let $k\geq 1$ and $G$ be a graph vertex-coverable by $k$ shortest paths. Then $\pw(G)=O(k\cdot 3^k)$. 
 Moreover, if $G$ is edge-coverable by $k$ shortest paths, then $\pw(G)=O(3^k)$. 
\end{theorem}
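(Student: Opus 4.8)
The plan is an induction on $k$. The base case $k=1$ is immediate once we reduce to $G$ connected (pathwidth is the maximum over components): if every edge of a connected $G$ lies on a single shortest path $P$, then each edge $uv$ satisfies $\dist_G(u,v)=1$, so $\dist_P(u,v)=1$ and $u,v$ are consecutive on $P$; hence $G=P$ and $\pw(G)=1$.

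For the inductive step, single out one of the covering geodesics, say $P_1=(v_0,\dots,v_\ell)$ with endpoints $s_1=v_0$ and $t_1=v_\ell$, and use it as a \emph{backbone}. Set $\phi(u)=\dist(s_1,u)-\dist(t_1,u)$. As a difference of two $1$-Lipschitz functions $\phi$ is $2$-Lipschitz, and since $\phi(v_i)=2i-\ell$ it is injective on $V(P_1)$ and takes every value in a full integer interval. Consequently the bags $X_c:=\phi^{-1}(\{c,c+1,c+2\})$, $c\in\mathbb Z$, form a path decomposition of $G$: each vertex lies in exactly three consecutive bags, and every edge $uv$ has $\abs{\phi(u)-\phi(v)}\le 2$ so its endpoints share a bag. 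This decomposition has width at most $3\max_c\abs{\phi^{-1}(c)}-1$, so it suffices to bound the size of a single level set $\phi^{-1}(c)$ by $O(3^{k-1})$.

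To do that I would recurse on $k$, applied to the induced subgraph $H:=G[\phi^{-1}(c)]$. As $G$ is connected with at least two vertices, edge-coverability makes it vertex-coverable, so every vertex of $H$ lies on some $P_j$. The backbone contributes at most one vertex and \emph{no edge} to $H$, because consecutive vertices of $P_1$ have $\phi$-values $2$ apart; and for $j\ge2$ the trace $P_j\cap\phi^{-1}(c)$ is a disjoint union of subpaths, each of which---being a subpath of a geodesic of $G$ that stays inside $\phi^{-1}(c)$---is a geodesic of $H$. Hence $H$ is edge-covered by geodesics coming only from $P_2,\dots,P_k$. If one can show this can be organised with effectively at most $k-1$ geodesics, the induction hypothesis controls $H$, and because $\phi^{-1}(c)$ has no extent in the backbone direction this should bootstrap to $\abs{\phi^{-1}(c)}=O(3^{k-1})$, each level of the recursion losing the factor $3$ coming from the three-consecutive-value bags; plugging back gives $\pw(G)=O(3^k)$. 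For the vertex-coverable variant the same skeleton works, but a path may now cross $\phi^{-1}(c)$ while ``skipping over'' vertices of the level set not incident to any of its edges, so one tracks each of the $k$ paths separately inside each bag, which is exactly the source of the extra multiplicative $k$ in $O(k\cdot3^k)$.

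The step I expect to be the real obstacle is turning ``$H$ is edge-covered by subpaths of $P_2,\dots,P_k$'' into ``$H$ is edge-covered by at most $k-1$ geodesics'': a geodesic can, a priori, leave the level set $\phi^{-1}(c)$ and re-enter it, cutting some $P_j$ into many pieces, and the recursion terminates only if one can bound the number of re-entries (or amalgamate the pieces). This requires a genuine structural lemma exploiting simultaneously that the backbone $P_1$ is isometric and that each $P_j$ is itself a shortest path; this is where the heart of the argument---and, I suspect, the exponential dependence---lives, and where the subsequent improvement to $O(k^4)$ must reorganise the count.
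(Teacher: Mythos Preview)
This theorem is quoted from \cite{DFPT24}; the present paper does not prove it, it only summarises the original argument in one sentence: fix an arbitrary vertex $u$, show by a \emph{branching} argument that every BFS layer $\{v:\dist_G(u,v)=i\}$ has at most $O(3^k)$ (resp.\ $O(k\cdot 3^k)$) vertices, and take unions of two consecutive layers as bags. So the benchmark here is that sketch, not a detailed proof.

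Your outline is genuinely different---a two-sided potential $\phi(\cdot)=\dist(s_1,\cdot)-\dist(t_1,\cdot)$ instead of a one-sided BFS, and an induction on $k$ instead of a direct layer-size count---but it carries a real gap beyond the one you already flag. Your inductive hypothesis is a \emph{pathwidth} bound: if $H:=G[\phi^{-1}(c)]$ were edge-covered by $k-1$ geodesics, induction would give $\pw(H)=O(3^{k-1})$. What you need, however, is a \emph{cardinality} bound $\abs{\phi^{-1}(c)}=O(3^{k-1})$, and small pathwidth says nothing about the number of vertices (a long path has pathwidth $1$). The phrase ``$\phi^{-1}(c)$ has no extent in the backbone direction, so this should bootstrap'' does not bridge this: $P_1$ contributing a single vertex to $H$ constrains neither the number of $\phi_2$-levels inside $H$ nor the length of a path-decomposition of $H$. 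To make the recursion close you would have to strengthen the inductive statement to a size bound on level sets themselves, and at that point the re-entry issue you identify (each $P_j$ may split into many pieces inside a level set) is no longer a technicality to be handled later but the entire content of the proof, with no mechanism proposed to control it.

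In short, the scheme as written is not a proof: the inductive statement is of the wrong type, and once corrected the acknowledged obstacle becomes the whole argument. The approach actually used in \cite{DFPT24} sidesteps both problems by bounding BFS-layer sizes directly via a branching count rather than recursing through pathwidth.
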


The proof of \Cref{thm: intro-expo} uses a branching approach to show that, in every graph vertex-coverable (resp. edge-coverable) by at most $k$ shortest paths, and for every vertex $u$ and every $i\geq1$, the number of vertices at distance exactly $i$ from $u$ in $G$ is $O(k\cdot3^k)$ (resp. $O(3^k)$). We also mention that a ``coarse'' variant of \Cref{thm: intro-expo} has recently been proved~\cite{hatzel2025graphs}. 
Then, the bags of the path decomposition proving \Cref{thm: intro-expo} are the union of every two consecutive layers in a BFS-layering of $G$ (starting from an arbitrary vertex). 
Such a path decomposition is clearly not optimal in general: if we let $S_h$ denote the star with $h$ leaves, then $\pw(S_h)=1$, while such a decomposition will always have width at least $\lfloor \tfrac{h}{2}\rfloor$, whatever the choice of $u$ is. 
Dumas et al.\ \cite{DFPT24} asked if the bounds from \Cref{thm: intro-expo} can be made polynomial in $k$. Our main result is a positive answer in the edge-coverable case.

\begin{restatable}{theorem}{mainPoly}
\label{thm: main}
 Let $k\geq 1$, and $G$ be a graph edge-coverable by $k$ shortest paths. Then $\pw(G)=O(k^4)$. 
\end{restatable}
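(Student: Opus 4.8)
The plan is to replace the crude ``two consecutive BFS layers'' path‑decomposition behind \Cref{thm: intro-expo} by a cheaper, nested sweep. First the routine reductions: delete isolated vertices and extra components so that $G$ is connected with $V(G)=\bigcup_i V(P_i)$, and record the cheap but crucial fact that $\Delta(G)\le 2k$, since each geodesic $P_i$ contributes at most $2$ to the degree of any vertex. Now fix one geodesic of the cover, say $P:=P_1=(v_0,\dots,v_N)$, as a \emph{spine}, and for $v\in V(G)$ set $\mu(v):=\tfrac12\bigl(N+d(v_0,v)-d(v,v_N)\bigr)\in[0,N]$. Then $\mu(v_i)=i$ and $\mu$ is $1$-Lipschitz along edges (both $d(v_0,\cdot)$ and $d(\cdot,v_N)$ move by at most $1$), so $\mu$ is a legitimate ``position along the spine'' coordinate with level sets $M_\tau:=\mu^{-1}(\tau)$, $\tau\in\tfrac12\mathbb Z$, and edges only joining $M_\tau$ to $M_{\tau\pm 1/2}$. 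Taking $M_\tau\cup M_{\tau+1/2}$ as bags would reprove \Cref{thm: intro-expo}-style bounds, but these level sets can still be enormous (in a spider through the centre $c$, the whole of $P_2$ lands in the single level set $M_{\mu(c)}$), so the decomposition must also sweep \emph{inside} a level set.

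The combinatorial heart is a structural lemma controlling how a geodesic $Q=P_j$ of the cover interacts with the spine. I would aim to show that, read along $Q$, the coordinate $\mu$ reverses direction only $O(k)$ times — equivalently, each $M_\tau$ meets $Q$ in at most $O(k)$ subpaths — and, more to the point, that $G[M_\tau]$ is itself edge‑covered by $O(k^2)$ geodesics (the $O(k)$ ``fold pieces'' of each of $P_2,\dots,P_k$ inside $M_\tau$), all attached to the single spine vertex $v_\tau$ together with at most $O(k)$ further ``portal'' vertices lying in $M_{\tau\pm1/2}$. The intuition for why $Q$ cannot fold back along the spine more than $O(k)$ times is that each fold forces, near the turning vertex, a short detour/cycle whose edges lie on neither $P$ nor $Q$, hence must be paid for by the remaining $k-2$ geodesics; iterating this charging over all $k$ geodesics caps the number of folds. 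Turning ``folds of one geodesic relative to another'' into a genuine obstruction to coverability by the others is the step I expect to be the main obstacle — it is presumably done by an exchange argument, or by an induction on $k$ that strengthens the statement being maintained; without it, $\mu$ could wiggle uncontrollably and the whole scheme collapses.

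Granting such a lemma, the decomposition is a nested sweep: the outer sweep moves $\tau$ from $0$ to $N$ and keeps in the bag $\{v_\tau,v_{\tau+1}\}$ together with the $O(k)$ portal vertices of the adjacent level sets; at a fixed $\tau$ we splice in a path‑decomposition of $G[M_\tau]$ anchored at those portals, obtained by a second application of the same idea — since $G[M_\tau]$ is covered by $O(k^2)$ geodesic pieces all hanging off $O(k)$ common vertices, sweeping along each piece costs $O(1)$ active vertices, for width $O(k^2)\cdot O(1)+O(k)=O(k^2)$ per level set. The dominant term comes from the junction between consecutive level sets, where one must simultaneously carry, for each geodesic and each of its $O(k)$ fold pieces, one ``active interval'' boundary, i.e.\ $O(k)\cdot O(k)\cdot O(k)=O(k^3)$ vertices, plus the $O(k^2)$ from the inner sweep and the $O(k)$ portals; bookkeeping the exact set of vertices that must coexist (and pushing the constants and exponents) yields bags of size $O(k^4)$, hence $\pw(G)=O(k^4)$. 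For the sharper claim $\pw(G)\le k$ when $k\le 3$ one does not invoke this machinery: with so few geodesics the folds and portals essentially vanish, and one argues directly that $G$ is a bounded ``chain of small pieces'' (pendant paths, cycles, and theta‑like blocks strung along the pairwise intersection vertices, which one shows occur in a common linear order on all paths), whose pathwidth is read off explicitly.
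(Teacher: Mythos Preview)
Your proposal is a plan, not a proof: the central structural step --- that each covering geodesic $Q$ has only $O(k)$ ``folds'' of the spine coordinate $\mu$, so that each level set $M_\tau$ meets $Q$ in $O(k)$ subpaths --- is never established, and you explicitly flag it as the main obstacle. The intuition you offer for it (``each fold forces, near the turning vertex, a short detour/cycle whose edges lie on neither $P$ nor $Q$'') is not justified: a local extremum of $\mu|_Q$ is a purely metric event (a change in which of $d(v_0,\cdot)$ and $d(\cdot,v_N)$ is increasing along $Q$) and does not in any obvious way produce an edge outside $P\cup Q$, so there is nothing concrete to charge to the remaining $k-2$ geodesics. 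Without this lemma, as you say yourself, the nested sweep collapses. There are also smaller slips: $\mu$ can change by $0$ or $\pm 1$ along an edge, not only $\pm\tfrac12$, so level sets $M_\tau$ can carry internal edges and edges can skip a half-level; and the final $O(k)\cdot O(k)\cdot O(k)$ bookkeeping for the ``junction'' bags is asserted rather than derived.

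The paper's route is entirely different and bypasses any global fold count. It shows that for each spine $P_i$ there is a separator $X_i$ of size $O(k^3)$ such that every component of $G-X_i$ meeting $P_i$ sits inside a subgraph $G_0$ edge-covered by $O(k)$ paths \emph{parallel} to $P_i$ (subpaths of $a_ib_i$-geodesics); such a $G_0$ is $O(k)$-layered by the BFS from $a_i$ and hence has pathwidth $O(k)$. Taking $X=\bigcup_i X_i$ and using $\pw(G)\le|X|+\pw(G-X)$ gives $O(k^4)$. The separator $X_i$ is assembled by first ``reducing'' the cover so that as many paths as possible meet $P_i$, decomposing each such path into a middle part (itself covered by at most three parallel subpaths) and two bad tails, and then, for each bad tail $A$ and each parallel piece $P$ it touches, finding $O(k)$ vertices that cut $V(A)\cap V(P)$ from $P_i$ inside $G_0$. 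The combinatorial heart there is a \emph{local} monotonicity notion (``$\nearrow$-free'' / ``$\searrow$-free'' subpaths), loosely analogous in spirit to your fold idea but restricted to one bad tail at a time and actually proved; no global charging over the other geodesics is needed.
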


Our proof uses a different approach from the one of  \cite{DFPT24}. Roughly speaking, it consists in finding some separator $X$ of polynomial size, such that for every component $C$ of $G-X$, $C$ has a ``nice'' layering with respect to one of the $k$ covering paths.

In terms of lower bounds, we provide for every $k\geq 1$, a construction of a graph edge-coverable by $k$ shortest paths and with pathwidth equal to $k$.

\begin{restatable}{proposition}{mainlb}
\label{prop:low_bound}
 For any integer $k\geq 1$, there exists a graph $G_k$ that is 
 \begin{enumerate}[label=(\alph*)]
  \item\label{enum:coverable} edge-coverable by $k$ shortest paths, and
  \item\label{enum:pathwidth} that has pathwidth at least $k$.
 \end{enumerate}
\end{restatable}

To our knowledge, no better lower bound is known. Our second main result is that this lower bound is tight for $k\leq 3$.
\begin{restatable}{theorem}{mainthreepaths}
\label{thm: 3-paths}
 Let $k\leq 3$, and $G$ be a graph edge-coverable by $k$ shortest paths. Then $\pw(G)\leq k$. 
\end{restatable}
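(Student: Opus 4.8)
The plan is to treat $k=1,2,3$ in turn, after the standard reductions. Since $\pw$ is the maximum over the connected components and is unaffected by isolated vertices, we may assume $G$ is connected and has no isolated vertex; and since a one‑vertex ``path'' covers no edge, we may discard any trivial $P_i$, so we may assume each $P_i$ has at least one edge (if fewer than $k$ paths remain, a smaller case applies). The case $k=1$ is then immediate: $E(G)$ covered by a single shortest path $P_1$ forces $G$ to be a subgraph of $P_1$, hence a disjoint union of paths, so $\pw(G)\le 1$.

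The key tool is an \emph{alignment lemma}: if $P=p_0p_1\cdots$ and $Q=q_0q_1\cdots$ are shortest paths in $G$ with $V(P)\cap V(Q)\neq\emptyset$, then, after possibly reversing $Q$, there is an integer $c$ such that $V(P)\cap V(Q)=\{p_i:i\in I\}=\{q_{i+c}:i\in I\}$ for some index set $I$. Indeed, whenever $p_i=q_j$ and $p_{i'}=q_{j'}$ we have $|i-i'|=\dist(p_i,p_{i'})=|j-j'|$, because sub-paths of shortest paths are shortest; thus $i\mapsto j$ is a partial isometry of $\mathbb{Z}$, hence monotone (a three-point argument), hence a translation after orienting $Q$. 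A consequence is that $P\cup Q$, read along the resulting common ``coordinate axis'', is a chain of bridges and even cycles --- one cycle per maximal detour of $Q$ off $P$, each detour having the same length as the $P$-segment it bypasses --- carrying at most two pendant sub-paths near each end (the portions of $P$ or of $Q$ sticking out past the other). In particular the block structure of $P\cup Q$ is essentially a path.

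For $k=2$, since $G$ is connected $P_1$ and $P_2$ meet, so $G=P_1\cup P_2$ has exactly this shape. We produce a path-decomposition of width $\le 2$ by first handling the $O(1)$ pendant sub-paths and then sweeping through the chain block by block: for an even cycle $C_{2m}$ with junction vertices $u,u'$ we use the bags $\{u,y_i,y_{i+1}\}$ going around the cycle from $u$, and we glue the decompositions of consecutive blocks at their shared junction. Every bag then has size $\le 3$, so $\pw(G)\le 2$; this is tight, since an even cycle is covered by two shortest paths and has pathwidth $2$.

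For $k=3$, the plan is to apply the alignment lemma to all three pairs of paths and to overlay $P_2$ and $P_3$ along the coordinate axis of $P_1$: at each coordinate there is at most one vertex of each $P_i$, and between consecutive junction coordinates the relevant subgraph is a union of at most three sub-paths of equal length joining two junction vertices --- a subgraph of a generalized theta graph, of pathwidth at most $2$, with decompositions placing both junctions in an end bag. A coordinate-by-coordinate sweep carrying only $O(1)$ auxiliary vertices, exactly as for $k=2$, should then yield $\pw(G)\le 3$. I expect the real difficulty to be concentrated here: one must describe how the detours of $P_2$ and $P_3$ --- which also interact \emph{with each other}, not only with $P_1$, and can therefore wrap around one another off $P_1$ and create genuine branching in the block structure (this is exactly where the extra unit over the $k=2$ bound is spent) --- can be interleaved along the axis, rule out or explicitly handle configurations in which three shortest paths locally produce a too-dense subgraph, and then check by a finite but delicate case analysis on these interleavings that the sweep never keeps more than four vertices in a bag. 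A subtlety worth isolating is that the coordinate of a vertex off $P_1$ is defined only combinatorially, via a detour, so one must verify that a vertex lying in both a $P_2$-detour and a $P_3$-detour gets the same coordinate from each (or instead argue directly with the BFS layers from an endpoint of $P_1$, bounding the number of vertices per layer).
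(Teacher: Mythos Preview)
Your treatment of $k\le 2$ is correct and matches the paper's approach: your ``alignment lemma'' is exactly the paper's observation (their \Cref{clm: ADN}) that two shortest paths meet in a set ordered the same way along both, and the resulting ``chain of bridges and even cycles'' is precisely what the paper calls a \emph{skewer}.

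For $k=3$, however, what you have written is a strategy, not a proof, and the strategy has real obstructions that you have not overcome. Your plan is to align $P_2$ and $P_3$ along a single coordinate axis determined by $P_1$ and sweep. But a vertex lying simultaneously on a $P_2$-detour and a $P_3$-detour off $P_1$ need not receive the same coordinate from each: the alignment lemma only controls pairwise intersections, and there is no reason the three pairwise translations should be mutually consistent. Your fallback, ``argue directly with the BFS layers from an endpoint of $P_1$'', does not help either: a shortest path $P_2$ is in general \emph{not} vertical with respect to $a_1$, so a single BFS layer can contain many vertices of $P_2$ --- this is precisely why the naive layering argument only gives the exponential bound of \Cref{thm: intro-expo}, not a linear one. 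You also do not address the case where $P_3$ avoids $P_1$ entirely. Finally, even granting a coherent coordinate, the assertion that each slab between consecutive junctions is ``a subgraph of a generalized theta graph'' is false in general: detours of $P_2$ and $P_3$ can meet each other off $P_1$ and create genuine branching (you note this yourself), and you give no argument bounding the bag size by $4$ in that situation.

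The paper's proof for $k=3$ is organized quite differently. Rather than seeking a global axis, it first disposes of several degenerate configurations (one pairwise intersection contained in the third path; one path meeting the others in at most two vertices; two vertices common to all three paths), then fixes a carefully chosen vertex $u_0\in V(P_1)\cap V(P_2)\setminus V(P_3)$, splits $P_1$ and $P_2$ at $u_0$ into half-paths $A_1,B_1,A_2,B_2$, and performs a case analysis on the order in which $P_3$ meets $A_2,B_1,B_2$ (including whether $P_3$ ``bounces'' between two of them). In each case the decomposition is built locally, typically by showing that certain subpaths are vertical with respect to $u_0$ (or another well-chosen vertex) and gluing small path-decompositions at cut sets of size $2$. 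The key idea you are missing is this local, separator-based assembly: one does not try to layer the whole graph from a single root, but rather identifies a vertex with respect to which the relevant pieces are vertical and treats the remaining pendant parts via the $k=2$ structure.
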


Given a graph $G$, a subgraph $H$ of $G$ is \emph{isometric} if for every two vertices $x,y$ of $H$, the distances between $x$ and $y$ are the same in $G$ and $H$. In other words, if $x$ and $y$ are connected in $G$, then there must exist a shortest path between them in $G$ which is included in $H$. 

In view of the previous results, the following question naturally arises: given a family $\mathcal H$ of graphs, what is the structure of a graph $G$ edge/vertex-coverable by at most $k$ \emph{isometric} subgraphs all isomorphic to graphs in $\mathcal H$? For instance, if $\mathcal H$ is the family of all trees, does $G$ somehow look like a tree? 
   
Extending a result of Aigner and Fromme~\cite{AF84} for paths, Ball, Bell, Guzman, Hanson-Colvin and Schonsheck~\cite{BBGHCS17} proved that every graph vertex-coverable by $k$ isometric subtrees has \emph{cop number} at most $k$. In particular, it is well-known that the cop number of a graph is always upper-bounded by its \emph{treewidth} (see \Cref{sec: trees} for a definition of treewidth). 
In analogy to \Cref{thm: intro-expo}, a natural question is then the following: is it true that every graph which is vertex/edge-coverable by a small number of isometric subtrees has small treewidth? This question turns out to have a negative answer when considering vertex-coverability, as shown by the following example, pointed to us by Marcin Bria\'nski (personal communication): for every $t\geq 1$, let $G_t$ be the graph on $2t+2$ vertices obtained from the biclique $K_{t,t}$ after adding two vertices  $a,b$ of degree $t$ respectively adjacent to all vertices from each of the sides of $K_{t,t}$ (see \Cref{fig:vertex-coverable-trees} for the case $t=3$). 
Then, $G_t$ is vertex-coverable by the two induced stars $S_a, S_b$ respectively centered in $a$ and $b$ (which are in particular isometric subgraphs of $G_t$), but has treewidth at least $t$, as it contains $K_{t,t}$ as a minor. Our third result implies that any such construction fails when considering graphs which are edge-coverable by two isometric trees, as we show that they have treewidth at most $2$.

\begin{figure}
  \centering
  \includegraphics[scale=1.5]{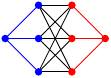}
  \caption{This graph is vertex-coverable by $2$ isometric trees (one with red vertices, the other with blue vertices) and contains $K_{3,3}$ as a minor (black edges). Hence, it has treewidth at least $3$.}
  \label{fig:vertex-coverable-trees}
\end{figure}

\begin{restatable}{theorem}{mainTrees}
\label{thm: 2-trees}
  Let $G$ be a graph which is edge-coverable by $2$ isometric trees. Then $\tw(G) \leq 2$.
\end{restatable}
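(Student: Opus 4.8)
Here is a plan.

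\medskip
\noindent\textbf{Step 1 (isometry forces rigid structure).}
The plan is to use the isometry of $T_1$ and $T_2$ to pin down the structure of $G$, reduce to a small ``core'', and exhibit an explicit width-$2$ tree decomposition of it. We may assume $G$ is connected and $V(G)=V(T_1)\cup V(T_2)$, and we set $W=V(T_1)\cap V(T_2)$, $A=V(T_1)\setminus V(T_2)$ and $B=V(T_2)\setminus V(T_1)$. The crucial observation is that, since $T_i$ is a tree that is isometric in $G$, for all $x,y\in V(T_i)$ the unique $T_i$-path from $x$ to $y$ is a shortest $xy$-path of $G$, so $d_{T_i}(x,y)=d_G(x,y)$; in particular any edge of $G$ with both ends in $V(T_i)$ already lies in $E(T_i)$. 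Hence every edge incident to $A$ lies in $E(T_1)\setminus E(T_2)$, every edge incident to $B$ lies in $E(T_2)\setminus E(T_1)$, so there is \emph{no} $A$--$B$ edge; and $E(G[A\cup W])=E(T_1)$, $E(G[W\cup B])=E(T_2)$, i.e.\ $T_1=G[A\cup W]$ and $T_2=G[W\cup B]$ are induced (hence connected).

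\medskip
\noindent\textbf{Step 2 (reduction to a core).}
Let $\tau_i$ be the minimal subtree of $T_i$ spanning $W$. Then all leaves of $\tau_i$ lie in $W$, $V(\tau_1)\setminus W\subseteq A$, $V(\tau_2)\setminus W\subseteq B$, and $T_i$ is obtained from $\tau_i$ by attaching trees at single vertices. Since attaching trees at single vertices does not raise the treewidth above $\max(\cdot,1)$, we get $\tw(G)=\max(\tw(\tau_1\cup\tau_2),1)$, so it suffices to bound $G_0:=\tau_1\cup\tau_2$. Note that $d_{\tau_1}$ and $d_{\tau_2}$ restricted to $W$ both equal $d_G|_W$, hence they coincide; write $\rho$ for this common tree metric on $W$. (We stress that the conclusions of Step 1 \emph{alone} do not force $\tw\le 2$: a subdivision of $K_4$ with $W$ the four branch vertices satisfies all of them. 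So the compatibility $d_{\tau_1}|_W=d_{\tau_2}|_W$ must be used.)

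\medskip
\noindent\textbf{Step 3 (the doubled tree).}
A finite tree metric has an essentially unique minimal tree realization (unique up to an isomorphism fixing $W$ pointwise): the tree obtained from $\tau_i$ by suppressing every degree-$2$ vertex outside $W$ is one and the same tree $R$, with $W\subseteq V(R)$, all leaves of $R$ in $W$, $d_R|_W=\rho$, and every vertex of $V(R)\setminus W$ of degree $\ge 3$. Thus $\tau_1$ and $\tau_2$ are subdivisions of $R$ that agree on the positions of the vertices of $W$, the vertices of $V(R)\setminus W$ being realised inside $A$ in $\tau_1$ and inside $B$ in $\tau_2$. Suppressing all degree-$2$ vertices of $G_0$ outside $W$ therefore turns $G_0$ into the (multi)graph $R^{\dagger}$ whose vertex set is $W$ together with two copies $v^{A},v^{B}$ of each $v\in V(R)\setminus W$, in which each edge $uv$ of $R$ yields the two edges $\sigma_A(u)\sigma_A(v)$ and $\sigma_B(u)\sigma_B(v)$, where $\sigma_A$ fixes $W$ and maps $v\notin W$ to $v^{A}$ (and likewise $\sigma_B$). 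Since $G_0$ is then a subdivision of $R^{\dagger}$, $\tw(G_0)\le\max(\tw(R^{\dagger}),2)$, and it remains to prove $\tw(R^{\dagger})\le 2$.

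\medskip
\noindent\textbf{Step 4 (a width-$2$ decomposition, and the crux).}
If $\lvert W\rvert\le 1$ then $G$ is a tree; otherwise root $R$ at some $r\in W$, take as decomposition tree a copy of $R$ with each edge subdivided, attach to each node $x$ of $R$ the bag $\beta(x)=\{x\}$ if $x\in W$ and $\beta(x)=\{x^{A},x^{B}\}$ otherwise, and along the subdivided edge between $x$ and its parent $x'$ insert a short chain of bags covering the one or two $R^{\dagger}$-edges coming from $xx'$; these intermediate bags can all be taken of size $\le 3$, the worst case (both $x,x'\notin W$) being handled by the bags $\{x^{A},x^{B},x'^{A}\}$, $\{x^{B},x'^{A},x'^{B}\}$, $\beta(x')$ inserted between $\beta(x)$ and $\beta(x')$. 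A routine check that every vertex of $R^{\dagger}$ occupies a connected set of bags shows this is a valid tree decomposition of width $2$, which finishes the proof. The step I expect to be the main obstacle is Step~3 — isolating exactly the right consequence of the metric compatibility, namely the existence of the common underlying tree $R$ with a consistent placement of $W$ (this is what excludes the subdivided-$K_4$ behaviour left open by Step~1); once $R^{\dagger}$ is in hand, Step~4 is an explicit if slightly tedious bookkeeping of bags, and Steps~1--2 are routine.
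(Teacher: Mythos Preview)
Your proof is correct and shares the paper's central insight --- both arguments ultimately rest on the uniqueness of the minimal tree realizing the metric $d_G|_{W}$ on $W=V(T_1)\cap V(T_2)$ (your Step~3; the paper's Claim ``reconstruction'') --- but the route to that point is genuinely different. The paper first reduces to the $2$-connected case, then picks a leaf $u$ of $T_1$ and runs a BFS-style argument (every edge is vertical with respect to $u$, plus a cutvertex claim) to force $W$ to be exactly the set of leaves of both trees, arriving at what it calls a \emph{mirror-decomposition}; only then is metric uniqueness invoked. You bypass the $2$-connectedness reduction and the entire verticality/cutvertex machinery: your Step~1 (each $T_i$ is induced, no $A$--$B$ edges) and Step~2 (trimming to the Steiner subtrees $\tau_i$) let you feed $W$ directly into the metric-uniqueness step without first showing $W$ coincides with the leaf sets. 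This is more streamlined; the only cost is that you quote the uniqueness of tree-metric realizations as a black box, whereas the paper reproves it from scratch via medians. One small wrinkle worth noting in Step~3: when an edge $uv$ of $R$ has both endpoints in $W$ and weight~$1$, the two ``copies'' $\sigma_A(u)\sigma_A(v)$ and $\sigma_B(u)\sigma_B(v)$ are the same $G$-edge rather than parallel edges, so $G_0$ is a subdivision of the simplification of $R^{\dagger}$ --- harmless for the treewidth bound, but you should say so explicitly.
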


\paragraph*{Algorithmic motivations.}
\emph{Routing problems} play a central role not only in algorithmic graph theory and complexity, but also in structural graph theory. One of the best examples is the \textsc{Disjoint Paths} problem, appearing in the seminal Graph Minor series of papers of Robertson and Seymour~\cite{RSXIII}, in which one is given as an input a graph $G$, an integer $k$ and $k$ pairs of vertices, and one must decide whether there exists $k$ pairwise disjoint paths connecting all pairs of vertices. This problem has been shown to be solvable in FPT time, when parameterized by the number $k$ of shortest paths~\cite{RSXIII, KR12}, and this result has been a key ingredient to design efficient algorithms deciding the existence of \emph{minors} in graphs. 
The metric variant \textsc{Disjoint Shortest Path} of this problem~\cite{Eilam98} (in which one further wants to decide whether there exist $k$ pairwise disjoint shortest paths connecting all input pairs of vertices) has attracted significant attention, and was recently proved to be solvable in XP time, when parameterized by $k$~\cite{BK17,DSPpoly,DSPpolygeo}.\footnote{It is worth mentioning that all the aforementioned problems are NP-hard~\cite{FHW80, DSPpoly}.} 

A number of metric variants of routing problems of similar flavor has been studied over the last decades. Among them, \textsc{Isometric Path Cover}~\cite{AF84, IPC} takes as input a graph $G$ and an integer $k$, and asks whether $G$ is vertex-coverable by $k$ shortest paths. Its routing version \textsc{Isometric Path Cover With Terminals} takes as input a graph $G$, some integer $k$, and $k$ pairs of vertices, and asks whether there exist $k$ shortest paths that vertex-cover $G$ and connect all the input pairs of vertices. Both problems are NP-complete~\cite{CDDFGG, DFPT24}. By combining \Cref{thm: intro-expo} with Courcelle's theorem~\cite{Courcelle}, it was shown~\cite{DFPT24} that there exists algorithms running in XP (respectively FPT time) when parameterized by $k$, to solve \textsc{Isometric Path Cover} (respectively \textsc{Isometric Path Cover With Terminals}). In particular, it is still open~\cite{DFPT24} whether the \textsc{Isometric Path Cover} problem admits an FPT algorithm parameterized by $k$. We also refer to Chakraborty et al.~\cite{CDFMT} for some recent work on similar metric versions of routing problems. It is worth mentioning that \textsc{Isometric Path Cover} 
admits some polynomial time $c$-approximation algorithms
on graphs with \emph{isometric path complexity} at most $c$~\cite{CCFV}, a recent metric parameter of graphs~\cite{CDDFGG} (introduced under the name of \emph{isometric path antichain cover number}).

As mentioned above, the original parameterized algorithms from~\cite{DFPT24} solving \textsc{Isometric Path Cover} and its variant with terminals rely on Courcelle's theorem, and thus the dependencies in $k$ are highly non-trivial and do not give rise in general to practical algorithms. Chakraborty et al.~\cite[Section 3]{CDFMT} designed a dynamic programming procedure solving the partition variant \textsc{Isometric Path Partition} 
of \textsc{Isometric Path Cover} in time $n^{O(\tw(G))}$, where $\tw(G)$ denotes the treewidth of the input graph. 
As pointed to us by Florent Foucaud (personal communication), it is very likely that combining our bound from \Cref{thm: main} with such an approach could lead to better running times than the ones given by Courcelle's theorem for algorithms solving \textsc{Isometric Path Edge-Cover} and its version with terminals (the natural edge-covering variant of \textsc{Isometric Path Cover}).
 
\paragraph*{Organization of the paper.}
\Cref{sec: Prelis} contains all preliminary definitions and some basic results related to pathwidth. Sections \ref{sec: poly}, \ref{sec: 3paths}, \ref{sec: trees}, and \ref{sec: lb} contain respectively proofs of Theorems \ref{thm: main}, \ref{prop:low_bound}, \ref{thm: 3-paths}, and \ref{thm: 2-trees}. Note that these sections are completely independent of each other, and can be read in any order. 
\Cref{sec: ccl} contains some discussion and additional questions left open by this work.

\section{Preliminaries}
\label{sec: Prelis}
\subsection{Notations and Basic Definitions}

\paragraph*{Graphs.}
In this work, we consider undirected finite simple graphs, without loops. For a graph $G$, we denote by $V(G)$ its set of \emph{vertices}, and $E(G)$ its set of \emph{edges}.
For any set $X \subseteq V(G)$ of vertices, $G[X]$ denotes the subgraph of $G$ induced by $X$, that is, the graph with vertex set $X$, and containing all edges of $G$ with both endpoints in $X$. $G-X$ denotes the subgraph of $G$  induced by $V(G) \setminus X$. For simplicity, for every vertex $x$, we let $G-x$ denote the graph $G-\sg{x}$.
For every subsets $X,Y\subseteq V(G)$ of vertices, we denote $E_G[X,Y]:=\sg{xy\in E(G): x\in X\setminus Y, y\in Y\setminus X}$. Let $G'$ be a graph, whose vertex set potentially intersects the one of $G$. The union of $G$ and $G'$ is the graph $G\cup G':=(V(G)\cup V(G'), E(G)\cup E(G'))$. 

For every $v\in V(G)$, we let $N_G(v)$ (or simply $N(v)$ when the context is clear) denote the set of neighbors of $v$ in $G$. Similarly, for every subset $X\subseteq V(G)$ of vertices, we let $N_G(X)$ (or simply $N(X)$) denote the set of vertices in $V(G)\setminus X$ having at least one neighbor in $X$. If $U\subseteq V(G)$, we similarly let $N_{G,U}(X)$ denote the set of vertices in $U\setminus X$ having at least one neighbor in $X$. When the context is clear, we will simply write $N_U(X)$. 

For every three subsets $X, A, B\subseteq V(G)$, we say that $X$ \emph{separates} $A$ from $B$ in $G$ if no component of $G-X$ contains both a vertex from $A$ and $B$.
 
\paragraph*{Paths.}
A path $P$ is a sequence of pairwise distinct vertices $v_1, \ldots, v_k$ such that for each $i<k$, $v_iv_{i+1}\in E(G)$. Note that by definition, a path is a sequence of vertices; however, by abuse of notations, we will often identify a path $P=v_1,\ldots, v_k$ of $G$ and its corresponding subgraph in $G$ (that is, the graph with vertices $v_1, \ldots, v_k$ and edges $v_iv_{i+1}$ for each $i$ such that $1 \leq i < k$). 
A path connecting two vertices $v_1=x$ and $v_k=y$ will also be called an \emph{$xy$-path}. We call the vertices $v_1$ and $v_k$ the \emph{endvertices} of $P$, while all its other vertices are its \emph{internal vertices}.
The \emph{length} of $P$ corresponds to its number of edges.
For each $i\leq j$, we let $P[v_i, v_j]$ denote the subpath $v_i, v_{i+1}, \ldots, v_j$ of $P$, and we let $P^{-1}$ denote the path $v_k,\ldots,v_1$. 
For every two paths $P=v_1,\ldots, v_p$ and $Q=w_1,\ldots, w_q$ such that $v_p=w_1$, and such that $P-v_p$ is disjoint from $Q-w_1$, we define the path $P\cdot Q:= v_1,\ldots, v_p=w_1, \ldots, w_q$. 
If $P=v_1,\ldots,v_k$ is a path, for every $1\leq i<j\leq k$, we say that $v_i$ is \emph{before} $v_j$ on $P$, and that $v_j$ is \emph{after} $v_i$ on $P$. 
A path $P$ is \emph{vertical} with respect to a vertex $u$, if for every $i\in \mathbb N$, $P$ has at most one vertex at distance $i$ from $u$ in $G$. Note that every path vertical with respect to some vertex $u$ is a shortest path, and that every shortest path starting from $u$ is vertical with respect to $u$.

\paragraph*{Covers.} Let $G$ be a graph and $H_1, \ldots, H_k$ be subgraphs of $G$. We say that the graphs $H_1, \ldots, H_k$ \emph{edge-cover} $G$ if $E(G)=\bigcup_{i=1}^kE(H_i)$. Similarly, we say that $H_1,\ldots, H_k$ \emph{vertex-cover} $G$ if $V(G)=\bigcup_{i=1}^kV(H_i)$. Note that whenever $G$ does not contain an isolated vertex, then every edge-cover is also a vertex-cover.

\paragraph*{Metrics.} We let $\dist_G(\cdot, \cdot)$ (or simply $d(\cdot, \cdot)$ when the context is clear) denote the shortest-path metric of a graph $G$. Recall that a graph $H$ is an \emph{isometric subgraph} of $G$ if $H$ is a subgraph of $G$ such that for every two vertices $x,y\in V(H)$, we have $d_H(x,y)=d_G(x,y)$.
For every vertex $v$ of a graph $G$, we let $\ecc(v):=\max\sg{d_G(u,v): u\in V(G)}\in \mathbb N\cup \sg{\infty}$ denote the \emph{eccentricity} of $v$.

\paragraph*{Orders.} Let $(X,\leq)$ be a partially ordered set. For every two disjoint subsets $A, B\subseteq X$, we write $A< B$ if $a<b$ for every $(a,b)\in A\times B$. Moreover, for every $x\in X$ and every $A\subseteq X$, we let $A|_{\leq x}:=\sg{a\in A: a\leq x}$ and $A|_{\geq x}:=\sg{a\in A: a\geq x}$. We define similarly $A|_{<x}$ and $A|_{>x}$. 
A \emph{chain} is a sequence $(x_1, \dots, x_k)$ of distinct elements of $X$ such that $x_1 \leq x_2 \leq \dots \leq x_k$.

\subsection{Path decompositions}
We collect in this section a number of basic properties of pathwidth.

\paragraph*{Path decomposition.}A path decomposition of a graph $G$ is a sequence $\mathcal{P}  = (X_1, X_2, \ldots , X_q)$ of vertex subsets of $G$, called bags, such that $V(G)=\bigcup_{i=1}^q X_i$,
for every edge $\lbrace x, y \rbrace \in E$ there is at least one bag containing both endpoints, and for every vertex $x \in V$, the bags containing $x$ form a continuous subsequence of $\mathcal{P}$. The width of $\mathcal{P}$ is $\max \lbrace \abs{X_i} -1 :1 \leq  i \leq  q \rbrace$, and the \emph{pathwidth} $\pw(G)$ of $G$ is the smallest width of a decomposition, among all path decompositions of $G$. The bags $X_1, X_q$ are called the \emph{extremities} of $\mathcal P$. We say that $X_1$ is the \emph{leftmost bag}, and that $X_q$ is the \emph{rightmost bag}.

\begin{remark}
\label{rem: X}
    For every graph $G$ and $X \subseteq V(G)$, $\pw(G) \leq \abs{X} + \pw(G - X)$.
\end{remark}

\paragraph*{Vertex-separation number.} Let $G$ be a graph and $<$ denote a total ordering on $V(G)$. The vertex-separation number of $(G,<)$ is the value $\min_{w\in V(G)}|\sg{u \in V(G): u<w ~\text{and}~\exists v\geq w, uv\in E(G)}|$. The vertex separation number of $G$ is the minimum over the vertex separation numbers of $(G,<)$, for all possible linear orderings $<$ of $G$.

\begin{theorem}[\cite{Kinnersley}]
\label{thm: Kinnersley}
    The vertex separation number of a graph equals its pathwidth.  
\end{theorem}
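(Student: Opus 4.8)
The plan is to establish separately the two inequalities $\pw(G)\le \mathrm{vs}(G)$ and $\mathrm{vs}(G)\le \pw(G)$, where $\mathrm{vs}(G)$ is the vertex separation number. It is convenient to fix, for a total order $v_1<v_2<\dots<v_n$ on $V(G)$ and an index $0\le i\le n$, the ``left boundary'' set $L_i:=\sg{v_\ell : \ell\le i \text{ and } v_\ell v_m\in E(G)\text{ for some }m>i}$; this is exactly the set appearing in the definition of the vertex separation number, evaluated at $w=v_{i+1}$ (and $L_0=L_n=\emptyset$). Thus the vertex separation number of $(G,<)$ equals $\max_{0\le i\le n}\abs{L_i}$, and $\mathrm{vs}(G)=\min_{<}\max_{i}\abs{L_i}$ over all total orders of $V(G)$.

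For $\pw(G)\le \mathrm{vs}(G)$, I would take a total order $v_1<\dots<v_n$ realizing $\mathrm{vs}(G)=:s$ and define bags $X_i:=\sg{v_i}\cup L_{i-1}$ for $1\le i\le n$. Since $v_i\notin L_{i-1}$, each bag has size at most $s+1$. It remains to check that $\cP:=(X_1,\dots,X_n)$ is a path-decomposition: every $v_i$ lies in $X_i$; for an edge $v_iv_j$ with $i<j$, both endpoints lie in $X_j$ because $v_j\in X_j$ and $v_i\in L_{j-1}$ (as $i\le j-1$ and $v_i$ has the neighbour $v_j$ at position $j>j-1$); and for connectivity, a short argument shows that the bags containing a fixed vertex $v_\ell$ are precisely $X_\ell,\dots,X_p$, where $p\ge\ell$ is the largest index such that $v_\ell$ has a neighbour at position $\ge p$ (with $p=\ell$ if $v_\ell$ has no later neighbour). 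The only point needing care is that $v_\ell\notin X_i$ for $i<\ell$, which holds since $L_{i-1}\subseteq\sg{v_1,\dots,v_{i-1}}$. This gives $\pw(G)\le s=\mathrm{vs}(G)$.

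For $\mathrm{vs}(G)\le \pw(G)$, set $w:=\pw(G)$ and first normalise a width-$w$ path-decomposition into an ``introduce/forget'' one: prepending and appending chains of bags from $\emptyset$, and between two consecutive bags first deleting the departing vertices one at a time and then inserting the entering vertices one at a time, yields a path-decomposition $(Y_1,\dots,Y_q)$ of width $w$ with $Y_1=\emptyset$ in which each step either adds exactly one new vertex or deletes exactly one vertex, every vertex being added exactly once. Order $V(G)$ as $v_1<\dots<v_n$ by increasing time of addition. Fix $0\le i<n$ and let $b\ge 2$ be the step at which $v_{i+1}$ is added; then $Y_b=Y_{b-1}\cup\sg{v_{i+1}}$, hence $\abs{Y_{b-1}}=\abs{Y_b}-1\le w$. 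I then claim $L_i\subseteq Y_{b-1}$: if $v_\ell\in L_i$ has a neighbour $v_m$ with $m>i$, then $v_m$ is added at a step $\ge b$, so any bag $Y_c$ containing the edge $v_\ell v_m$ has $c\ge b$; on the other hand $v_\ell$ is added at a step $\le b-1$ since $\ell\le i$; as the bags containing $v_\ell$ form an interval containing both its addition step and $c$, that interval contains $b-1$, so $v_\ell\in Y_{b-1}$. Thus $\abs{L_i}\le w$ for every $i$, whence $\mathrm{vs}(G)\le w=\pw(G)$. Combined with the previous paragraph, $\mathrm{vs}(G)=\pw(G)$.

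The main thing to watch is the off-by-one in the second inequality: an arbitrary bag of an optimal path-decomposition has $\pw(G)+1$ vertices, so $\abs{L_i}$ cannot be bounded by the size of just any bag that happens to contain $L_i$. Routing through an introduce/forget decomposition fixes this precisely because the bag $Y_{b-1}$ sitting immediately before an addition has size at most $\pw(G)$, and $L_i$ turns out to lie in exactly such a bag. The remaining verifications — that the inserted bags in the normalisation keep every vertex's occurrences an interval and keep all edges covered, and the connectivity check for $\cP$ above — are routine.
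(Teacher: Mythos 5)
Your proof is correct: both directions are the standard argument (bags $\sg{v_i}\cup L_{i-1}$ for one inequality, a nice introduce/forget decomposition ordered by introduction time for the other, with the off-by-one handled exactly where it needs to be), and this is essentially the classical proof of the result the paper imports from Kinnersley as a black box without proving it. Note only that you have (rightly) read the paper's definition of vertex separation number with a $\max$ over $w$ rather than the $\min$ written there, which is a typo in the paper, as its use in the proof of \Cref{lem: k-layered} confirms.
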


\paragraph*{Layerings.} A \emph{layering} of a graph $G$ is a partition of its vertices into a sequence $(V_1, \ldots, V_t)$ of subsets of $V(G)$ called \emph{layers}, such that each graph $G[V_i]$ is an independent set, and for every $i,j\in \sg{1, \ldots, t}$ such that $|j-i|\geq 2$, we have $E_G(V_i, V_j) = \emptyset$.\footnote{Note that our definition of layering differs from the one which is commonly used in the literature, as one usually allows edges between vertices from a same layer.}

A \emph{$k$-layering} of $G$ is a layering $(V_1, \ldots, V_t)$ such that for every $0 < i<t$, we have $\abs{E_G(V_i, V_{i+1})}\leq k$. Note that in particular, it implies that $\abs{N_{V_{i+1}}(V_i)} \leq k$ and $\abs{N_{V_{i}}(V_{i+1})} \leq k$. The bags $V_1$ and $V_t$ are the \emph{extremities} of the layering. 

\begin{lemma}
\label{lem: k-layered}
    Let $G$ be a graph admitting a $k$-layering. Then $G$ has pathwidth at most $k$. Moreover, if $G$ is connected, then $G$ has a path decomposition of width at most $k$, whose extremities are exactly the extremities of the $k$-layering.
\end{lemma}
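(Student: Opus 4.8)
The plan is to derive both statements from a single, carefully chosen linear order on $V(G)$. Fix a $k$-layering $(V_1,\dots,V_t)$ of $G$ and write $F_i:=E_G(V_i,V_{i+1})$, $L_i:=N_{V_i}(V_{i-1})$ and $R_i:=N_{V_i}(V_{i+1})$, so that $|F_i|\le k$ and hence $|L_i|,|R_i|\le k$. Let $\prec$ be the total order that lists the layers as $V_1\prec V_2\prec\cdots\prec V_t$ and, inside each $V_i$, lists first the block $V_i\setminus R_i$, then $R_i\cap L_i$, then $R_i\setminus L_i$ (with an arbitrary order inside each block). I claim that for every $w\in V(G)$ the set $S_w:=\{u\prec w:\ uv\in E(G)\text{ for some }v\succeq w\}$ has size at most $k$; by \Cref{thm: Kinnersley} (vertex separation number equals pathwidth) this already yields $\pw(G)\le k$. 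To prove the claim, fix $w\in V_i$. Since each $V_j$ is an independent set and only consecutive layers meet, $S_w=A\sqcup B$ where $A\subseteq V_{i-1}$ consists of neighbours of $\{x\in V_i:x\succeq w\}$ and $B\subseteq\{x\in V_i:x\prec w\}\cap R_i$. If $w\in V_i\setminus R_i$, then $B=\emptyset$ and $A\subseteq N_{V_{i-1}}(V_i)$, so $|S_w|\le|F_{i-1}|\le k$. If $w\in R_i\setminus L_i$, then $A=\emptyset$ (vertices of $R_i\setminus L_i$ have no neighbour in $V_{i-1}$) and $B\subseteq R_i\setminus\{w\}$, so $|S_w|<|R_i|\le k$. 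If $w\in Q:=R_i\cap L_i$, then the block structure gives $B=\{x\in Q:x\prec w\}$ and $A=N_{V_{i-1}}(\{x\in Q:x\succeq w\})$, whence
$$|S_w|\ \le\ |B|+\sum_{x\in Q,\ x\succeq w}|N_{V_{i-1}}(x)|\ \le\ |B|+\big(|F_{i-1}|-|B|\big)\ \le\ k,$$
using that each $x\in Q\subseteq L_i$ has at least one neighbour in $V_{i-1}$ and that $\sum_{x\in Q}|N_{V_{i-1}}(x)|=|E_G(V_{i-1},Q)|\le|F_{i-1}|$.

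For the ``moreover'' assume $G$ is connected; we may assume $|V(G)|\ge2$, so $G$ has no isolated vertex, which forces $V_1=R_1$, $V_t=L_t$, and in particular $|V_1|,|V_t|\le k$ (and $t\ge 2$). Consider the standard path-decomposition $\mathcal D=(B_1,\dots,B_n)$ attached to $\prec=(v_1\prec\cdots\prec v_n)$, namely $B_j:=\{v_j\}\cup\{v_\ell:\ell<j,\ v_\ell v_m\in E(G)\text{ for some }m\ge j\}$; this is a valid path-decomposition, and by the first paragraph $|B_j|\le k+1$ for every $j$. Now patch $\mathcal D$ at its left end: with $s:=|V_1|$, the equality $V_1=R_1$ forces $B_j=\{v_1,\dots,v_j\}$ for all $j\le s$, so $B_1\subsetneq\cdots\subsetneq B_s=V_1$, and moreover every vertex of $V_1$ lies in $B_{s+1}$ as well. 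Hence replacing $B_1,\dots,B_s$ by the single bag $V_1$ keeps $\mathcal D$ valid (no occurrence interval is broken) and does not increase the width, and the new leftmost bag is $V_1$.

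The right end is the delicate part. The last $t':=|V_t|$ bags of $\mathcal D$ are $B_{n-t'+r}=\{v_{n-t'+r}\}\cup N_{V_{t-1}}(\{v_{n-t'+r},\dots,v_n\})$ for $r=1,\dots,t'$, and these do \emph{not} form an increasing chain towards $V_t$: a vertex of $V_t$ is dropped from the bags as soon as the later vertices of $V_t$ get introduced. So I discard $B_{n-t'+1},\dots,B_n$ and, after $B_{n-t'}$, append a ``reverse sweep'' of the last layer:
$$\tilde B_r:=\{v_{n-t'+1},\dots,v_{n-t'+r}\}\cup N_{V_{t-1}}(\{v_{n-t'+r},\dots,v_n\})\qquad(r=1,\dots,t'),$$
followed by $\tilde B_{t'+1}:=V_t$. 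Here every vertex of $V_t$ is introduced once and kept until the end, and each $w\in V_{t-1}$ is retained exactly until its last neighbour in $V_t$ appears. Three checks are needed. Width: since $V_t=L_t$, every $v_{n-t'+\ell}$ has $\ge1$ neighbour in $V_{t-1}$ and $\sum_{\ell}|N_{V_{t-1}}(v_{n-t'+\ell})|=|F_{t-1}|\le k$, so $|\tilde B_r|\le r+\sum_{\ell\ge r}|N_{V_{t-1}}(v_{n-t'+\ell})|\le r+\big(k-(r-1)\big)=k+1$. Edge coverage: every edge of $F_{t-1}$ joining $v_{n-t'+\ell}$ to some $w\in V_{t-1}$ lies in $\tilde B_\ell$, and all other edges of $G$ were already covered among $B_1,\dots,B_{n-t'}$. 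Contiguity at the junction: the vertices dropped when passing from $B_{n-t'}$ to $\tilde B_1$ have all their incident edges already covered and never reappear, and each $w\in R_{t-1}$ occurs precisely in a run of $B_j$'s ending at $B_{n-t'}$, immediately continued by the $\tilde B_r$'s that contain it. The resulting sequence $(V_1,B_{s+1},\dots,B_{n-t'},\tilde B_1,\dots,\tilde B_{t'+1})$ (where the middle range is empty when $t=2$) is a path-decomposition of $G$ of width $\le k$ whose leftmost bag is $V_1$ and whose rightmost bag is $V_t$.

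The only genuinely delicate point is this last step: one must simultaneously keep every $\tilde B_r$ of size $\le k+1$, cover all edges between $V_{t-1}$ and $V_t$, and — most easily overlooked — verify that no vertex of $V_{t-1}$ has its occurrence interval split when the original tail is swapped for the reverse sweep. Everything else (the vertex-separation bound of the first paragraph and the left patch) is a short, routine verification.
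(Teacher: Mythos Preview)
Your proof is correct and follows essentially the same approach as the paper: the same linear order on $V(G)$ (layer by layer, with $L_i\setminus R_i$, $L_i\cap R_i$, $R_i\setminus L_i$ inside each layer) and the same appeal to \Cref{thm: Kinnersley} via a case analysis on which block $w$ sits in. For the ``moreover'' part the paper simply asserts that the decomposition coming from the vertex-separation order ``can then be slightly modified'' to have extremities $V_1$ and $V_t$; you actually carry this out in full, and your reverse-sweep construction for the right end (together with the contiguity check for $R_{t-1}$ at the junction) is a clean way to make that claim precise.
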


\begin{proof}
We let $(V_1, \ldots, V_t)$ denote a $k$-layering of $G$, and assume without loss of generality that $G$ has no isolated vertex.
We also set $V_0:=V_{t+1}:=\emptyset$.
For every $i\in \sg{1, \ldots, t}$, we set $V_i^{\ell}:=N_{V_{i}}(V_{i-1})$ and $V_i^{r}:=N_{V_i}(V_{i+1})$. Note in particular that, as $G$ has no isolated vertex, for each $i\in \sg{1,\ldots, t}$ we have $V_i=V_i^{\ell}\cup V_i^r$, $|V_i^{\ell}|\leq k$, $|V_i^r|\leq k$, and $V_1^{\ell}=V_t^{r}=\emptyset$, and that for each $i<t$, all the edges of $G$ between $V_i$ and $V_{i+1}$ are between 
$V_i^{r}$ and $V_{i+1}^{\ell}$. 

We will define a total order $\leq$ on $V(G)$ such that $(G,\leq)$ has vertex separation number at most $k$. In particular, \Cref{thm: Kinnersley} will then immediately imply that $\pw(G)\leq k$. We let $\leq$ be any total ordering on $V(G)$ satisfying the following two properties: 
\begin{itemize}
 \item for every $i<j$, we have $V_i<V_j$;
 \item for every $i\in \sg{1,\ldots, t}$, we have $V^{\ell}_i\setminus V^{r}_i<V^{\ell}_i\cap V^{r}_i<V^{r}_i\setminus V^{\ell}_i$.
\end{itemize}
We now show that $(G,\leq)$ has vertex separation number at most $k$ and let $w\in V(G)$. We set $S_w:=\sg{u: u<w \text{ and }\exists v\geq w: uv\in E(G)}$. Our goal is to show that $|S_w|\leq k$. We let $i$ denote the unique element from $\sg{1,\ldots, t}$ such that $w\in V_i$. 
We define the sets $A:=V^{r}_i|_{<w}, B:=V^{\ell}_i|_{\geq w}$ and $C:=V^{r}_{i-1}$. Note that $A,B,C$ are pairwise disjoint, and that moreover, as $(V_1,\ldots, V_t)$ is a layering of $G$, $A\subseteq S_w\subseteq A\uplus C$.

First, observe that if $B$ is empty, then we must have $S_w\cap C=\emptyset$, so that $S_w=A$, and thus as every vertex of $A$ is by definition of $V^{r}_i$ adjacent to a vertex of $V_{i+1}$, and as $|E_G(V_i,V_{i+1})|\leq k$, we then have $|S_w|\leq k$. Assume now that $B$ is not empty. Then there exists an element $v\geq w$ such that $v\in V^{\ell}_i$. Thus, by definition of $<$, we have $V_i|_{\leq w}\subseteq V^{\ell}_i$, and thus $A\subseteq V^{\ell}_i$.

We now set $k_1:=|E_G(C,A)|$ and $k_2:=|E_G(C,B)|$. As $A\subseteq V^{\ell}_i$, every element of $A$ must be incident to an edge from $E_G(A,C)$, hence $|A|\leq k_1$. As $C=V_{i-1}^r$, we have $|C|\leq k_2$.
Moreover, note as $(V_1, \ldots, V_t)$ is a $k$-layering and as $A$ and $B$ are disjoint, we have $k_1+k_2\leq k$. In particular, it implies that 
$$|S_w|\leq |A|+|C|\leq k_1+k_2\leq k,$$
hence we can conclude that $(G,\leq)$ has indeed vertex separation number at most $k$.

To prove the ``Moreover'' part of the lemma, observe that if $G$ is connected, then both $V_1$ and $V_t$ have size at most $k$. We claim that it is not hard to check that the path decomposition given by \Cref{thm: Kinnersley} can then be slightly modified so that its extremities are $V_1$ and $V_k$. 
\end{proof}

\section{A General Polynomial Bound}
\label{sec: poly}
In this section, we prove \Cref{thm: main}, which we restate here.  

\mainPoly*

Throughout this section, we let $G$ be a graph whose edge set can be covered by a set $\cP:=\sg{P_1, \ldots, P_k}$ of $k$ isometric paths. Our main result is the following, which will immediately imply a proof of \Cref{thm: main}.

\begin{theorem}\label{thm: main-poly}
For every $i \in \sg{1,\ldots,k}$, there exists a set of vertices $X_i \subseteq V$ of size at most $720k^3+4k$ such that every connected component of $G-X_i$ that intersects $P_i$ has pathwidth at most $6k$.
\end{theorem}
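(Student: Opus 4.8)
The plan is to build, for each index $i$, the separator $X_i$ by combining a bounded number of "anchor" vertices coming from the interaction between $P_i$ and the other paths $P_j$, together with their BFS-layers at bounded distance. First I would set $u$ to be an endvertex of $P_i$, so that $P_i$ is vertical with respect to $u$, and consider the BFS-layering $(L_0, L_1, \dots)$ of $G$ from $u$ (where $L_t = \{v : d_G(u,v)=t\}$). The key difficulty the exponential proof of \cite{DFPT24} ran into is that a single layer $L_t$ can be huge (think of the star example). So instead of taking whole layers, the strategy is: for each $j\neq i$, the path $P_j$ is a shortest path, so it meets each layer $L_t$ in at most one vertex; crucially, $P_j$ can only "change direction" with respect to the $u$-layering a bounded number of times — in fact a shortest path $P_j$ visits the layers in a pattern that is a concatenation of at most two monotone pieces (it goes toward $u$, reaches a closest-to-$u$ vertex, then goes away), so the "turning structure" of $P_j$ relative to the $u$-layering is controlled.

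Next I would identify, inside a component $C$ of $G-X_i$ that meets $P_i$, the edges not covered by $P_i$: every such edge lies on some $P_j$ with $j\neq i$. The idea is to cut each $P_j$ at its highest point (closest to $u$) and at the points where it enters/leaves the vicinity of $P_i$, putting all these $O(k)$ special vertices of each $P_j$, together with a ball of radius $O(k)$ around each of them (which has size $O(k^2)$ because each such ball is itself edge-covered by $k$ shortest paths, and in a graph edge-covered by $k$ shortest paths the ball of radius $r$ around a vertex has $O(k\cdot r)$ vertices — this is the local linear growth one gets once the "star obstruction" has been removed by the separator), into $X_i$. Summing over the $O(k)$ paths $P_j$ and the $O(k)$ special vertices per path and the $O(k\cdot k)$-sized balls gives the stated bound $O(k^3)$; the explicit constant $720k^3+4k$ should fall out of careful bookkeeping of how many special vertices per path and how large the balls need to be. After removing $X_i$, in each remaining component $C$ meeting $P_i$, every $P_j$ restricted to $C$ is now a shortest path whose layer-pattern (with respect to the $u$-layering) is monotone, i.e. $P_j\cap C$ is vertical with respect to $u$, and $C$ has bounded interaction width across consecutive $u$-layers.

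Then I would argue that this controlled situation yields a $6k$-layering of $C$: intersecting the $u$-layering with $C$ gives layers $V_t := L_t\cap C$, and I need $|E_G(V_t, V_{t+1})|\le 6k$. Each such edge is covered by some $P_j$; since each $P_j\cap C$ is now vertical with respect to $u$, it contributes at most one edge between $V_t$ and $V_{t+1}$, except that $P_j$ might re-enter $C$ several times — but the number of distinct maximal subpaths of $P_j$ inside $C$ is bounded by a constant (because each re-entry corresponds to a piece of $X_i$ the path passes through, and $|X_i\cap P_j|$ is $O(1)$ per path after our construction), giving $O(1)$ edges between consecutive layers per path, hence $O(k)$ total; tuning the radius of the balls in $X_i$ pins this down to exactly $6k$. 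Then \Cref{lem: k-layered} converts the $6k$-layering into a path-decomposition of width $6k$, finishing the proof; and \Cref{rem: X} combined with $\pw(C)\le 6k$ for every component $C$ of $G-X_i$ meeting $P_i$ gives \Cref{thm: main} once we observe that every component of $G$ meets some $P_i$ and we can union the $X_i$'s appropriately (this last bookkeeping, turning the per-$i$ statement into the global $O(k^4)$ bound, is routine: one processes the paths one at a time, at each step paying $|X_i|=O(k^3)$ to peel off the components touching $P_i$).

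The main obstacle I expect is the second step: precisely controlling how a shortest path $P_j$ can interleave with a component $C$ of $G-X_i$ — i.e. bounding the number of maximal subpaths of $P_j$ lying in $C$ and ensuring each such subpath is vertical with respect to $u$ — and simultaneously bounding the sizes of the balls we throw into $X_i$. This requires the (non-obvious but true) fact that in a graph edge-covered by $k$ shortest paths, balls grow linearly once we are "away from" the bad high-degree vertices, and it requires choosing the cut points on the $P_j$'s so that after removal the remaining pieces behave monotonically with respect to the $u$-layering. Everything else — the layering-to-pathwidth conversion and the final summation — is bookkeeping on top of \Cref{lem: k-layered} and \Cref{rem: X}.
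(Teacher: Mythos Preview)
Your plan rests on a structural claim that is false: you assert that a shortest path $P_j$ ``visits the layers in a pattern that is a concatenation of at most two monotone pieces (it goes toward $u$, reaches a closest-to-$u$ vertex, then goes away).'' This unimodality is not true for shortest paths in general. Concretely, take vertices $u,u_1,a,b,c,d$ with edges $uu_1,\,u_1b,\,u_1d,\,ab,\,bc,\,cd$. Then $d_G(u,a)=d_G(u,c)=3$ and $d_G(u,b)=d_G(u,d)=2$, and $P_3=a,b,c,d$ is a shortest $ad$-path whose layer sequence is $3,2,3,2$. Moreover this graph is edge-covered by the three shortest paths $P_1=u,u_1,b,a$, $P_2=u,u_1,d,c$, $P_3=a,b,c,d$, with $P_1$ starting at $u$, so the failure occurs exactly in the setting you need. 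Since a shortest path can oscillate arbitrarily many times between layers, your cut-at-the-turning-point-plus-balls scheme does not produce a bounded separator, and after removal there is no reason the remaining pieces of $P_j$ are vertical with respect to $u$.

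The paper's argument is structurally quite different. Rather than trying to force every $P_j$ to become vertical with respect to a BFS from an endpoint of $P_i$, it works with the notion of being \emph{parallel to $P_1$} (a subpath of some shortest $a_1b_1$-path). For each $P_j$ that intersects $P_1$ it extracts a maximal ``middle'' subpath $\widetilde C_{P_j}$ built from at most three parallel pieces; the union $G_0$ of these middles is covered by at most $6k$ parallel paths and hence is $6k$-layered (this is where the $6k$ comes from). The separator $X$ is then built not from metric balls but by, for each ``bad'' path $A$ (a non-parallel tail of some $P_j$, or a path disjoint from $P_1$) and each parallel path $P$ in the cover of $G_0$, finding a set $X_{A,P}$ of size $O(k)$ that separates $V(A)\cap V(P)$ from $P_1$ inside $G_0$. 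This last step (the Key Lemma) is the technical heart and uses a delicate monotone sweep through the parallel paths, exploiting the absence of certain ``$\nearrow$'' or ``$\searrow$'' shortcuts; the reduction step (making the family \emph{reduced}) is precisely what guarantees these shortcuts do not exist for paths in $\cP_2$. The $720k^3$ arises as (number of pairs $(A,P)$)$\times 30k$, not from any ball-size estimate.
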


\begin{proof}[Proof that \Cref{thm: main-poly} implies \Cref{thm: main}]
For every $1\leq i\leq k$, we use \Cref{thm: main-poly} to obtain the set $X_i$. We consider their union $X:=\bigcup_{i=1}^kX_i$. Observe that $\abs{X} \leq \sum_{i\in [k]} \abs{X_i} \leq k \cdot (720 k^3 + 4k) = 720k^4 + 4k^2$.
We now let $C$ be a connected component of $G - X$. As $P_1, \ldots, P_k$ edge-cover (and thus also vertex-cover) $G$, there is some $i\in \sg{1,\ldots,k}$ such that the path $P_i$ intersects $C$. In particular, as $C$ is included in a connected component of $G-X_i$, it implies that $C$ has pathwidth at most $6k$, and thus that $\pw(G - X) \leq 6k$. 
Using \Cref{rem: X} we conclude that $\pw(G) \leq \abs{X} + \pw(G -X) \leq 720k^4+4k^2+6k$.
\end{proof}

The remainder of the section will consist in a proof of \Cref{thm: main-poly}. In Section \ref{sec: parallel} we prove some preliminary results, and show that our proof reduces to the problem of finding a special kind of separations in a subgraph $G_0$ of $G$ which is $6k$-layered (see \Cref{lem: keyP1}). In Section \ref{sec: key}, we give a proof of \Cref{lem: keyP1}, which is the most technical part of our proof.

Without loss of generality, we will assume from now on that $i=1$, i.e., we will construct a set $X$ of size at most $720k^3+4k$ such that every component of $G-X$ that intersects $P_1$ has pathwidth at most $6k$.

\subsection{Preliminary Results}
\label{sec: parallel}

\paragraph*{Parallel paths.} Let $P$ be a shortest path between two vertices $a,b$ of $G$. A path $Q$ of $G$ is \emph{parallel} to $P$ in $G$ if $Q$ is a subpath of a shortest $ab$-path of $G$. 

Note that this does not define a symmetric relation in general: the fact that a path $Q$ is parallel to another path $P$ does not necessarily imply that $P$ is parallel to $Q$. However, in the remainder of the proof, we will always consider the property of being parallel to the path $P_1$, which is fixed and connects vertices $a_1$ and $b_1$. In particular, the following simple observation implies that every graph which is edge-coverable by a few number of paths that are all parallel to $P_1$ has a very restrained structure. 

 \begin{lemma}
  \label{clm: parallel implies layered}
  Let $\sg{Q_1, \ldots, Q_{\ell}}$ be a collection of paths in $G$ that are all parallel to $P_1$. Then the graph $\bigcup_{j=1}^{\ell}Q_j$ is $\ell$-layered. 
 \end{lemma}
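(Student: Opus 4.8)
The plan is to exhibit the layering explicitly from the metric structure of $G$. Since $P_1$ is a shortest $a_1b_1$-path, put $D := d_G(a_1, b_1)$ and for every $j \in \{0, 1, \ldots, D\}$ define the layer
\[
V_j := \{ v \in \textstyle\bigcup_{i=1}^{\ell} V(Q_i) : d_G(a_1, v) = j \}.
\]
First I would check this is a layering of the graph $H := \bigcup_{j=1}^{\ell} Q_j$. That the $V_j$ partition $V(H)$ is clear. That each $V_j$ is independent and that there are no edges between $V_j$ and $V_{j'}$ with $|j - j'| \geq 2$ both follow from the fact that in any graph, the BFS-layers from a fixed vertex $a_1$ give a layering in exactly this sense: an edge $uv$ can only join vertices whose distances to $a_1$ differ by at most $1$, and $V_j$ independent would require an edge between two vertices at distance exactly $j$, contradicting that (the two distances differ by at most one and cannot be equal across an edge)—wait, that last point needs care, since in a general graph two adjacent vertices *can* be at the same BFS-distance. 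So here is where the hypothesis that each $Q_i$ is *parallel* to $P_1$ must be used, not just that $H$ embeds in a BFS-layering.

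**Using parallelism to bound crossing edges.** The key structural input is: if $Q$ is parallel to $P_1$, then $Q$ is a subpath of some shortest $a_1b_1$-path $R$, and along $R$ the distance to $a_1$ strictly increases by exactly $1$ at each step. Hence $Q$ itself is a shortest path on which $d_G(a_1, \cdot)$ increases by one at each edge; in particular $Q$ is a vertical path with respect to $a_1$ in the sense defined in the preliminaries, so $Q$ meets each layer $V_j$ in at most one vertex, and every edge of $Q$ runs between consecutive layers $V_j, V_{j+1}$. This immediately gives that $V_j$ restricted to the edges of any single $Q_i$ is independent; taking the union over the $\ell$ paths, every edge of $H$ still runs between consecutive layers (so $|j-j'|\ge 2$ contributes nothing and each $V_j$ is independent), confirming $(V_0, \ldots, V_D)$ is a genuine layering. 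Then for the $\ell$-layering bound I would count: an edge of $E_H(V_j, V_{j+1})$ belongs to some $Q_i$, and each $Q_i$ contributes at most one edge between $V_j$ and $V_{j+1}$ (since $Q_i$ has at most one vertex in each of $V_j$ and $V_{j+1}$, and at most one edge joining them). Therefore $|E_H(V_j, V_{j+1})| \leq \ell$ for every $j$, which is exactly the definition of an $\ell$-layering.

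**Main obstacle.** The only genuinely delicate point is the one flagged above: in an arbitrary graph a BFS-layering may have edges inside a layer, so the claimed partition is not automatically a layering in the paper's (stricter) sense. The whole argument hinges on upgrading "$Q_i$ lies in a BFS-layering" to "$Q_i$ is vertical with respect to $a_1$", which is precisely what parallelism buys us, and on then observing that a union of at most $\ell$ vertical paths can have at most $\ell$ edges between any two consecutive layers. Once that is in place, the conclusion is immediate. One should also handle the trivial degenerate cases (some $Q_i$ empty, or $\bigcup Q_i$ disconnected) but these do not affect the layering statement. Finally, I would remark that $\bigcup_j V_j$ may be a proper subset of $V(G)$ — this is fine, the lemma only asserts that the subgraph $\bigcup_j Q_j$ is $\ell$-layered, and all distances are measured in the ambient graph $G$, which is what makes the parallelism hypothesis meaningful.
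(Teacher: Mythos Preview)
Your proposal is correct and follows essentially the same approach as the paper: both define the layers as the fibres of $d_G(a_1,\cdot)$ restricted to $\bigcup_j V(Q_j)$, use parallelism to ensure each $Q_j$ is vertical with respect to $a_1$ (hence hits each layer at most once and has all edges between consecutive layers), and then bound the inter-layer edge count by $\ell$. Your write-up is in fact a bit more explicit than the paper's about why the layers are independent and why the edge count is at most $\ell$, but the idea is identical.
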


 \begin{proof}
 We set $U:=\bigcup_{i=1}^{\ell}V(Q_i)$, $r:=d(a_1, b_1)$, and for every $i\in \sg{0,\ldots, r}$, we let $U_i:=\sg{u\in U: d(a_1, u)=i}$. As every path $Q_j$ is parallel to $P_1$, observe that for each $(i, j)\in \sg{0,\ldots, r}\times\sg{1,\ldots, \ell}$, we must have $|V(Q_j)\cap U_i|\leq 1$. In particular, for each $i\in \sg{0,\ldots, r}$, we have $|U_i|\leq \ell$. Hence, $(U_0, \ldots, U_r)$ forms a partition of $U$, such that each edge of $G[U]$ has its two endvertices in some pair $(U_i, U_{i'})$ such that $|i'-i|\leq 1$. Moreover, as the paths $Q_j$ are all parallel to $P_1$, no edge of a path $Q_j$ can have both endvertices in the same set $U_i$, so $(U_0,\ldots, U_r)$ is indeed a $\ell$-layering of $\bigcup_{j=1}^{\ell}Q_j$.
 \end{proof}

 \begin{lemma}
  \label{clm: parallel-hereditary}
  If $P$ is a path parallel to $P_1$ with endvertices $u,v\in V(G)$, then every shortest $uv$-path in $G$ is also parallel to $P_1$.
 \end{lemma}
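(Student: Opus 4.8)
The plan is to unwind the definition of ``parallel to $P_1$'' and use the fact that $P_1$ is an isometric path between $a_1$ and $b_1$. Recall $P$ is parallel to $P_1$ means that $P$ is a subpath of some shortest $a_1b_1$-path $R$ of $G$. So write $R = R_1 \cdot P \cdot R_2$, where $R_1$ is an $a_1u$-path and $R_2$ is a $vb_1$-path (one or both possibly trivial), and $R$ has length $d(a_1,b_1)$. First I would record the standard additivity along a shortest path: since $R$ is a shortest $a_1b_1$-path containing $u$ and $v$ in this order, the subpath $R_1$ is a shortest $a_1u$-path, $P$ is a shortest $uv$-path, $R_2$ is a shortest $vb_1$-path, and moreover $d(a_1,u) + d(u,v) + d(v,b_1) = d(a_1,b_1)$.

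Now let $P'$ be any shortest $uv$-path in $G$; in particular $P'$ has length $d(u,v)$, the same length as $P$. Consider the walk $W := R_1 \cdot P' \cdot R_2$ from $a_1$ to $b_1$. Its length is $d(a_1,u) + d(u,v) + d(v,b_1) = d(a_1,b_1)$, so $W$ is a shortest $a_1b_1$-walk, hence a minimum-length walk, hence it is actually a path (a walk of minimum length between two vertices cannot repeat a vertex). Therefore $W$ is a shortest $a_1b_1$-path in $G$ that contains $P'$ as a subpath, which by definition means $P'$ is parallel to $P_1$, as desired.

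The only point requiring a small amount of care — and the step I expect to be the mild technical obstacle — is justifying that $W$ is genuinely a \emph{path} and not merely a closed-up walk: a priori $P'$ could share internal vertices with $R_1$ or $R_2$. This is handled by the observation that any walk from $a_1$ to $b_1$ of length exactly $d(a_1,b_1)$ must be a path, since shortcutting through any repeated vertex would yield a strictly shorter $a_1b_1$-walk, contradicting minimality. (Equivalently: the vertices of $W$ partition according to their distance from $a_1$, which is strictly increasing along $W$ because every edge of a shortest path changes the distance to an endpoint by exactly one; hence $W$ is vertical with respect to $a_1$ and in particular has no repeated vertex.) With this, the definition of ``parallel'' is met verbatim and the proof is complete.
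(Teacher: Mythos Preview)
Your proof is correct and follows essentially the same approach as the paper's: the paper's own proof is a one-liner (``This immediately follows from the observation that every subpath of a shortest path is also a shortest path''), and your argument spells out precisely the replacement step that this sentence is implicitly invoking. Your explicit justification that the concatenation $W=R_1\cdot P'\cdot R_2$ is a genuine path (because a minimum-length walk cannot repeat a vertex) is a detail the paper leaves to the reader.
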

 
 \begin{proof}
  This immediately follows from the observation that every subpath of a shortest path is also a shortest path. 
 \end{proof}
 
 We consider the partial order $\leq_1$ on $V(G)$ defined by setting for every two vertices $u,v\in V(G)$, 
 $u\leq_1 v$ if and only if $d_G(a_1,u)\leq d_G(a_1,v)$. Note in particular that the vertex set of every path $P$ which is parallel to $P_1$, forms a chain with respect to $\leq_1$.

 \begin{lemma}
  \label{lem: parallel-concatenation}
  Let $a\leq_1 b\leq_1 c$ be three vertices such that there exist two paths $P,Q$ both parallel to $P_1$, such that $P$ connects $a$ to $b$ and $Q$ connects $b$ to $c$. Then $P\cdot Q$ is parallel to $P_1$. 
 \end{lemma}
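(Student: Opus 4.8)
The plan is to build, from the two shortest $a_1b_1$-paths witnessing that $P$ and $Q$ are parallel to $P_1$, a single shortest $a_1b_1$-path that contains $P\cdot Q$ as a subpath; since any subpath of a shortest path is parallel to $P_1$, this suffices.

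First I would fix a shortest $a_1b_1$-path $R_P$ having $P$ as a subpath and a shortest $a_1b_1$-path $R_Q$ having $Q$ as a subpath, and orient both from $a_1$ to $b_1$. Along any shortest $a_1b_1$-path, the function $d(a_1,\cdot)$ is strictly increasing, taking the value $j$ on the $j$-th vertex; so, since $a\leq_1 b$, the vertex $a$ precedes $b$ on $R_P$, and since $b\leq_1 c$, the vertex $b$ precedes $c$ on $R_Q$. Write $R_P=A\cdot P\cdot B$ with $A:=R_P[a_1,a]$ and $B:=R_P[b,b_1]$, and $R_Q=C\cdot Q\cdot D$ with $C:=R_Q[a_1,b]$ and $D:=R_Q[c,b_1]$. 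I would then consider the vertex sequence $R:=A\cdot P\cdot Q\cdot D$, which runs through $a_1,\dots,a,\dots,b,\dots,c,\dots,b_1$.

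The heart of the argument is to check that $R$ is a genuine (simple) path. For this I would again track $d(a_1,\cdot)$: it is strictly increasing along each of $A$, $P$, $Q$, $D$, since each is a subpath of a shortest $a_1b_1$-path, and it ranges over $[0,d(a_1,a)]$, $[d(a_1,a),d(a_1,b)]$, $[d(a_1,b),d(a_1,c)]$, $[d(a_1,c),r]$ respectively, where $r:=d(a_1,b_1)$, each value being attained exactly once on each piece. Because $d(a_1,a)\leq d(a_1,b)\leq d(a_1,c)$, two of these four pieces can only share a vertex whose $d(a_1,\cdot)$-value lies in the (at most one point) overlap of their ranges; a short case analysis then shows that consecutive pieces meet exactly at the junction vertices $a$, $b$, $c$, and non-consecutive pieces are disjoint (the cases where some of $a,b,c$ coincide are handled identically and are harmless). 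In particular $V(P)\cap V(Q)=\sg{b}$, so $P\cdot Q$ is well defined, and $R$ is a simple $a_1b_1$-path. This simplicity check is the only place where something could conceivably fail, so it is the main obstacle.

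Finally I would compute lengths. Since $A$, $P$, $Q$, $D$ are subpaths of shortest $a_1b_1$-paths, $|A|=d(a_1,a)$, $|P|=d(a_1,b)-d(a_1,a)$, $|Q|=d(a_1,c)-d(a_1,b)$ and $|D|=r-d(a_1,c)$, whence $|R|=d(a_1,a)+(d(a_1,b)-d(a_1,a))+(d(a_1,c)-d(a_1,b))+(r-d(a_1,c))=r=d(a_1,b_1)$. Thus $R$ is a shortest $a_1b_1$-path, and $P\cdot Q=R[a,c]$ is a subpath of it, so $P\cdot Q$ is parallel to $P_1$.
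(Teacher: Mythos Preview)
Your proof is correct and follows essentially the same approach as the paper's. The paper's argument is a compressed two-line version of yours: it observes that $P\cdot Q$ contains at most one vertex at each distance from $a_1$ (hence is a shortest $ac$-path) and then says ``it is not hard to conclude that $P\cdot Q$ is parallel to $P_1$''; the construction you carry out explicitly --- prefixing with $R_P[a_1,a]$ and suffixing with $R_Q[c,b_1]$ and checking simplicity via the strictly increasing $d(a_1,\cdot)$ values --- is exactly the content hidden in that ``not hard''.
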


 \begin{proof}
  Note that $P\cdot Q$ must be a shortest $ac$ path, as for each $i\geq 0$, it contains at most one vertex at distance exactly $i$ from $a_1$. In particular, it is not hard to conclude that $P\cdot Q$ is parallel to $P_1$.
 \end{proof}
 
 \paragraph*{Reducing $\cP$.}
 The first step in the proof of \Cref{thm: main-poly} consists in modifying the covering family $\cP$ in order to make it reduced (see definition below). The intuition behind this step is that our proof of \Cref{thm: main-poly} works particularly well when all paths in $\cP$ intersect $P_1$. Informally, the reduction operation consists in making $\cP$ contain as much paths that intersect $P_1$ as possible. This step will turn out to be useful at the very end of our proof (see the proof of \Cref{lem: QinP2}).
 
 A shortest path $Q$ of $G$ is called a \emph{reducing path} if: 
  \begin{itemize}
  \item $Q$ is disjoint from $P_1$, and admits a subpath $P$ parallel to $P_1$, that connects two vertices $u, v$, such that $u<_{1}v$,
  \item there exists a shortest path $R$ in $G$ from $u$ to $v$ that intersects $P_1$.
 \end{itemize}
\Cref{fig: transformP2} depicts a reducing path. If a family $\cQ$ of shortest paths has no reducing path, then we say that $\cQ$ is \emph{reduced}.
 
 \begin{proposition}
  \label{prop: reduced}
  Let $\cQ$ be a family of shortest paths. Then there exists a family $\cQ'$ of shortest paths which is reduced, such that $|\cQ'|\leq 2|\cQ|$ and that the paths of $\cQ'$ cover all the edges covered by the paths of $\cQ$, i.e., such that
  $$\bigcup_{P\in \cQ}E(P)\subseteq \bigcup_{P\in \cQ'}E(P).$$
 \end{proposition}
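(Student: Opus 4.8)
The plan is to reach a reduced family through a sequence of local \emph{reduction moves}: each move picks one reducing path of the current family, replaces it by at most two shortest paths that both meet $P_1$, and leaves every other path untouched, without ever shrinking the set of covered edges. A short potential argument then bounds both the number of moves and the size of the resulting family.

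\emph{The move.} Suppose the current family contains a reducing path $Q$, with endvertices $s,t$, with parallel subpath $P=Q[u,v]$ (so that $s,u,v,t$ occur in this order along $Q$ and $u<_1 v$), and with a shortest $uv$-path $R$ meeting $P_1$. I would replace $Q$ by two paths $Q^*$ and $P'$, defined as follows. Let $Q^*$ be the walk obtained by concatenating $Q[s,u]$, then $R$, then $Q[v,t]$. Since $Q$ is a shortest $st$-path passing through $u$ and then $v$, we have $d(s,t)=d(s,u)+d(u,v)+d(v,t)$, so $Q^*$ has length exactly $d(s,t)$; hence it is a simple, shortest $st$-path. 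It contains $R$, so it meets $P_1$, and $E(Q^*)\supseteq E(Q[s,u])\cup E(Q[v,t])=E(Q)\setminus E(P)$. To recover the edges of $P$, let $S$ be a shortest $a_1b_1$-path having $P$ as a subpath (it exists because $P$ is parallel to $P_1$, and since distances to $a_1$ strictly increase along $S$ and $u<_1 v$, in fact $P=S[u,v]$), and put $P':=S[a_1,v]$. As a subpath of $S$ this is a shortest $a_1v$-path; it contains $a_1\in V(P_1)$, so it meets $P_1$, and $E(P')\supseteq E(S[u,v])=E(P)$. Altogether $E(Q^*)\cup E(P')\supseteq E(Q)$, both new paths are shortest paths, and both meet $P_1$. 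The degenerate cases are harmless: if $s=u$ or $t=v$ the corresponding portion of $Q^*$ is empty, and if $P=Q$ then $Q^*=R$ (and one could even drop $Q^*$, keeping only $P'$).

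\emph{The iteration.} Starting from $\cQ_0:=\cQ$, I would apply the move as long as the current family $\cQ_t$ contains a reducing path, ending with a reduced family $\cQ'$. For a family $\mathcal F$ of shortest paths, let $\mu(\mathcal F)$ count the members of $\mathcal F$ disjoint from $P_1$. A reducing path is, by definition, disjoint from $P_1$, whereas $Q^*$ and $P'$ both meet $P_1$; hence each move removes one path counted by $\mu$, adds no such path, and increases the total number of paths by at most one. Thus $\mu(\cQ_{t+1})=\mu(\cQ_t)-1$ and $|\cQ_{t+1}|\le|\cQ_t|+1$, so (as $\mu\ge 0$) the process halts after at most $\mu(\cQ_0)\le|\cQ|$ moves, yielding a reduced family $\cQ'$ with $|\cQ'|\le|\cQ|+|\cQ|=2|\cQ|$ and $\bigcup_{P\in\cQ}E(P)\subseteq\bigcup_{P\in\cQ'}E(P)$, as desired. (We may first assume that $\cQ$ has no repeated path.)

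The points that need care are verifying that $Q^*$ and $P'$ really are shortest paths — both follow from additivity of distances, along $Q$ and along $S$ respectively — and that $\mu$ strictly decreases at every step; the latter is precisely where the two clauses in the definition of a reducing path (``$Q$ disjoint from $P_1$'' and ``$R$ meets $P_1$'') get used. I do not expect a genuine obstacle here: once the replacement $\{Q^*,P'\}$ is identified, the argument is essentially bookkeeping.
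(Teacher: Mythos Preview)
Your proof is correct and follows essentially the same approach as the paper: iteratively replace a reducing path $Q$ by two shortest paths that both meet $P_1$ (one obtained by rerouting the middle via $R$, one extending the parallel subpath $P$ towards $a_1$), and use the number of paths disjoint from $P_1$ as a strictly decreasing potential to bound the number of iterations by $|\cQ|$. The only cosmetic difference is that the paper extends $P$ on both sides to a full shortest $a_1b_1$-path $R_1\cdot P\cdot R_2$, whereas you take only $S[a_1,v]$; both choices cover $E(P)$ and meet $P_1$, so the argument is the same.
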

 \begin{proof}
  Assume that $\cQ$ is not reduced, and let $Q$ be a reducing path in $\cQ$. We also let $P$ be a subpath of $Q$ with endvertices $u,v$ such that $u<_1v$, and $R$ be a shortest path in $G$ from $u$ to $v$ intersecting $P_1$. We let $R_1, R_2$ denote two shortest paths respectively from $a_1$ to $u$ and from $v$ to $b_1$. We also write $Q=Q_1\cdot Q_2\cdot Q_3$, so that $Q_2$ is the $uv$-subpath of $Q$ parallel to $P_1$ (see \Cref{fig: transformP2}). Observe now that both paths $Q':=Q_1\cdot R\cdot Q_3$ and $Q'':=R_1\cdot Q_2\cdot R_2$ are shortest paths in $G$ that both intersect $P_1$ and cover all the edges of $Q$. In particular, it implies that the family of paths $\cQ':=(\cQ\setminus \sg{Q})\cup\sg{Q', Q''}$ covers all the edges covered by the paths of $\cQ$, and has strictly fewer paths than $\cQ$ that do not intersect $P_1$. 
  We apply iteratively the same operation as long as the obtained family $\cQ'$ is not reduced. In particular, as at each step, the number of paths of $\cQ$ which are disjoint with $P_1$ strictly decreases, then after at most $|\cQ|$ iterations, we obtain a reduced family with the desired properties.
 \end{proof}

\begin{figure}[htb]
  \centering  
  \includegraphics[scale=1]{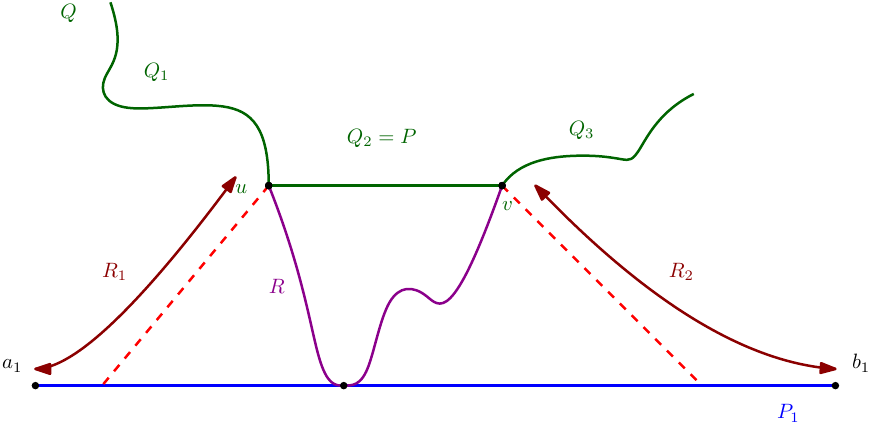}
  \caption{The path $Q$ represented in green is reducing. $P_1$ is represented in blue.}  
  \label{fig: transformP2}
\end{figure}

Up to applying \Cref{prop: reduced} to $\cP$, we will assume from now on that $\cP$ is a reduced family of at most $2k$ shortest paths of $G$ that edge-cover $G$. 

\paragraph*{Good and bad vertices.}
We now partition the set $\cP:=\sg{P_i: 1\leq i\leq 2k}$ into the subset $\cP_1$ of the paths in $\cP$ that intersect $P_1$, and $\cP_2:=\cP\setminus \cP_1$. 
 
For every $P\in \cP_1$, we let $C_P$ denote the maximal subpath of $P$ with both endvertices $a_P, b_P$ in $V(P_1)$, so that $a_P\leq_1 b_P$ (up to considering $P^{-1}$ instead of $P$, we also assume that $a_P$ is before $b_P$ on $P$). Note that $C_{P_1}=P_1$, and that by definition of $\cP_1$, each $C_P$ has at least one vertex, and is parallel to $P_1$.
We moreover consider for every $P\in \cP_1$ the two vertices $u_P, v_P$ that respectively maximize $d_G(u_P, a_P)$ and $d_G(b_P, v_P)$, such that $u_P,a_P,b_P,v_P$ appear in this order on $P$, and such that the paths $P[u_P, a_P]$ and $P[b_P, v_P]$ are parallel to $P_1$. We now let $\tC_P:=P[u_P, v_P]$ and 
$A_P, B_P$ denote the two subpaths of $P$ such that $P=A_P\cdot \tC_P\cdot B_P$ (see \Cref{fig: AiBiCi}). We stress out that in general, the path $\tC_P$ is not necessarily parallel to $P_1$.  
 
\begin{figure}[htb]
  \centering  
  \includegraphics[scale=1]{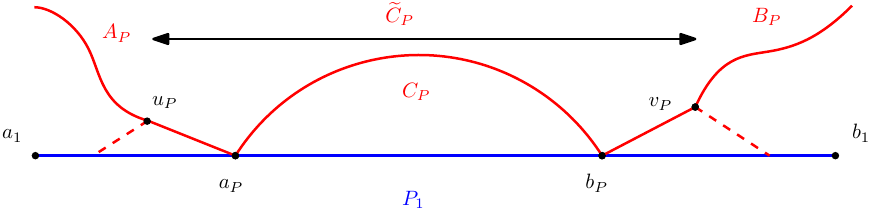}
  \caption{The red path represents some path $P\in \cP_1$. The blue path is $P_1$.}  
  \label{fig: AiBiCi}
\end{figure}

 We let $\cA:= \sg{A^{-1}_P: P\in \cP_1}\cup \sg{B_P: P\in \cP_1}$ and $\cB:=\sg{\tC_P: P\in \cP_1}$. In particular, $P_1\in \cB$. Note that by definition, every path in $\cB$ is the concatenation of at most $3$ paths that are parallel to $P_1$, and recall that $|\cP|\leq 2k$, hence we have $|\cB|\leq 2k$. 

 We say that the edges of the paths in $\cA\cup\cP_2$ are \emph{bad}, and we call a 
 vertex $v\in V(G)$ \emph{bad} if it is the endvertex of a bad edge. We call every vertex which is not bad \emph{good}, and we let $V_b$ and $V_g$ denote respectively the set of bad vertices and the set of good vertices of $G$. We moreover set $X_0:=\sg{a_P: P\in \cP_1}\cup \sg{b_P: P\in \cP_1}$ and $X_1:=\sg{u_P: P\in \cP_1}\cup \sg{v_P: P\in \cP_1}$. 
 Note that in particular, $|X_0|\leq 4k$ and that the only bad vertices that might belong to $P_1$ are the vertices $a_P, b_P$ for $P\in \cP_1$ whenever they are respectively equal to $u_P, v_P$, hence $V(P_1)\cap V_b\subseteq X_0$.  

 We now consider the subgraph $G_0 := \bigcup_{P\in \cB}P$ of $G$.
 The following remark immediately follows from the definition of $\cB$, the fact that $|\cP_1|\leq |\cP|\leq 2k$, and that every path in $\cB$ is edge-covered by at most $3$ paths that are parallel to $P_1$.

 \begin{remark}
  \label{rem: G0}
  $G_0$ is edge-covered by at most $6k$ paths that are parallel to $P_1$.
 \end{remark}

 \begin{lemma}
  \label{clm: Good}
  We have $V_g\subseteq V(G_0)$. Moreover, $G[V_g]$ is a subgraph of $G_0$.  \end{lemma}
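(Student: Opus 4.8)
The plan is to first establish that the family $\cA\cup\cB\cup\cP_2$ edge-covers $G$. Indeed, every path $P\in\cP_2$ has all of its edges trivially covered by $\cP_2$, while for every $P\in\cP_1$ the decomposition $P=A_P\cdot \tC_P\cdot B_P$ gives $E(P)=E(A_P)\cup E(\tC_P)\cup E(B_P)$; since $A_P$ and $A_P^{-1}$ have the same edge set with $A_P^{-1},B_P\in\cA$, and $\tC_P\in\cB$, every edge of $P$ appears on some path of $\cA\cup\cB$. As $\cP=\cP_1\cup\cP_2$ already edge-covers $G$, so does $\cA\cup\cB\cup\cP_2$.

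The next step is the key intermediate claim: \emph{every edge of $G$ that is not bad is an edge of $G_0$}. Recall that the bad edges are precisely the edges appearing on some path of $\cA\cup\cP_2$, and that $G_0=\bigcup_{P\in\cB}P$. So if $e\in E(G)$ is not bad, then $e$ appears on no path of $\cA\cup\cP_2$; by the previous paragraph it must therefore appear on some path of $\cB$, i.e. $e\in E(G_0)$.

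Both assertions then follow quickly. For $V_g\subseteq V(G_0)$: we may assume $G$ has no isolated vertex (these contribute nothing to pathwidth). Let $v\in V_g$; then $v$ is incident with some edge $e$, and $e$ is not bad, since otherwise $v$ would be an endvertex of a bad edge and hence bad. By the claim, $e\in E(G_0)$, so $v\in V(G_0)$. For the ``Moreover'' part, since $V(G[V_g])=V_g\subseteq V(G_0)$ it suffices to treat edges: if $e=xy\in E(G[V_g])$ were bad, then $x$ would be an endvertex of a bad edge and hence bad, contradicting $x\in V_g$; so $e$ is not bad, whence $e\in E(G_0)$ by the claim. Thus every vertex and every edge of $G[V_g]$ lies in $G_0$, i.e. $G[V_g]$ is a subgraph of $G_0$.

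I do not expect a genuine obstacle here: the argument is essentially bookkeeping, the only points needing (minor) care being the verification that the decomposition $P=A_P\cdot\tC_P\cdot B_P$ really accounts for all of $E(P)$ (immediate from the definitions) and the innocuous remark about isolated vertices.
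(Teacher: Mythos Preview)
Your proof is correct and follows essentially the same approach as the paper. The paper argues the first inclusion slightly differently---taking a good vertex $u$, picking a path $P\in\cP$ through it, and concluding $P\in\cP_1$ with $u\in V(\tC_P)$---whereas you go through an incident edge; but both routes rest on the same key observation (which the paper states verbatim for the ``Moreover'' part) that every edge of $E(G)\setminus E(G_0)$ is bad, and both implicitly sidestep isolated vertices in the same way.
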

 \begin{proof}
  Let $u$ be a good vertex. As $G$ is covered by $\cP$, there exists some path $P\in \cP$ such that $u\in V(P)$. In particular, as $u$ is a good vertex, we must have $P\in \cP_1$, and moreover $u$ must be a vertex of $\tC_{P}$. It implies that $u\in V(G_0)$, and thus that $V_g\subseteq V(G_0)$.
  The second part of the lemma follows from the fact that by definition, no bad edge can be incident to a good vertex. In particular, note that every edge from $E(G)\setminus E(G_0)$ is bad, hence $G[V_g]$ is indeed a subgraph of $G_0$.
 \end{proof}

 Our goal from now on will be to prove the following lemma. 
 
 \begin{lemma}[Main]
 \label{lem: mainP1}
  There exists a set $X\subseteq V(G)$ of size at most $720k^3+4k$ that separates every vertex on $P_1$ from $V_b$ in $G$.
 \end{lemma}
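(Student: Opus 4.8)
The plan is to build the separator $X$ as a union of a bounded number of ``layer-wise'' sets extracted from the $6k$-layered graph $G_0$. First recall that by \Cref{rem: G0}, $G_0 = \bigcup_{P\in\cB}P$ is edge-covered by at most $6k$ paths parallel to $P_1$, so by \Cref{clm: parallel implies layered} it is $6k$-layered via the natural layering $(U_0,\ldots,U_r)$ where $U_j = \{u\in V(G_0): d_G(a_1,u)=j\}$ and $r=d_G(a_1,b_1)$. Crucially, $P_1$ itself lies in $G_0$ with exactly one vertex per layer, and by \Cref{clm: Good} every good vertex lies in $V(G_0)$ and $G[V_g]$ is a subgraph of $G_0$; hence any path in $G$ from a vertex of $P_1$ to a bad vertex must, just before leaving $G_0$ (equivalently, just before meeting its first bad vertex), pass through a bad vertex of $G_0$ — but the only bad vertices of $G_0$ are the finitely many vertices of $X_0\cup X_1$ together with whatever bad vertices sit on the paths $\tC_P\in\cB$. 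So the first job is to understand exactly which vertices of $G_0$ are bad: a vertex of $G_0$ is bad iff it is incident to an edge of some $A_P^{-1}$, $B_P$ or $P\in\cP_2$, and by construction the edges of $A_P$, $B_P$ that touch $\tC_P$ do so only at $u_P,v_P\in X_1$. I would isolate the statement: $V_b\cap V(G_0)\subseteq X_0\cup X_1\cup \bigl(\bigcup_{P\in\cP_1}(\text{bad vertices internal to }\tC_P)\bigr)$, and note each $\tC_P$ has at most $2$ such ``entry points'' besides $u_P,v_P$ coming from where bad paths attach — this is where the reducedness of $\cP$ enters.

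Second, I would reduce to a statement purely inside the layered graph $G_0$: it suffices to find $X\subseteq V(G_0)$ of the claimed size separating $V(P_1)$ from the set $Y$ of bad vertices of $G_0$ within $G_0$, because $G[V_g]\subseteq G_0$ means no $P_1$–$V_b$ path in $G$ can avoid $G_0$ without already having hit a vertex of $Y$. Now the core geometric fact is that in a $6k$-layered graph, a ``layer cut'' $U_j$ has size at most $6k$ and separates everything before layer $j$ from everything after; $P_1$ meets every layer. So if all of $Y$ were confined to, say, a bounded interval of layers $[j_0,j_1]$, then $U_{j_0-1}\cup U_{j_1+1}$ would be a separator of size at most $12k$ — but $Y$ need not be confined. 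The right move is: for each path $\tC_P\in\cB$, look at the (at most $3$) parallel subpaths composing it; each parallel subpath occupies a contiguous run of layers and contributes at most one vertex per layer. The bad vertices of $\tC_P$ are the internal ones, and the structure of $A_P$, $B_P$ shows $u_P$, $v_P$ are the ``extreme'' attachment points, so actually the portion of $\tC_P$ that can be reached from $P_1$ through good vertices only is controlled. I expect to end up choosing $X$ as $X_0\cup X_1$ (size $\le 8k$) together with, for each of the $\le 2k$ paths in $\cB$ and each of its $\le 3$ parallel pieces, a bounded collection of entire layer-slices $U_j\cap V(G_0)$ near the ``bad endpoints'' of that piece — roughly $O(k)$ layers, each of size $\le 6k$, for each of $O(k)$ pieces, giving $O(k^3)$; tracking the constants ($6k$ paths, $\le 2k$ members of $\cB$, each a concatenation of $3$ parallel paths, and a careful count of how many layer-slices are needed per piece) is what produces the bound $720k^3+4k$.

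The main obstacle — and the reason this is ``the most technical part'' as the authors warn — is handling the non-parallel ``middle'' path $\tC_P$: it is genuinely not parallel to $P_1$, so it can wander back and forth across layers and is not itself $O(k)$-layered, which is exactly why one cannot just take $X=X_0\cup X_1$ and invoke \Cref{clm: parallel implies layered} on all of $G_0$. The delicate point is to argue that although $\tC_P$ is a single path with the two parallel ``ends'' $P[u_P,a_P]$ and $P[b_P,v_P]$ removed, one can nonetheless bound the number of layers in which $\tC_P$ meets the ``good-reachable from $P_1$'' region, using reducedness to forbid $\tC_P$ from having long parallel-to-$P_1$ excursions that would create shortcuts through $P_1$ (any such excursion would make $P$ a reducing path, contradicting that $\cP$ is reduced). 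I would therefore spend the bulk of the argument proving a claim of the form: for each $P\in\cP_1$, the set of layers $j$ such that $U_j$ contains a vertex of $\tC_P$ that is both bad and good-reachable from $V(P_1)$ in $G_0$ has size $O(k^2)$; combined with $|U_j|\le 6k$ and $|\cB|\le 2k$ this yields the separator. The remaining steps — verifying the separation property via the layer-cut observation, and assembling $X = X_0\cup X_1\cup(\text{chosen layer slices})$ with the size estimate — are then routine.
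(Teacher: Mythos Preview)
Your reduction to working inside the $6k$-layered graph $G_0$ is correct and matches the paper exactly: the paper also argues (in its proof of \Cref{lem: mainP1}) that any $P_1$-to-$V_b$ path in $G-X$ must in fact be a path of $G_0$ ending at a bad vertex of $G_0$, so it suffices to separate $V(P_1)$ from $V_b\cap V(G_0)$ inside $G_0$.

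The core technical step, however, has a genuine gap. You propose to build $X$ from full layer slices $U_j$, selected by bounding ``the set of layers $j$ such that $U_j$ contains a vertex of $\tC_P$ that is both bad and good-reachable from $V(P_1)$''. This does not work as stated. A layer slice $U_j$ cannot by itself separate $P_1$ from a bad vertex lying in a layer strictly between $0$ and $r$, because $P_1$ meets every layer and therefore sits on both sides of any single layer cut; a separator made only of layer slices thus reduces to placing the bad vertices themselves into $X$. But the number of bad vertices of $G_0$ is not bounded in terms of $k$: a bad path $A\in\cA\cup\cP_2$ and a path $\tC_P$ are both shortest paths of $G$ and can share an arbitrarily long subpath, every vertex of which is a bad vertex of $G_0$. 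Your appeal to reducedness to rule this out is misplaced: reducedness constrains only the paths of $\cP_2$ (those disjoint from $P_1$), not the paths $\tC_P\in\cB$, which by construction already meet $P_1$; it says nothing about ``long parallel-to-$P_1$ excursions'' of $\tC_P$.

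What is actually needed is the Key Lemma (\Cref{lem: keyP1}): for each bad path $A\in\cA\cup\cP_2$ and each parallel piece $P\in\cB'$ it meets, there is a set $X_{A,P}$ of size $30k$ separating $V(A)\cap V(P)$ from $V(P_1)$ in $G_0$; taking $X:=X_0\cup\bigcup X_{A,P}$ over the at most $|\cA\cup\cP_2|\cdot|\cB'|\le 4k\cdot 6k=24k^2$ relevant pairs gives the bound $720k^3+4k$. The separator $X_{A,P}$ is \emph{not} a union of layer slices: in the proof of \Cref{lem: left-to-right} it consists of two boundary layers $V_{\iota(b_L)}\cup V_{\iota(c_L)}$ together with at most $6k$ individually chosen vertices $y_1,\ldots,y_\ell$, produced by an inductive argument that exploits a one-sided monotonicity property (``$\nearrow$-free'' or ``$\searrow$-free'') of the subpath $Q$ of $P$ spanned by $V(A)\cap V(P)$. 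Establishing this monotonicity for $Q$ is where the real work lies: for $A\in\cA$ it comes from the maximality in the definition of $u_P,v_P$ (\Cref{lem: QinA}), and for $A\in\cP_2$ it is precisely where reducedness is used (\Cref{lem: QinP2}). Your plan contains no analogue of this construction, and without it the separator cannot be assembled.
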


\begin{proof}[Proof of \Cref{thm: main-poly}~using \Cref{lem: mainP1}]
We let $X$ be given by \Cref{lem: mainP1}. In particular, by \Cref{clm: Good}, every component of $G-X$ that contains a vertex of $P_1$ induces a subgraph of $G_0$. By \Cref{lem: k-layered}, \Cref{clm: parallel implies layered} and \Cref{rem: G0}, $G_0$ is $6k$-layered, and thus has pathwidth at most $6k$, allowing us to conclude the proof of \Cref{thm: main-poly}.
By \Cref{rem: G0}, $G_0$ is edge-covered by at most $6k$ paths parallel to $P_1$, which by \Cref{lem: parallel-concatenation} implies that it has a $6k$-layering, and thus by \Cref{lem: k-layered}, its pathwidth is at most $6k$. 
\end{proof}

\paragraph*{Reduction to a separation problem in the graph \texorpdfstring{$G_0$}{}.}
 We will now show that, in order to prove \Cref{lem: mainP1}, it is enough to separate $V(P_1)$ from $V_b$ in the subgraph $G_0$ of $G$. 
We show that \Cref{lem: mainP1} amounts to prove the following key lemma, that we will prove in \Cref{sec: key}.

 \begin{lemma}[Key lemma]
 \label{lem: keyP1}
  Let $P$ be a path of $G_0$ which is parallel to $P_1$, and $A\in \cA\cup \cP_2$ which intersects $P$. 
  Then there exists some set $X_{A,P} \subseteq V(G_0)$ of vertices of size $30k$ such that $X_{A,P}$ separates every vertex of $V(A)\cap V(P)$ from the vertices of $P_1$ in $G_0$.
 \end{lemma}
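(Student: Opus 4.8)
The plan is to first read off the combinatorial structure of the intersection $W:=V(A)\cap V(P)$, and then to assemble $X_{A,P}$ from a bounded number of BFS-layers of $G_0$. Set $r:=d(a_1,b_1)$ and, for $0\le i\le r$, let $U_i:=\{v\in V(G_0):\ d(a_1,v)=i\}$; by \Cref{rem: G0} together with (the proof of) \Cref{clm: parallel implies layered}, the sequence $(U_0,\dots,U_r)$ is a $6k$-layering of $G_0$, so $\lvert U_i\rvert\le 6k$ for every $i$, and, being vertical with respect to $a_1$, each of $P$ and $P_1$ meets every $U_i$ in at most one vertex; write $p_i$ for the vertex of $P_1$ in $U_i$. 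Since $A$ and $P$ are both shortest paths of $G$ and $P$ is vertical, for any $u,v\in W$ the length of $P[u,v]$ equals $d(u,v)=\lvert d(a_1,u)-d(a_1,v)\rvert$, and a routine betweenness argument along the shortest path $A$ then shows that the vertices of $W$ occur along $A$ in the same order as along $P$, up to reversing $A$. Write $W=(w_1,\dots,w_m)$ in this common order and $\ell_i:=d(a_1,w_i)$, so $\ell_1<\dots<\ell_m$. For each $i$ the subpath $P[w_i,w_{i+1}]$ is parallel to $P_1$, hence by \Cref{clm: parallel-hereditary} so is the shortest $w_iw_{i+1}$-path $A[w_i,w_{i+1}]$; since $w_1\leq_1\dots\leq_1 w_m$, \Cref{lem: parallel-concatenation} then yields that $R:=A[w_1,w_m]$ is parallel to $P_1$. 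Moreover $W\subseteq V(P[w_1,w_m])\subseteq V(G_0)$ and $P[w_1,w_m]$ is a shortest $w_1w_m$-path of $G_0$, so $d_{G_0}(w_1,w_m)=\ell_m-\ell_1$; thus every shortest $w_1w_m$-path of $G_0$ is a shortest $w_1w_m$-path of $G$, hence parallel to $P_1$ and vertical. Letting $I$ be the union of all shortest $w_1w_m$-paths of $G_0$ and $T_j:=I\cap U_j$, we get $W\subseteq I$, $\lvert T_j\rvert\le 6k$, $T_j\neq\emptyset$ exactly for $\ell_1\le j\le\ell_m$, $T_{\ell_1}=\{w_1\}$, $T_{\ell_m}=\{w_m\}$, and $I$ (with layers $(T_j)$) is again a layered graph of the same kind.

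If $m\le 1$ we take any $X_{A,P}\subseteq V(G_0)$ of size $30k$ containing $W$, so assume $m\ge 2$. I would look for $X_{A,P}$ of the form $U_{\ell_1}\cup U_{\ell_m}\cup Z$ with $Z\subseteq V(G_0)$ of size at most $18k$ (padding to size exactly $30k$ at the end is harmless). Deleting $U_{\ell_1}$ and $U_{\ell_m}$ splits $G_0$ along its layers; the only component of the remainder meeting $W$ is the band $H:=G_0[U_{\ell_1+1}\cup\dots\cup U_{\ell_m-1}]$, which contains $W':=W\setminus\{w_1,w_m\}$ as well as $P'_1:=\{p_{\ell_1+1},\dots,p_{\ell_m-1}\}$; so it remains to choose $Z\subseteq V(H)$ with $\lvert Z\rvert\le 18k$ separating $W'$ from $P'_1$ in $H$. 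The leverage for this is that $W'\subseteq I\cap V(H)$ while $I$ stays away from $P_1$ inside the band — and it is here that the two types of $A$ are handled differently.

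If $A\in\cP_2$, then $A$ is a member of the reduced family $\cP$ and is disjoint from $P_1$; since $R$ is a subpath of $A$ parallel to $P_1$ joining $w_1<_1 w_m$, were some shortest $w_1w_m$-path of $G$ to meet $P_1$, then $A$ would be a reducing path, contradicting the reducedness of $\cP$. Hence no shortest $w_1w_m$-path of $G$ — and a fortiori none of $G_0$ — passes through a vertex of $P_1$, i.e. $I\cap V(P_1)=\emptyset$. I would then build $Z$ from the two ``frontier layers'' $T_{\ell_1+1}$ and $T_{\ell_m-1}$ of $I$ inside $H$ together with a bounded correction near the two ends of $I$, and argue — using the vertical structure of $I$ and the fact that $I$ avoids $P_1$ — that $G_0-(U_{\ell_1}\cup U_{\ell_m}\cup Z)$ contains no path from $W'$ to $P'_1$, since such a path, starting inside $I$ and ending outside it, would be forced back into $I$ near one of its ends.

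If instead $A=A_{P''}^{-1}$ (or $B_{P''}$) for some $P''\in\cP_1$, then $A$ is only a subpath of a member of $\cP_1$, so reducedness is unavailable; here I would exploit the maximality in the definition of $u_{P''}$ (resp. $v_{P''}$) instead: on $P''$ the vertices occur in the order $\dots,w_m,\dots,w_1,\dots,u_{P''},\dots,a_{P''},\dots$ with $P''[u_{P''},a_{P''}]$ parallel to $P_1$ and maximal with that property, so the step of $P''$ just before $u_{P''}$ destroys parallelism, and combining this with \Cref{lem: parallel-concatenation} and bookkeeping on the direction (increasing or decreasing in $d(a_1,\cdot)$) of each parallel subpath should again confine the relevant shortest $w_1w_m$-paths of $G_0$ and yield a suitable $Z$. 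I expect this last step — turning ``$I$ contains $W'$ but avoids $P_1$'' into a genuine $O(k)$-size separator, despite $I$ possibly spanning many layers so that one cannot simply delete all of $I$ — to be the technical core of the argument, and the case $A\in\cA$ to be the more delicate one, since the only structural input there is the maximality of $u_{P''}$ and $v_{P''}$, and one must additionally ensure that the bad portions of $A$ lying outside $[w_1,w_m]$ do not re-enter $G_0$ near $P_1$.
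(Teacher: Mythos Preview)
Your proposal correctly sets up the layered structure of $G_0$, correctly orders $W=V(A)\cap V(P)$ along both $A$ and $P$, and correctly observes (for $A\in\cP_2$) that reducedness of $\cP$ forces every shortest $w_1w_m$-path in $G$ to avoid $P_1$, so that your set $I$ is disjoint from $\{p_{\ell_1+1},\dots,p_{\ell_m-1}\}$. These ingredients all match what the paper uses.

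The genuine gap is the separation step. Knowing $I\cap V(P_1)=\emptyset$ is \emph{not} enough to produce an $O(k)$-separator between $W'$ and $P'_1$ in the band $H$, and your proposed $Z$ (the layers $T_{\ell_1+1}$, $T_{\ell_m-1}$ ``plus a bounded correction near the ends'') does not work: a path in $H$ from some $w_i$ to some $p_j$ may exit $I$ at an arbitrary intermediate layer, nowhere near $\ell_1$ or $\ell_m$, simply by using an edge $vv'$ of one of the $6k$ covering paths with $v\in I$, $v'\notin I$. Such exit edges can sit at every layer of the band, so the boundary of $I$ in $H$ has no reason to have size $O(k)$, and your heuristic ``such a path would be forced back into $I$ near one of its ends'' has no justification. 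For $A\in\cA$ you have not even established the analogue of $I\cap V(P_1)=\emptyset$; you only gesture at maximality of $u_{P''},v_{P''}$.

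What the paper does instead is to extract a stronger, \emph{directional} property of $Q:=P[w_1,w_m]$. Call $Q$ $\nearrow$-free if no vertical path in $G_0$ runs from some $x\in U_Q\subseteq V(P_1)$ to some $y\in V(Q)$ with $x\le_1 y$ (and dually $\searrow$-free). For $A\in\cA$ the maximality of $u_{P''},v_{P''}$ yields that $Q$ is $\nearrow$-free or $\searrow$-free; for $A\in\cP_2$ reducedness splits $Q=Q_1\cdot Q_2$ with $Q_1$ $\searrow$-free and $Q_2$ $\nearrow$-free. The separator is then built not from layers of $I$ but by an iterative sweep over the $k'\le 6k$ covering paths $Q_1,\dots,Q_{k'}$ of $G_0$: for each $Q_j$ in turn one picks a single vertex $y_j$ (the $\le_1$-rightmost point where $Q_j$ meets the current ``explored region'' $W_{j-1}$), adjoins the left half $Q_j^-$ to form $W_j$, and uses $\nearrow$-freeness inductively to certify that $W_j$ never reaches $U_Q$. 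This gives a separator $X'_0\cup\{y_1,\dots,y_{k'}\}$ of size at most $3k'\le 18k$. The directional hypothesis is precisely what blocks a monotone path from $U_Q$ back up into $W_j$ at the inductive step; your weaker hypothesis $I\cap V(P_1)=\emptyset$ gives no such control.

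Finally, your worry that ``the bad portions of $A$ lying outside $[w_1,w_m]$ might re-enter $G_0$ near $P_1$'' is a red herring: the lemma asks for a separator in $G_0$, and only the trace $V(A)\cap V(P)\subseteq V(Q)$ matters there.
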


\begin{figure}[htb]
  \centering  
  \includegraphics[scale=1]{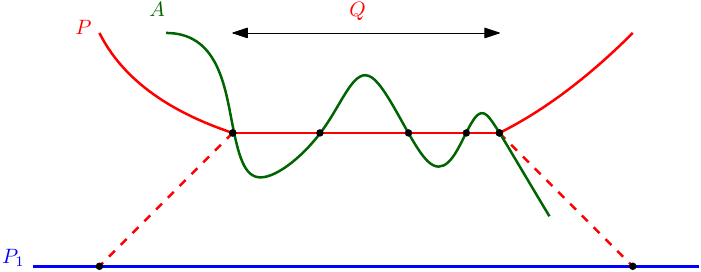}
  \caption{Configuration of \Cref{lem: keyP1}.}  
  \label{fig: keyL}
\end{figure}

 \begin{proof}[Proof of \Cref{lem: mainP1} using \Cref{lem: keyP1}]
  Recall that every path of $\cB$ is the concatenation of at most $3$ paths parallel to $P_1$. We thus consider a set $\cB'$ of size at most $3|\cB|$ of paths parallel to $P_1$ that cover the edges of the paths from $\cB$. 
  We let $I$ denote the set of all pairs $(A,P)$ such that $A\in \cA\cup \cP_2$, $P\in \cB'$, and $V(A)\cap V(P)\neq \emptyset$.
  
  We set 
  $$X:=X_0\cup \left(\bigcup_{(A,P)\in I}X_{A,P}\right),$$
  where the sets $X_{A,P}$ are given by \Cref{lem: keyP1}.
  Note that $|\cA\cup \cP_2| \leq 2|\cP_1|+ |\cP_2| \leq |\cP_1| + |\cP| \leq 4k$ and $|\cB'|\leq 3|\cB|=3|\cP_1|\leq 6k$, thus $|I|\leq 24k^2$ and $|X|\leq 720k^3+4k$.

  We now show that $X$ separates $V(P_1)$ from $V_b$ in $G$. Suppose, for contradiction, that there exists a path $R$ in $G-X$ connecting a vertex $v$ from $V_b$ to a vertex $u$ on $P_1$. Among those paths, choose a path  $R$ with minimum length. Since $V(P_1)\cap V_b\subseteq X_0\subseteq X$, we have $v \notin V(P_1)$, thus $R$ has length at least $1$. Also, since $R$ is a path of minimum length connecting $V_b$ to $P_1$, every vertex in $R-v$ must be good. Since every vertex in $R-v$ is good, there is no bad edge in $R$. Hence, as $\cP$ edge-covers $G$, all the edges of $R$ must be covered by the paths from $\cB$, implying that $R$ is also a path of $G_0$.
  
 This implies in particular that $v\in V(G_0)\cap V_b$. Thus by definition of $G_0$ and $V_b$, there exist some paths $P\in \cB$ and $A\in \cA\cup \cP_2$ such that $v\in V(P)\cap V(A)$. 
 \Cref{lem: keyP1} then implies that $R$ intersects $X_{A,P}\subseteq X$, giving a contradiction.
\end{proof} 

\subsection{Proof of \texorpdfstring{\Cref{lem: keyP1}}{Lemma~\ref{lem: keyP1}}}
\label{sec: key}
We fix some path $P$ in $G_0$ which is parallel to $P_1$, and some path $A\in \cA\cup \cP_2$ which intersects $P$. We let $Q$ denote the minimal subpath of $P$ containing all vertices in $V(A)\cap V(P)$, and let $b,c$ denote the endvertices of $Q$ such that $b\leq_1 c$. We will in fact prove that one can find a set $X_{A,P}$ of vertices with the desired size that separates in $G_0$ all the vertices of $Q$ from $P_1$.
By \Cref{rem: G0}, $G_0$ is coverable by at most $6k$ paths which are all parallel to $P_1$. In particular, for every $i \geq 0$, the set $V_i:=\sg{v\in V(G_0): d_G(v,a_1)=i}$ contains at most $6k$ vertices, and separates in $G_0$ the sets $\bigcup_{j\leq i} V_j$ and $\bigcup_{j\geq i}V_j$.
We also may assume without loss of generality that $Q$ contains at least $2$ vertices, hence $b<_1c$ (otherwise, we conclude by choosing $X_{A,P} := V(Q) = \sg{b}$).

For every $v\in V(G_0)$, we let $\iota(v)$ denote the unique index $i\geq 0$ such that $v\in V_{i}$. For every path $L$ in $G_0$ which is parallel to $P_1$, if $u,v$ denote the endvertices of $L$ with $u\leq_1v$, we define the \emph{projection} of $L$ on $P_1$ as the set $U_L:=\sg{x\in V(P_1): \iota(u)<\iota(x)<\iota(v)}$. We say that $L$ is \emph{$\nearrow$-free} if there does not exist in $G_0$ a path $R$ parallel to $P_1$ that connects a vertex $x\in U_L$ to a vertex $y\in V(L)$ with $x\leq_1 y$ (see \Cref{fig: fleche-free}). Symmetrically, we say that $L$ is \emph{$\searrow$-free} if there does not exist in $G_0$ a path $R$ parallel to $P_1$ that connects a vertex $x\in U_L$ to a vertex $y\in V(L)$ with $x\geq_1 y$. 
Our main result in this subsection will be \Cref{lem: left-to-right}, which states that if $Q$ is $\nearrow$-free or $\searrow$-free, then we can separate $Q$ from $P_1$ in $G_0$ after removing $30k$ vertices. Before proving this, we will first show in the next two lemmas that $Q$ can be covered by at most two subpaths which are each $\nearrow$-free or $\searrow$-free.

\begin{figure}[htb]
  \centering  
  \includegraphics[scale=0.8]{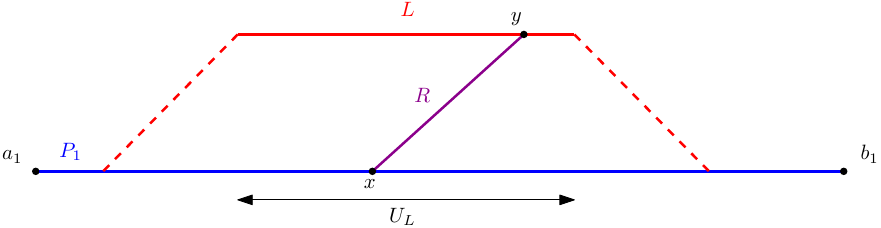}
  \caption{The path $L$ (in red) is not $\nearrow$-free.}  
  \label{fig: fleche-free}
\end{figure}

\begin{lemma}
\label{lem: QinA}
 If $A\in \cA$, then $Q$ is $\nearrow$-free or $\searrow$-free.
\end{lemma}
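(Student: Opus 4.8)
Since $\cA=\{A_P^{-1}:P\in\cP_1\}\cup\{B_P:P\in\cP_1\}$, and relabeling the two endpoints $a_1,b_1$ of $P_1$ while reversing $\hat P$ turns a path of the form $A_{\hat P}^{-1}$ into one of the form $B_{\hat P}$ — an operation that merely exchanges $\leq_1$ with $\geq_1$, hence $\nearrow$-freeness with $\searrow$-freeness, leaving the statement of the lemma invariant — I would assume $A=B_{\hat P}$ for some $\hat P\in\cP_1$. Recall that $b<_1 c$ are the endvertices of $Q$, that $\abs{V(Q)}\ge 2$, and that $Q$ is the minimal subpath of $P$ containing $V(A)\cap V(P)$; since no endvertex of $Q$ can be removed, $b,c\in V(A)\cap V(P)=V(B_{\hat P})\cap V(P)$, so $B_{\hat P}$ has at least two vertices and the vertex $w$ of $\hat P$ following $v_{\hat P}$ towards the far end of $B_{\hat P}$ exists. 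I would then record three facts. First, since $Q\subseteq P$ and $P$ is parallel to $P_1$, the path $Q$ is parallel to $P_1$; in particular $Q$ is a shortest $bc$-path with exactly one vertex at each distance from $a_1$ between $d(a_1,b)$ and $d(a_1,c)$. Second, by \Cref{clm: parallel-hereditary} every shortest $bc$-path is parallel to $P_1$; applying this to the shortest $bc$-path $B_{\hat P}[b,c]$ shows that this subpath of $B_{\hat P}$ is parallel to $P_1$ too, hence is $\leq_1$-monotone with one vertex per distance from $a_1$. Third, by the maximal choice of $v_{\hat P}$ the shortest $b_{\hat P}w$-path $\hat P[b_{\hat P},w]$ is not parallel to $P_1$, so by the contrapositive of \Cref{clm: parallel-hereditary} \emph{no} $b_{\hat P}w$-path of $G$ is parallel to $P_1$.

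The plan is then to argue by contradiction. Suppose $Q$ is neither $\nearrow$-free nor $\searrow$-free, and fix witnessing paths $R_\nearrow$ and $R_\searrow$ of $G_0$, both parallel to $P_1$: thus $R_\nearrow$ joins some $x_\nearrow\in U_Q\subseteq V(P_1)$ to some $y_\nearrow\in V(Q)$ with $x_\nearrow\leq_1 y_\nearrow$, and symmetrically $R_\searrow$ joins some $x_\searrow\in U_Q$ to some $y_\searrow\in V(Q)$ with $x_\searrow\geq_1 y_\searrow$. Using whichever of these two witnesses the case analysis below calls for, I would splice together the parallel path $\hat P[b_{\hat P},v_{\hat P}]$, a subpath of $P_1$ joining $b_{\hat P}$ to the chosen $x$ (subpaths of $P_1$ are parallel to $P_1$), the witness path, a subpath of $Q$ joining the chosen $y$ to $b$ or to $c$, and a subpath of the parallel path $B_{\hat P}[b,c]$, into a single path from $b_{\hat P}$ to $w$ that is parallel to $P_1$ — contradicting the third fact above. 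Each splicing step is justified by \Cref{lem: parallel-concatenation}: one only needs the chain condition for $\leq_1$, which holds because every piece is $\leq_1$-monotone and two consecutive pieces meet only at the unique distance from $a_1$ at which one switches from one to the next; this also makes the spliced walk a simple path, hence a shortest path, since it is parallel to $P_1$. The case distinction is on whether $\hat P[b_{\hat P},v_{\hat P}]$ is $\leq_1$-increasing, $\leq_1$-decreasing or trivial, and on the position of $v_{\hat P}$ on $B_{\hat P}$ relative to $b$ and $c$ (equal to one of them, strictly between them, or before both of them on $B_{\hat P}$); it is precisely this split that forces the conclusion to be a disjunction rather than a single one of the two properties.

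I expect the main obstacle to be exactly this case analysis: in each configuration one must specify which of $R_\nearrow,R_\searrow$ to use, at which vertex of $P_1$ to enter it, and to which of $b,c$ (and along which of $Q$ or $B_{\hat P}[b,c]$) to return, so that the result is a genuine simple path. Two observations make this lighter. If $\abs{V(Q)}=2$ then $U_Q=\emptyset$, so $Q$ is trivially both $\nearrow$-free and $\searrow$-free and there is nothing to prove. And in several configurations — roughly, those where $v_{\hat P}\in\{b,c\}$ and $\hat P[b_{\hat P},v_{\hat P}]$ points in the favorable direction — the desired parallel $b_{\hat P}w$-path is already the concatenation of $\hat P[b_{\hat P},v_{\hat P}]$ with a single edge of $B_{\hat P}[b,c]$, so no witness is needed; the witnesses are genuinely used only in the remaining configurations.
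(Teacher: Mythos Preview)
Your plan has the right core idea — exploit the maximality of $v_{\hat P}$ to get a non-parallel subpath of $\hat P$, then manufacture a competing parallel path — but the execution has two concrete problems. First, your chosen target $w$ (the vertex just past $v_{\hat P}$) is not in general reachable by your listed pieces: when $v_{\hat P}$ lies at least two steps before both $b$ and $c$ along $B_{\hat P}$, the vertex $w$ is not in $V(B_{\hat P}[b,c])$, and none of $P_1$, the witnesses, $Q$, or $B_{\hat P}[b,c]$ can touch it. (Your case ``$v_{\hat P}$ strictly between $b$ and $c$ on $B_{\hat P}$'' cannot occur, since $v_{\hat P}$ is the first vertex of $B_{\hat P}$; the genuine case ``before both'' is exactly the one that breaks.) The paper sidesteps this by targeting $c$ (more precisely, whichever of $b,c$ is farther from $u_S$ along $A$): since $\hat P[b_{\hat P},w]$ is a subpath of $\hat P[b_{\hat P},c]$, the latter is also non-parallel, and $c$ is already an endpoint of $Q$, hence directly reachable.

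Second, your insistence on \Cref{lem: parallel-concatenation} forces every splice to be $\leq_1$-monotone, and this fails in one subcase: when $b_{\hat P}$ sits on the wrong side of the witness's basepoint $x$ (the paper's second case, $a_S>_1 x$), no monotone concatenation from $b_{\hat P}$ to $c$ is available, even with both witnesses at hand. The paper does not try to build a parallel path there; it simply observes that the \emph{walk} $P_1[a_S,x]\cdot R\cdot Q[y,c]$ is strictly shorter than the shortest path $\hat P[a_S,c]$, a direct length contradiction. Relatedly, the paper never assumes both failures at once: after fixing which of $b,c$ is farther from $u_S$ on $A$, it proves outright that $Q$ is $\nearrow$-free (respectively $\searrow$-free), using a single witness. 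This is both simpler and slightly stronger than what your disjunction-by-contradiction would yield.
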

 
\begin{proof}
We assume first that $A=A^{-1}_S$ for some $S\in \cP_1$, and claim that the case where $A=B_S$ is symmetric. 
We moreover assume that $A[u_S, b]$ is a subpath of $A[u_S, c]$, as depicted in \Cref{fig: left-to-right}, and claim that the case where $A[u_S, c]$ is a subpath of $A[u_S, b]$ is symmetric. 
We show that in this case, $Q$ must be $\nearrow$-free. 
As $P$, and thus $Q$ are parallel to $P_1$, note that every vertex $v\in V(Q)$ satisfies $\iota(v)\in [\iota(b),\iota(c)]$. We let $A'$ denote the subpath of $S^{-1}$ which starts at $a_S$ and contains $A$ as a subpath.

We assume for sake of contradiction that there exists some path $R$ in $G_0$ which is parallel to $P_1$ in $G$, and which connects a vertex $x\in U_Q$ to a vertex $y\in V(Q)$, with $x\leq_1y$.  
We consider two cases according to whether $a_S\leq_1 x$ or not, described in \Cref{fig: left-to-right}.

If $a_S\leq_1 x$, then by \Cref{lem: parallel-concatenation}, the path $P_1[a_S,x]\cdot R\cdot Q[y,c]$ is parallel to $P_1$. On the other hand, recall that  by definition of $u_S$, and as $c\neq u_S$ (this is because $b\neq c$ and $A[u_S, c]$ contains $b$), $A'[a_S, c]$ cannot be parallel to $P_1$.
In particular, $A'[a_S,c]$ must be strictly longer than $P_1[a_S,x]\cdot R\cdot Q[y,c]$, implying a contradiction as $A'$ is a shortest path in $G$.

If $a_S>_1 x$, then as $x\in U_Q$, $x>_1 b$. 
Then, the path $R\cdot Q[y,c]$ is vertical with respect to $a_1$, and thus strictly shorter than $Q$. Note in this case, the path $A'[a_S, b]$ is also strictly longer than $P_1[a_S, x]$. It thus implies that $P_1[a_S, x]\cdot R\cdot Q[y,c]$ is strictly shorter than the path $A'[a_S, c]$, contradicting again that $A'$ is a shortest path in $G$.
\end{proof}

\begin{figure}[htb]
  \centering  
  \includegraphics[scale=0.8, page=1]{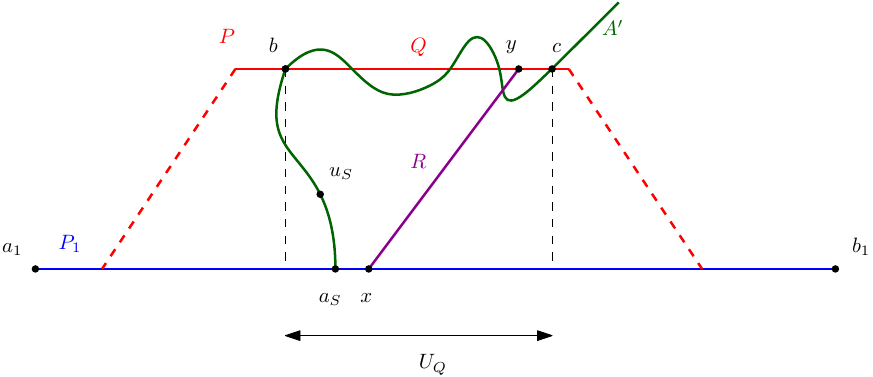}
  \includegraphics[scale=0.8, page=2]{left-to-right}
  \caption{Top: configuration in the proof of \Cref{lem: QinA} when $a_S\leq_1 x$. Bottom: configuration in the proof of \Cref{lem: QinA} when $a_S>_1 x$.}  
  \label{fig: left-to-right}
\end{figure}

\begin{lemma}
\label{lem: QinP2}
 If $A\in \cP_2$, then there exists subpaths $Q_1, Q_2$ such that $Q=Q_1\cdot Q_2$, $Q_1$ is $\searrow$-free and $Q_2$ is $\nearrow$-free.
\end{lemma}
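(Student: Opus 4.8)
The plan is to exploit, for the only time in the whole argument, the hypothesis that $\cP$ is \emph{reduced}. The pivotal remark is that, since $A\in\cP_2$ is a shortest path of $G$ containing both endvertices $b,c$ of $Q$, the subpath of $A$ joining $b$ to $c$ is itself a shortest $b$--$c$ path; and since $Q$ is parallel to $P_1$ with endvertices $b,c$, \Cref{clm: parallel-hereditary} guarantees that \emph{every} shortest $b$--$c$ path of $G$ -- in particular this subpath of $A$ -- is parallel to $P_1$. Together with the facts that $A$ is disjoint from $P_1$ and that $b<_1c$, this exhibits exactly the first half of the definition of a reducing path (with $A$ playing the role of the reducing path and its $b$--$c$ subpath the parallel subpath). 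As $\cP$ is reduced, $A$ is not reducing, so the second half must fail; that is, one obtains the key fact $(\star)$: \emph{no shortest $b$--$c$ path of $G$ intersects $P_1$}. In particular $V(Q)\cap V(P_1)=\emptyset$.

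Next I would choose the cut point. Since $Q$ is parallel to $P_1$, its vertex set forms a chain with respect to $\leq_1$; order $V(Q)$ from $b$ to $c$, and for $v\in V(Q)$ write $Q[b,v]$ for the corresponding prefix of $Q$. Let $c^{*}$ be the $\leq_1$-largest vertex of $Q$ for which $Q[b,c^{*}]$ is $\searrow$-free (well defined, since $Q[b,b]$ has empty projection and is vacuously $\searrow$-free). Set $Q_1:=Q[b,c^{*}]$ and $Q_2:=Q[c^{*},c]$, so that $Q=Q_1\cdot Q_2$ and $Q_1$ is $\searrow$-free by construction. Everything then reduces to proving that $Q_2$ is $\nearrow$-free, which is trivial if $c^{*}=c$. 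Otherwise let $c'$ be the vertex immediately following $c^{*}$ on $Q$, so $\iota(c')=\iota(c^{*})+1$, and note that $c'>_1 c^{*}$ forces, by maximality of $c^{*}$, that $Q[b,c']$ is \emph{not} $\searrow$-free.

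The heart of the proof is then a contradiction. Assume $Q_2$ is not $\nearrow$-free. Then there is a path $R$ of $G_0$ parallel to $P_1$ running from some $x\in U_{Q_2}$ to some $y\in V(Q_2)$ with $x\leq_1 y$; reading off the projection gives $\iota(c^{*})<\iota(x)<\iota(c)$, and since $y\notin V(P_1)$ by $(\star)$ the two ends of $R$ lie in distinct layers, so in fact $\iota(x)<\iota(y)$. Symmetrically, $Q[b,c']$ not being $\searrow$-free yields a path $R'$ of $G_0$ parallel to $P_1$ from some $x'\in U_{Q[b,c']}$ to some $y'\in V(Q[b,c'])$ with $x'\geq_1 y'$; here $\iota(b)<\iota(x')<\iota(c')=\iota(c^{*})+1$, i.e.\ $\iota(b)<\iota(x')\leq\iota(c^{*})$, and $\iota(y')<\iota(x')$. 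Hence
\[
\iota(b)\ \leq\ \iota(y')\ <\ \iota(x')\ \leq\ \iota(c^{*})\ <\ \iota(x)\ <\ \iota(y)\ \leq\ \iota(c),
\]
so in particular $x'$ strictly precedes $x$ on $P_1$. Now concatenate, in order, $Q[b,y']$, the path $R'$ traversed from $y'$ to $x'$, the subpath $P_1[x',x]$, the path $R$ traversed from $x$ to $y$, and $Q[y,c]$: each of these five pieces is a path parallel to $P_1$, and by the displayed chain of inequalities their successive endpoints $b,y',x',x,y,c$ occur in $\leq_1$-nondecreasing order, so four applications of \Cref{lem: parallel-concatenation} show that the concatenation is a single path parallel to $P_1$, hence a shortest path of $G$. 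But this is a shortest $b$--$c$ path of $G$ passing through $x'\in V(P_1)$, contradicting $(\star)$. Therefore $Q_2$ is $\nearrow$-free, and the lemma follows.

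The one genuinely load-bearing idea is the choice of $c^{*}$ as the \emph{last} vertex keeping the initial segment $\searrow$-free, together with the realisation that any failure of the tail $Q_2$ to be $\nearrow$-free would let one ``route through $P_1$'' a shortest $b$--$c$ path -- which reducedness has just forbidden. Beyond that, the only delicate point is the layer bookkeeping: one must verify that the relevant inequalities are strict (using $V(Q)\cap V(P_1)=\emptyset$), so that the five pieces, each being parallel to $P_1$ and hence meeting every layer at most once, glue along their shared endpoints into an honest path; this is precisely what \Cref{lem: parallel-concatenation} is set up to handle. Nothing else in the argument goes beyond a direct appeal to \Cref{clm: parallel-hereditary}, \Cref{lem: parallel-concatenation}, and the definition of a reduced family.
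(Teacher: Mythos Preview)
Your proof is correct and follows essentially the same route as the paper's: both arguments pick a cut point on $Q$, then show that a simultaneous $\searrow$-violation on the left part and a $\nearrow$-violation on the right part would let one concatenate pieces of $Q$, the two violating paths, and a segment of $P_1$ into a shortest $bc$-path through $P_1$, contradicting that $A\in\cP_2$ is not reducing.

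The only difference is a harmless duality in how the cut is chosen. The paper starts from the $\nearrow$ side: it takes the $\leq_1$-maximal $x\in U_Q$ witnessing a $\nearrow$-violation of $Q$, cuts at the vertex $z\in V(Q)$ with $\iota(z)=\iota(x)$, so that $Q_2=Q[z,c]$ is $\nearrow$-free by maximality, and then argues that $Q_1=Q[b,z]$ is $\searrow$-free via the contradiction. You instead start from the $\searrow$ side: you take the $\leq_1$-maximal $c^*$ with $Q[b,c^*]$ $\searrow$-free, and then argue $Q[c^*,c]$ is $\nearrow$-free via the same contradiction. The two choices need not give the same cut vertex, but the contradiction step---gluing five paths parallel to $P_1$ along a $\leq_1$-monotone sequence of junction vertices via \Cref{lem: parallel-concatenation}---is identical in both. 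Your explicit derivation of $(\star)$ from \Cref{clm: parallel-hereditary} and the definition of a reducing path is a bit more detailed than the paper's one-line appeal to reducedness, but the content is the same.
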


\begin{proof}
 If $Q$ is $\nearrow$-free, then we immediately conclude by choosing $Q_1$ to be the path having $b$ as a single vertex, thus we may assume that there exists some path $R$ parallel to $P_1$ connecting some vertex $x\in U_Q$ to a vertex $y\in V(Q)$ such that $x\leq_1 y$. We moreover choose such a path so that $x$ is maximal with respect to $\leq_1$. We let $z$ be the unique vertex of $Q$ such that $\iota(z)=\iota(x)$, and set $Q_1:=Q[b,z], Q_2:=Q[z,c]$.
 By maximality of $x$, note that $Q_2$ must be $\nearrow$-free. We moreover claim that $Q_1$ is $\searrow$-free. Assume for a contradiction that it is not the case, and that there exists a shortest path $R'$ parallel to $P_1$ connecting a vertices $y'\leq_1 x'$, with $y'\in V(Q_1)$ and $x'\in U_{Q_1}$ (see \Cref{fig: down-then-up}). Then the path $T:=Q_1[b,y']\cdot R'\cdot P_1[x',x]\cdot R\cdot Q_2[y,c]$ is a path in $G$ from $b$ to $c$ that intersects $P_1$ and which is parallel to $P_1$. In particular, as $b,c\in V(A)$ the existence of $T$ implies that $A$ is a reducing path belonging to $\cP$, contradicting our assumption that $\cP$ is reduced.
\end{proof}

\begin{figure}[htb]
  \centering  
  \includegraphics[scale=0.8]{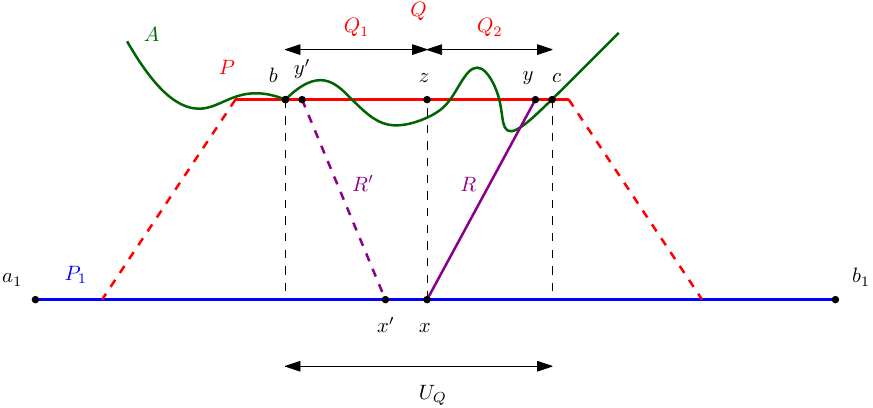}
  \caption{The configuration at the end of the proof of \Cref{lem: QinP2}. The purple dashed path $R'$ represents a shortest path which is parallel to $P_1$, and connects vertices $y'\leq_1 x'$, with $y'\in V(Q_1)$ and $x'\in U_{Q_1}$.}  
  \label{fig: down-then-up}
\end{figure}

The next lemma is the crucial part from the proof of \Cref{lem: keyP1}.

\begin{lemma}
 \label{lem: left-to-right}
 Let $L$ be a path in $G_0$ that is parallel to $P_1$. If $L$ is $\nearrow$-free or $\searrow$-free, then there exists $X\subseteq V(G_0)$ of size at most $18k$ that separates in $G_0$ the vertices of $L$ from the ones of $P_1$.
\end{lemma}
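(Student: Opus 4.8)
\textbf{Plan of the proof of \Cref{lem: left-to-right}.}

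The plan is to treat the case where $L$ is $\nearrow$-free; the $\searrow$-free case then follows by the symmetric argument (reversing the role of $a_1$ and $b_1$, equivalently reversing the order $\leq_1$). Write $u \leq_1 v$ for the endvertices of $L$, and recall that every layer $V_i = \sg{w \in V(G_0) : d_G(w,a_1) = i}$ has size at most $6k$ and separates the ``lower'' part of $G_0$ from the ``upper'' part (by \Cref{rem: G0} together with \Cref{clm: parallel implies layered}). The obvious first move is to take the two layers $V_{\iota(u)}$ and $V_{\iota(v)}$ bracketing $L$: these already cut off everything of $P_1$ that lies strictly below $u$ or strictly above $v$, using $12k$ vertices. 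What remains is to separate $V(L)$ from the projection $U_L = \sg{x \in V(P_1) : \iota(u) < \iota(x) < \iota(v)}$ \emph{inside} the ``slab'' of $G_0$ formed by the layers between $\iota(u)$ and $\iota(v)$, and this is where the $\nearrow$-free hypothesis must be exploited.

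The key observation I would push is this: since $G_0$ is edge-covered by at most $6k$ paths parallel to $P_1$, any connected subgraph of the slab, and in particular the component $D$ of (slab)$\,-\,(V_{\iota(u)}\cup V_{\iota(v)})$ containing a given vertex, meets each intermediate layer in a bounded number of vertices; more importantly, if there were a path in $G_0 - (\text{a small cut})$ from a vertex of $L$ to a vertex $x \in U_L$, one could run it ``monotonically'' and extract from it a path $R$ parallel to $P_1$ witnessing that $L$ is \emph{not} $\nearrow$-free (or, after going down first, not $\searrow$-free, which is excluded in the other case). Concretely: a shortest connecting path, after intersecting with the $\leq_1$-monotone structure and using \Cref{clm: parallel-hereditary} and \Cref{lem: parallel-concatenation}, yields a parallel subpath from some layer-vertex hit by $L$ up to some vertex of $U_L$, contradicting $\nearrow$-freeness — \emph{provided} the connecting path cannot ``escape'' around the endpoints of $L$. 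To forbid that escape we add a third and fourth batch of $6k$ vertices, namely the layers immediately adjacent to $L$'s endpoints, or more cleanly: take $X := V_{\iota(u)} \cup V_{\iota(u)+1} \cup V_{\iota(v)-1} \cup V_{\iota(v)}$, of size at most $4 \cdot 6k = 24k$. Hmm — that overshoots $18k$, so the refinement I would actually aim for is to avoid one of the two ``inner'' layers: by $\nearrow$-freeness the danger is only on the upper side near $v$, so I expect to need only $V_{\iota(u)} \cup V_{\iota(v)-1} \cup V_{\iota(v)}$, giving $18k$ exactly. I would then argue that in $G_0 - X$, no component containing a vertex of $V(L)$ can reach $P_1$: it cannot reach layers below $\iota(u)$ or above $\iota(v)$ (blocked by the two extreme layers), it cannot reach $V(P_1)$ in layer $\iota(v)$ itself (blocked by $V_{\iota(v)-1}$ together with $V_{\iota(v)}$), and it cannot reach a vertex of $U_L$ in an intermediate layer because concatenating the monotone trace of such a path with a subpath of $L$ and a subpath of $P_1$ would produce a parallel path violating $\nearrow$-freeness.

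\textbf{Main obstacle.} The delicate point is the last bullet: turning ``a path in $G_0 - X$ from $V(L)$ to $U_L$'' into ``a path \emph{parallel to $P_1$} from $V(L)$ to $U_L$ with the right $\leq_1$-comparison.'' A generic path in $G_0$ need not be monotone in $\iota$, so one must either (i) replace it by a shortest path between a carefully chosen pair of endpoints and then invoke \Cref{clm: parallel-hereditary} / the fact that every shortest path in $G_0$ lying within a single ``parallel-to-$P_1$ envelope'' is itself parallel, or (ii) extract a monotone subpath by a layer-by-layer peeling argument, using that each layer is small and that $X$ already contains the extreme layers so the subpath stays trapped. I expect option (i) to be cleanest: pick the first vertex $y$ of the offending path on $V(L)$ and the last vertex $x$ on $U_L \subseteq V(P_1)$, note $\iota(u) < \iota(x) < \iota(v)$ and compare $x$ with $y$; in the case $x \leq_1 y$ this is immediately a $\nearrow$-violation via a shortest $xy$-path (which is parallel to $P_1$ by concatenation of $P_1[a_1,x]$, the path, and $L[y,v]$ or $P_1[y?, \cdot]$ being monotone), and in the case $x >_1 y$ one instead manufactures a $\searrow$-violation or, in the $\nearrow$-free case, reroutes to still contradict — this asymmetry is exactly why only the layer $V_{\iota(v)-1}$ (and not $V_{\iota(u)+1}$) is needed, and nailing down that the $x >_1 y$ sub-case genuinely cannot occur once $V_{\iota(v)-1}$ is removed is the part of the argument I'd spend the most care on.
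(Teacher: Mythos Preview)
Your three-layer cut $X = V_{\iota(u)} \cup V_{\iota(v)-1} \cup V_{\iota(v)}$ does \emph{not} separate $L$ from $U_L$, and the difficulty you flag in the last paragraph is not a technicality but a genuine obstruction. Take $L$ running through layers $5,\ldots,15$, take $P_1$ disjoint from $L$ on these layers, and let $Q$ be a single path parallel to $P_1$ whose vertex at layer $8$ equals $L$'s vertex at layer $8$ and whose vertex at layer $13$ equals $P_1$'s vertex at layer $13$ (and $Q$ is otherwise disjoint from $P_1$ and $L$). Set $G_0 := P_1 \cup L \cup Q$. Then $L$ is $\nearrow$-free: any monotone-increasing path in $G_0$ can only switch from $P_1$ to $Q$ at layer $13$ and from $Q$ to $L$ at layer $8$, so no parallel path goes from $U_L$ \emph{up} to $V(L)$. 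Yet your cut removes only layers $5,14,15$, while $Q$ lives entirely in layers $8$--$13$ and still connects $L$ to the vertex of $P_1$ at layer $13\in U_L$. The case ``$x >_1 y$'' that you hoped could not occur once $V_{\iota(v)-1}$ is removed is precisely this one, and no amount of rerouting will produce a $\nearrow$-violation from it, because a $\searrow$-type connection is simply compatible with $\nearrow$-freeness.

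The paper's argument is structurally different: it keeps only the two boundary layers $V_{\iota(b_L)}\cup V_{\iota(c_L)}$ (cost $\leq 12k$) and then, instead of removing further full layers, picks at most $6k$ \emph{individual} vertices $y_1\geq_1 y_2\geq_1\cdots$ by an iterative rightmost-intersection scheme over the $\leq 6k$ parallel paths $Q_1,\ldots,Q_{k'}$ covering $G_0$. One maintains a growing set $W_j = V(L)\cup V(Q_1^-)\cup\cdots\cup V(Q_j^-)$ (the ``lower halves'' below the chosen pivots) and proves by induction that (a) from each $y_j$ there is a parallel path back to $c_L$, so $\nearrow$-freeness forces $W_j\cap U_L=\emptyset$, and (b) the only edges leaving $W_j$ inside the slab go through $\sg{y_1,\ldots,y_j}$. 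In the toy example above this yields $y_1 = L$'s vertex at layer $8$ (the unique intersection of $L$ with $Q$), and removing that single vertex together with the two boundary layers already disconnects $L$ from $P_1$. The point is that connections of the $\searrow$ type are handled not by layer-removal but by cutting them at their \emph{bottom} attachment to (the growing neighbourhood of) $L$.
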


\begin{proof}
 We let $b_L, c_L$ denote the endvertices of $L$, so that $b_L\leq_1 c_L$, and let $X'_0:=V_{\iota(b_L)}\cup V_{\iota(c_L)}$. Recall that by \Cref{rem: G0}, 
 $G_0$ is edge-coverable by $k'$ paths $Q_1, \ldots, Q_{k'}$, all of them parallel to $P_1$ for some $k'\leq 6k$. 
 In particular, it implies that $X'_0$ has size at most $12k$, and separates $L$ from every vertex $x$ of $P_1$ such that $x\leq_1 b$ or $x\geq_1c$. It remains to separate $L$ from $U_L$. We assume without loss of generality that $L$ is $\nearrow$-free, the other case being symmetric, up to exchanging the roles of $a_1$ and $b_1$. We set $Q_0:=L$, and for each $j\in \sg{0,\ldots, k'}$, we let $H_j:= P_1\cup Q_0\cup Q_1\cup \cdots\cup Q_j$. Observe that in particular, $H_0=P_1\cup L$ and $H_{k'}=G_0$.

 Informally, our proof works as follows: the set $X$ separating $P_1$ from $L$ that we construct will be the union of $X'_0$ and of at most one vertex $y_j$ for each path $Q_j$ ($j\in \sg{1,\ldots, k'}$). We will choose the vertices $y_j$ as follows: we start considering the rightmost (i.e. maximal with respect to $\leq_1$) vertex $y_1$ which is in the intersection of $L$ and of some path $Q_{j_1}$ with $j_1\in \sg{1,\ldots, k'}$. Without loss of generality, we may assume that $j_1=1$. Note that by maximality of $y_1$, the set $X'_0\cup \sg{y_1}$ separates the vertices of $L[y_1,c_L]$ from the ones of $P_1$ in $G_0$ (and thus also in $H_1$). 
 Moreover, as $L$ is $\nearrow$-free, none of the vertices from $L\cup Q_{1}$ which are smaller or equal to $y_1$ with respect to $\leq_1$ can belong to $P_1$, hence $X'_0\cup \sg{y_1}$ separates the vertices of $L\cup Q_1$ from the ones of $P_1$ in $H_1$. 
 In particular, observe that if none of the paths $Q_2,\ldots, Q_{k'}$ contains a vertex from $L\cup Q_1$ which is smaller or equal to $y_1$ with respect to $\leq_1$, then it means that $X'_0\cup \sg{y_1}$ separates $L$ from $P_1$ in $G$. We may thus assume that there exists some vertex $y_2\leq_1 y_1$ that belongs to $V(Q_{j_2})\cap V(Q_1\cup L)$ for some $j_2\geq 2$. Again, we choose such a vertex $y_2$ maximal with respect to $\leq_1$, and assume without loss of generality that $j_2=2$. By maximality of $y_2$, the set $X'_0\cup \sg{y_1, y_2}$ separates the vertices of $L$ which are greater or equal to $y_2$ from the ones of $P_1$ in $H_2$, and as $L$ is $\nearrow$-free, one can check that none of the vertices from $L\cup Q_{1}\cup Q_2$ which are smaller than $y_2$ with respect to $\leq_1$ can belong to $P_1$. We then repeat the construction inductively, by choosing at each step some vertex $y_{j+1}\leq_1 y_j$ maximal with respect to $\leq_1$ that belongs to $V(L\cup Q_1\cup \cdots \cup Q_j)\cap V(Q_{i_{j+1}})$ for some $i_{j+1}\geq j+1$ (without loss of generality, we then assume that $i_{j+1}=j+1$), and claim that at each step, the set $X'_0\cup \sg{y_1,\ldots, y_{j+1}}$ separates the vertices of $L$ from the ones of $P_1$ in $H_{j+1}$, and that none of the vertices from $L\cup Q_{1}\cup \cdots \cup Q_{j+1}$ which are smaller than $y_{j+1}$ with respect to $\leq_1$ can belong to $P_1$.
 See \Cref{fig: Induction} for an example.

 \begin{figure}[htb]
  \centering  
  \includegraphics[scale=0.6]{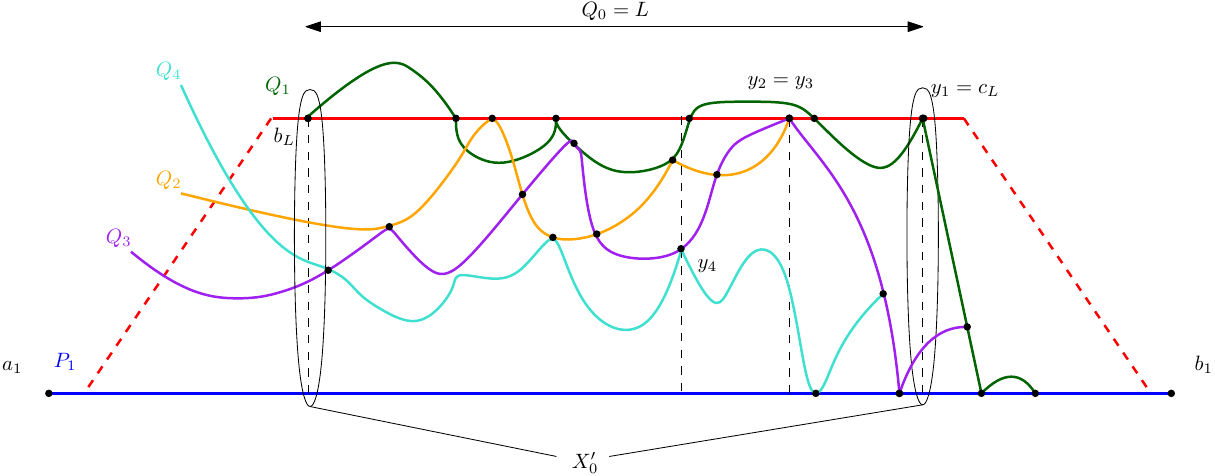}
  \caption{In the following example, as $Q_2$ and $Q_3$ both have their rightmost intersection with $W_1$ at the same vertex, we will have $y_2=y_3$, and we can choose $Q_2$ and $Q_3$ in any order in the proof of \Cref{lem: left-to-right}.}  
  \label{fig: Induction}
 \end{figure}
 
 Formally, we set $y_0:=c_L$ and $W_0:=V(L)$. 
 We construct a sequence of vertices $y_\ell\leq_1\cdots\leq_1 y_0$ and a sequence of sets $W_0, \ldots, W_\ell$ for some $0 \leq \ell \leq k'$.
 These sequences are constructed by induction as follows. 
 Let $j\geq 0$ and assume that we already constructed $y_0, \ldots, y_{j}$ and $W_0,\ldots, W_{j}$ for some $j<k'$, such that, up to permuting the indices of the paths $Q_1,\ldots,Q_{k'}$, for every $j'\leq j$, $y_{j'}\in V(Q_{j'})$. We now let $y_{j+1}$ be any vertex which is maximal with respect to $\leq_1$ such that 
 \begin{itemize}
  \item $y_{j+1}\leq_1 y_j$,
  \item $y_{j+1}\in W_j\cap V\left(\bigcup_{s\geq j+1} Q_s\right)$.
 \end{itemize}
 If no such vertex exists, then we stop the construction here and set $\ell:=j$.
 Otherwise, we let $y_{j+1}$ be such a vertex, and, up to permuting the indices of the paths $Q_{j+1},\ldots, Q_{k'}$, we assume that $y_{j+1}\in V(Q_{j+1})$. 
 We write $Q_{j+1}=Q_{j+1}^-\cdot Q_{j+1}^+$, where $Q_{j+1}^-$ (resp. $Q_{j+1}^+$) denote the subpath of $Q_{j+1}$ containing the vertices which are smaller or equal (resp. greater or equal) to $y_{j+1}$ with respect to $\leq_1$. Note that $Q_{j+1}^-$ and $Q_{j+1}^+$ are internally disjoint and share $y_{j+1}$ as an endvertex. We set $W_{j+1}:=W_j\cup  V(Q_{j+1}^-)$.
 See \Cref{fig: Induction} for an illustration.
 
  We prove by induction over $j\geq 0$ the following properties. 
 \begin{enumerate}[label=(\roman*)]
 \item\label{item: i} There exists a path $R_{j}$ from $y_{j}$ to $c_L$ in $G_0$ which is parallel to $P_1$;
 \item\label{item: ii} for every $y\in W_{j}$, if $y>_1 y_{j}$, then $y\notin V\left(\bigcup_{s\geq j}Q_s\right)$;
 \item\label{item: iii} for every $j'\leq j$, $V(Q_{j'}^+)\cap W_j\subseteq \sg{y_{j'}}$;
 \item\label{item: iv} $W_j\cap U_L=\emptyset$;
 \item\label{item: v} $X'_0\cup \sg{y_1, \ldots, y_j}$ separates $W_j$ from $U_L$ in $H_j$.
\end{enumerate}
 Note that when $j=0$, items \ref{item: i} to \ref{item: v} are immediate (note that \ref{item: iv} holds in this case because $L$ is $\nearrow$-free, and thus, $P_1$ must be internally disjoint from $L$). We now assume that $0\leq j<k'$, and that items \ref{item: i} to \ref{item: v} hold for any $j'\leq j$, and we prove that they are also satisfied for $j+1$. 
 
 We start showing \ref{item: i}. First, as $y_{j+1}\in W_j$, there exists some $0 \leq j'\leq j$ such that $y_{j+1}\in V(Q_{j'})$ and $y_{j+1}\leq_1 y_{j'}$. We consider the path $R_{j'}$ given by induction hypothesis. In particular, both $y_{j+1}$ and $y_{j'}$ belong to $Q_{j'}$, hence by \Cref{lem: parallel-concatenation}, the path $R_{j+1}:=Q_{j'}[y_{j+1}, y_{j'}]\cdot R_{j'}$ is parallel to $P_1$, and connects $y_{j+1}$ to $c_L$ in $G_0$. This proves \ref{item: i}.

 We now show \ref{item: ii}. Assume for sake of contradiction that there exists $y\in W_{j+1}$ such that $y>_1 y_{j+1}$ and $y\in V\left(\bigcup_{s\geq j+1}Q_s\right)$. 
 By definition of $W_{j+1}$, note that if $y>_1 y_j$, then $y\in W_j$. In particular, by induction hypothesis, if $y>_1 y_j$, then $y\notin V\left(\bigcup_{s\geq j}Q_s\right)$, and thus $y\notin V\left(\bigcup_{s\geq j+1}Q_s\right)$. 
 We thus have $y_{j+1}<_1 y\leq_1 y_j$. However, we now obtain a contradiction with the maximality of $y_{j+1}$ with respect to $\leq_1$, as $y\in W_j\cap V\left(\bigcup_{s\geq j+1}Q_s\right)$.

 Let us now prove \ref{item: iii}, that is, that for every $j'\leq j+1$, $V(Q_{j'}^+)\cap W_{j+1}\subseteq\sg{y_{j'}}$. First, note that if $j'\leq j$, as the vertices of $W_{j+1}\setminus W_{j}$ are exactly the internal vertices of $Q_{j+1}^-$, and as $y_{j+1}\leq_1 y_{j'}$, then the only possible vertex of $Q_{j'}^+$ belonging to $W_{j+1}\setminus W_j$ is $y_{j'}$. In particular, by induction hypothesis we immediately deduce that $V(Q_{j'}^+)\cap W_{j+1}\subseteq\sg{y_{j'}}$. It remains to check \ref{item: iii} when $j'=j+1$. Note that it follows from \ref{item: ii}, as every vertex of $Q_{j+1}^+$ which is distinct from $y_{j+1}$ is greater than $y_{j+1}$ with respect to $<_1$.

 To show \ref{item: iv}, assume for sake of contradiction that there exists a vertex $x$ in $U_L\cap W_{j+1}$. By induction hypothesis, $x\notin W_j$, hence $x$ is an internal vertex of $Q_{j+1}^-$. In particular,  $x<_1 y_{j+1}$, thus, as $Q_{j+1}$ is parallel to $P_1$, the subpath $Q_{j+1}[x, y_{j+1}]$ must be parallel to $P_1$. We let $R_{j+1}$ be the path from $y_{j+1}$ to $c_L$ given by \ref{item: i}. In particular, as $y_{j+1}\leq_1 c_L$, \Cref{lem: parallel-concatenation} implies that the path $Q_{j+1}[x, y_{j+1}]\cdot R_{j+1}$ is also parallel to $P_1$, and connects $x\in U_L$ to $c_L\in V(L)$, with $x\leq_1 c_L$, contradicting that $L$ is $\nearrow$-free.
 
 We now prove \ref{item: v}, i.e., that $X'_0\cup\sg{y_1,\ldots, y_{j+1}}$ separates $W_{j+1}$ from $U_L$ in $H_{j+1}$. Assume for sake of contradiction that this is not the case. In particular, as by \ref{item: iv}, $W_{j+1}\cap U_L=\emptyset$, there should exist an edge $yy'$ in $H_{j+1}-(X'_0\cup \sg{y_1,\ldots, y_{j+1}})$, with $y\in W_{j+1}$ and $y'\notin W_{j+1}$. By definition of $H_{j+1}$ and $W_{j+1}$, as the edge $yy'$ does not belong to $W_{j+1}$, it should be an edge of some path $Q_{j'}^+$, for some $1\leq j'\leq j+1$. In particular, \ref{item: iii} then implies that $y=y_{j'}$, a contradiction as $R$ avoids $\sg{y_1, \ldots, y_{j'}}$. 
 \smallbreak
 
 We now conclude the proof of \Cref{lem: left-to-right}. By definition of $\ell$, the sequence $y_0,\ldots,y_{\ell}$ cannot be extended further, that is, for every $\ell<s\leq k'$, $W_{\ell}$ is disjoint from $\sg{y\in V(Q_{s}): y\leq_1 y_{\ell}}$. We set 
 $X:=X'_0\cup \sg{y_1,\ldots, y_{\ell}}$. By \ref{item: v}, $X$ separates $W_{\ell}$, and thus $L$ from $U_L$ in $H_{j+1}$. It remains to show that $X$ also separates $W_{\ell}$ from $U_L$ in $G_0$. 
 Assume for sake of contradiction that it is not the case. In particular, as $Q_1,\ldots, Q_{k'}$ edge-cover $G_0$, there would 
 exist an edge $yy'\in E(Q_s)$ for some $s>\ell$, and some vertices $y\in W_{\ell}$ and $y'\in V(G_0)\setminus W_{\ell}$, such that $b_L\leq_1 y\leq_1 c_L$ and $b_L\leq_1y '\leq_1 c_L$. By our above remark, we have $y>_1y_{\ell}$. Note that we then immediately obtain a contradiction with \ref{item: ii}. This implies that $X$ indeed separates $L$ from $P_1$ in $G_0$.
 Finally, observe that 
 $$|X|\leq |X'_0|+k'\leq 3k'\leq 18k$$
\end{proof}

\begin{proof}[Proof of \Cref{lem: keyP1}]
 By \Cref{lem: QinA,lem: QinP2}, there exist two subpaths $Q_1, Q_2$ of $Q$ such that $Q=Q_1\cdot Q_2$, and such that for each $i\in \sg{1,2}$, $Q_i$ is either $\nearrow$-free or $\searrow$-free. In particular, by \Cref{lem: left-to-right}, there exist two sets $X_1, X_2$ of size at most $18k$ such that for each $i\in \sg{1,2}$, $X_i$ separates in $G_0$ the set $V(Q_i)$ from $V(P_1)$. In particular, the set $X:=X_1\cup X_2$ then separates $V(Q)$ from $V(P_1)$. Moreover, we claim that when going through the proof of \Cref{lem: left-to-right}, one gets that $X_1$ and $X_2$ both contain a common layer $V_{i}$ (for $i:=\iota(c_{Q_1})=\iota(b_{Q_2})$), allowing us to slightly reduce our upper bound on $|X_1\cup X_2|$ to obtain 
 $|X_1\cup X_2|\leq 30k$ (instead of $36k$).
\end{proof}

\section{Exact Bounds for \texorpdfstring{$\boldsymbol{k\leq 3}$}{k <= 3}}
\label{sec: 3paths}
In this section, we prove some optimal upper-bounds on the pathwidth of graphs that are edge-coverable by $k$ shortest paths, when $k\in \sg{2,3}$.

\mainthreepaths*

\subsection{Two Paths}
 As a warm-up, we start describing the structure of graphs edge-coverable by $2$ shortest paths, implying an immediate proof of \Cref{thm: 3-paths} when $k=2$.
 
\begin{definition}
\label{def: skewer}
A \emph{skewer} is a graph $G$ for which there exists a sequence of pairwise distinct vertices $x_1, \ldots, x_{\ell}$ ($\ell\geq 1$) such that: 
\begin{itemize}
 \item for each $i\in \sg{1, \ldots, \ell-1}$, the vertex $x_i$ is connected to the vertex $x_{i+1}$ by two internally disjoint paths $Q_i, R_i$ having the same length;
 \item for each $i\in \sg{1, \ldots, \ell-1}$, the paths $Q_i, R_i$ are separated from $G-(Q_i\cup R_i)$ by the set $\sg{x_i, x_{i+1}}$;
 \item  there exist four internally disjoint paths  $Q_0, R_0, Q_{\ell}, R_{\ell}$ (which do not necessarily have the same length), such that $Q_0, R_0$ (resp.\ $Q_{\ell},R_{\ell}$) are separated from $G-(Q_0\cup R_0)$ (resp.\ $G-(Q_\ell\cup R_{\ell})$) by the set $\sg{x_1}$ (resp.\ $\sg{x_\ell}$).
\end{itemize}
\end{definition}
 See \Cref{fig: skewer} for an illustration of a skewer.
 For each $i\in \sg{0, \ldots, \ell}$, the graph $C_i:=Q_i\cup R_i$ is called the \emph{piece} of $x_i$. Note that we allow the paths $Q_i$ and $R_i$ to have length $1$, in which case the graph $C_i$ is just the edge $x_ix_{i+1}$ (recall that the graphs we consider are simple), while if $Q_i$ and $R_i$ have length at least $2$, then $C_i$ is an even cycle.  
 Note that every skewer $G$ has a $2$-layering, and thus pathwidth at most $2$, and that moreover, for every $i\in \sg{0,\ldots, \ell}$, and every pair of vertices $a,b$ with $a\in V(Q_i)$ and $b\in V(R_i)$, there exists a path decomposition of $G$ of width $2$ in which there exists a bag equals to $\sg{a,b}$.

 \begin{figure}[htb]
    \centering  
    \includegraphics[scale=1]{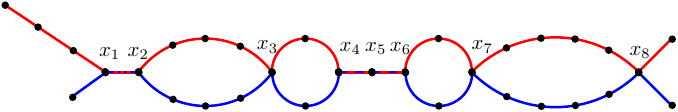}
    \caption{A skewer. The blue paths represent the paths $R_i$ and the red paths represent the paths $Q_i$.}
    \label{fig: skewer}
\end{figure}

\begin{lemma}
\label{clm: ADN}
 Let $P,Q$ be two shortest paths in a graph $G$. Then there cannot exist three vertices $a,b,c\in V(P)\cap V(Q)$ such that $b$ is an internal vertex of $P[a,c]$ and such that $c$ is an internal vertex of $Q[a,b]$.
\end{lemma}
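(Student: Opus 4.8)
The plan is to argue by contradiction, exploiting the single fact that every subpath of a shortest path is again a shortest path, so that the length of a subpath of $P$ or $Q$ between two of its vertices equals their distance in $G$.

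Suppose, for contradiction, that such vertices $a,b,c \in V(P)\cap V(Q)$ exist. Since a path visits its vertices in a fixed linear order and $b$ is an internal vertex of $P[a,c]$, the vertices $a,b,c$ occur on $P$ in this order; hence $P[a,c]$ is a subpath of $P$, and therefore a shortest $ac$-path, passing through $b$. This yields $d_G(a,c) = d_G(a,b) + d_G(b,c)$. Symmetrically, since $c$ is an internal vertex of $Q[a,b]$, the vertices occur on $Q$ in the order $a,c,b$, so $Q[a,b]$ is a shortest $ab$-path passing through $c$, giving $d_G(a,b) = d_G(a,c) + d_G(c,b)$.

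Adding the two identities (or substituting one into the other) gives $d_G(b,c) + d_G(c,b) = 0$, hence $d_G(b,c)=0$ and so $b=c$. But $b$ being an internal vertex of $P[a,c]$ forces $b \neq c$, a contradiction, which completes the proof.

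I do not expect any real obstacle here: the only points requiring a moment's care are that ``internal vertex of $P[a,c]$'' genuinely determines the order of $a,b,c$ along $P$ (true because a path is by definition a sequence of distinct vertices), and the legitimacy of replacing path-lengths by graph distances (this is precisely the hereditary property of shortest paths). No case analysis, and no use of the hypotheses beyond $P$ and $Q$ being shortest paths, is needed.
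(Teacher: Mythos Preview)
Your proof is correct and follows essentially the same approach as the paper: both argue by contradiction using that subpaths of shortest paths realize graph distances. The paper's version is marginally more concise, deducing directly the strict inequalities $d_G(a,b) < d_G(a,c)$ and $d_G(a,c) < d_G(a,b)$ rather than writing out the additive identities, but the underlying idea is identical.
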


\begin{proof}
    Assume for the sake of contradiction that there exist three such vertices $a, b,c \in V(P) \cap V(Q)$. 
    Since $b$ is an internal vertex of $P[a,c]$, $\dist(a,b) < \dist(a,c)$. 
    But since $c$ is an internal vertex of $Q[a,b]$, $\dist(a,c) < \dist(a,b)$, a contradiction.
\end{proof}

\begin{proposition}
 \label{prop: skewer}
 Every connected graph $G$ which is edge-coverable by two isometric paths is a skewer. In particular, $\pw(G)\leq 2$. 
\end{proposition}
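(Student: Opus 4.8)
The plan is to let $P$ and $Q$ be the two isometric paths that edge-cover $G$, and to analyze the structure of their intersection $V(P)\cap V(Q)$. First I would note that since $G$ is connected and $E(G)=E(P)\cup E(Q)$, the paths $P$ and $Q$ must share at least one vertex (otherwise $G$ would be disconnected, as every edge lies on one of the two paths). Consider $I:=V(P)\cap V(Q)$, the set of common vertices; this is where the two paths can ``cross'' and where the cut vertices $x_i$ of the skewer should come from. The key structural input is \Cref{clm: ADN}: it forbids the configuration where two shared vertices appear in one cyclic order along $P$ and in the ``crossed'' order along $Q$. Informally, this says $P$ and $Q$ traverse their common vertices in a compatible (up to reversal) manner, so $I$ inherits a linear order $x_1,\dots,x_\ell$ that is simultaneously the order of these vertices along $P$ (after possibly reversing $P$) and along $Q$ (after possibly reversing $Q$).

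Once that linear order is established, I would argue that between consecutive common vertices $x_i$ and $x_{i+1}$, the path $P$ contributes one $x_ix_{i+1}$-subpath $P[x_i,x_{i+1}]$ and $Q$ contributes another, $Q[x_i,x_{i+1}]$, and these two subpaths are internally vertex-disjoint (any internal common vertex would be an element of $I$ strictly between $x_i$ and $x_{i+1}$ in the order, contradicting consecutiveness). Moreover both are shortest paths between $x_i$ and $x_{i+1}$ in $G$ — here is where isometry is used: a subpath of an isometric path is a shortest path in $G$ — so they have the same length; set $Q_i:=P[x_i,x_{i+1}]$ and $R_i:=Q[x_i,x_{i+1}]$. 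The parts of $P$ and $Q$ lying ``before $x_1$'' and ``after $x_\ell$'' give the two end-pieces $C_0$ and $C_\ell$ (with $x_0:=x_1$, $x_{\ell+1}:=x_\ell$ in the notation of \Cref{def: skewer}, or rather indexed so that $C_0$ hangs off $x_1$ and $C_\ell$ hangs off $x_\ell$), which need not have equal-length sides. Finally I would verify the separation conditions: the piece $C_i=Q_i\cup R_i$ is separated from the rest of $G$ by $\{x_i,x_{i+1}\}$ because every edge of $G$ outside $C_i$ lies on a portion of $P$ or $Q$ on the other side of $x_i$ or $x_{i+1}$, and such a portion meets $C_i$ only at the endpoints $x_i,x_{i+1}$; the edge case where $Q_i,R_i$ both have length one collapses to a single edge, which is fine since $G$ is simple. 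The ``In particular'' clause then follows immediately because every skewer has a $2$-layering (take layers indexed by distance, appropriately), so \Cref{lem: k-layered} gives $\pw(G)\le 2$; alternatively this is already recorded in the paragraph after \Cref{def: skewer}.

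The main obstacle I anticipate is the careful bookkeeping in deriving the common linear order on $I$ from \Cref{clm: ADN} and handling degenerate situations: a common vertex could be an endvertex of $P$ but internal to $Q$ (or vice versa), the two paths might share an endvertex, or $I$ might consist of a single vertex (in which case $\ell=1$ and $G$ is just the union of the two end-pieces at $x_1$). One must check that \Cref{clm: ADN} indeed rules out all ``interleaved'' patterns and not merely the single pattern stated — this requires observing that if $a<b<c$ along $P$ but the order along $Q$ is not $a,b,c$ (nor its reverse), then some triple of these three vertices realizes the forbidden configuration of \Cref{clm: ADN}, possibly after relabeling and using that $\dist$ is symmetric. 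A further subtlety: one should confirm that no vertex of $G$ is missed — every vertex lies on $P$ or on $Q$ since the paths edge-cover $G$ and $G$ has no isolated vertex (it is connected with at least one edge, or a single vertex, a trivial case). Assembling these observations into the clean statement of \Cref{def: skewer} is routine but must be done with care about indices and the end-pieces.
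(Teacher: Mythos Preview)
Your proposal is correct and follows essentially the same approach as the paper: order the common vertices of the two paths along $P_1$, invoke \Cref{clm: ADN} to conclude they appear in the same order along $P_2$, and read off the skewer structure from the resulting separations. Your plan is in fact more explicit than the paper's own (three-sentence) proof, which leaves the bookkeeping you flag---equal lengths of the two $x_ix_{i+1}$-subpaths, internal disjointness, and the triple-by-triple derivation of the common order from \Cref{clm: ADN}---entirely to the reader.
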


\begin{proof}
 We let $P_1, P_2$ be two isometric paths that edge-cover $G$, and let $x_1, \ldots, x_{\ell}$ denote the vertices from $V(P_1)\cap V(P_2)$, such that for each $i<j$, $x_i$ is before $x_j$ on $P_1$. By \Cref{clm: ADN}, for each $i<j$, $x_i$ is also before $x_j$ on $P_2$. It thus implies that each $x_i$ separates in $G$ all vertices appearing before it in $P_1$ and $P_2$ from all vertices appearing after it in $P_1$ and $P_2$. In particular, it easily follows that $G$ is a skewer.
\end{proof}

\subsection{Three Paths}
This subsection consists in a proof of \Cref{thm: 3-paths} when $k=3$, which we restate here for convenience.

\begin{theorem}
    Let $G$ be a graph which is edge-coverable by $3$ isometric paths. Then $\pwr{G} \leq 3$. 
\end{theorem}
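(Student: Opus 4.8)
The plan is to generalize the skewer analysis from the two-path case, but the combinatorics of three mutually overlapping shortest paths is genuinely more delicate, so I would first set up the right notion of ``cut vertices'' coming from the three paths. Let $P_1,P_2,P_3$ be isometric paths that edge-cover a connected $G$. By \Cref{clm: ADN}, any two of these paths intersect in a set of vertices that occurs in the \emph{same} linear order along both paths; so each pairwise intersection $V(P_i)\cap V(P_j)$ is a chain both in $P_i$ and in $P_j$. The key structural idea is to look at the vertices lying on all three paths, $T:=V(P_1)\cap V(P_2)\cap V(P_3)$, together with the ``private'' and ``doubly-covered'' portions of each path. Each triple-point $t\in T$ is a cut vertex of $G$ that separates everything before $t$ (on all three paths simultaneously) from everything after $t$. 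The plan is to show that between two consecutive triple-points, and outside the outermost two, the relevant piece of $G$ is covered by at most three shortest paths but of a particularly restricted shape (each of the three paths visits the piece in one ``pass''), and to build a width-$3$ path-decomposition piece by piece and glue along the cut vertices in $T$.

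Concretely, I would proceed as follows. First, order $T=\{t_1,\dots,t_m\}$ by their common position on $P_1$ (equivalently on $P_2$, $P_3$). Second, argue that each $t_j$ is a genuine $1$-separator: since all three paths pass through $t_j$ in the same order and they edge-cover $G$, every edge of $G$ lies on a path whose $t_j$-side is determined, so $G-t_j$ has no edge joining the ``left'' part to the ``right'' part. This reduces the theorem, by the standard gluing lemma for path-decompositions (and \Cref{rem: X} / \Cref{lem: k-layered}-type reasoning), to proving a width-$3$ decomposition for each ``block'': the subgraph $G_j$ induced between consecutive triple-points $t_j,t_{j+1}$ (with boundary $\{t_j,t_{j+1}\}$), and the two end-blocks. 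Third, inside a block, the three paths restrict to subpaths each meeting the block in a single interval (otherwise a path would leave and re-enter through the cut vertex, impossible); moreover no vertex of the block lies on all three of these subpaths. So a block is edge-covered by three shortest paths whose mutual intersections are ``small'' in a controllable way. Fourth, for such a block I would build the decomposition via the vertex-separation-number characterization (\Cref{thm: Kinnersley}): order the vertices of the block by, say, distance along $P_1$ within the block, breaking ties carefully between the other two paths, and check that at most $4$ vertices ever straddle the cut — two coming from $P_1$ itself is impossible since $P_1$ is a path contributing one straddling vertex, and one each from $P_2,P_3$, but one must verify that the two ``doubly covered'' overlaps cannot force a fifth; the bookkeeping here mirrors the $A,B,C$ argument inside \Cref{lem: k-layered}.

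I would organize the proof around a ``normal form'': up to replacing a path $P_i$ by a subpath between its first and last triple-point and moving the leftover tails into separate small gadgets (this is exactly the spirit of the $\cA,\cB$ decomposition in \Cref{sec: poly}, but here done by hand for $k=3$), every block is the union of at most three shortest paths, each traversing the block monotonically, pairwise meeting in contiguous segments that again satisfy \Cref{clm: ADN}. The upshot should be that a block either is a ``theta-like'' graph built from two or three internally disjoint $t_jt_{j+1}$-paths of equal length (a direct $2$- or $3$-layering, hence pathwidth $\le 3$ by \Cref{lem: k-layered} and \Cref{clm: parallel implies layered}), possibly with a few pendant paths of the end-block type attached at the boundary vertices, all of which can be absorbed into an extremity bag since \Cref{lem: k-layered} lets us prescribe the extremities of the block decomposition. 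Then glue consecutive blocks: since the decomposition of $G_j$ can be required to have $\{t_j\}$ (or a bag containing $t_j$) at its left extremity and $\{t_{j+1}\}$ at its right extremity, concatenating the block decompositions in order $t_1,t_2,\dots,t_m$ yields a valid path-decomposition of $G$ of width $\le 3$.

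The main obstacle I expect is the case analysis for a single block: controlling exactly how the three monotone subpaths overlap. With two paths (the skewer case) the overlap pattern is forced to be ``a chain of shared points, shared intervals between them'', which immediately gives width $2$; with three paths one has to rule out or handle configurations where, between two triple-points, two of the paths share one interval while the third path weaves in and out of the shared region — here \Cref{clm: ADN} applied to each pair constrains the weaving, but combining the three pairwise constraints into a single ``the block is covered by $\le 4$ vertices at every cut'' statement is where the real work lies. A secondary subtlety is making the gluing at triple-points clean when a triple-point is an endpoint of one of the paths, or when two triple-points coincide with the endpoints of a single path, forcing an end-block; I would handle these as degenerate cases of the block analysis rather than separately. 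I would also double-check tightness is not needed here (the lower bound $\pw=k$ for $k\le 3$ is cited from \cite{DFPT24}), so the task is purely the upper bound $\pw(G)\le 3$.
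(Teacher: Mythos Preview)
Your strategy is to cut $G$ at the triple-points $T=V(P_1)\cap V(P_2)\cap V(P_3)$ and analyse the resulting blocks, but this gets the case split exactly backwards. When $|T|\ge 2$ no block decomposition is needed: picking $u,v\in T$ with $a_i,u,v,b_i$ in this order on every $P_i$, one checks (via \Cref{clm: ADN} and a two-line distance computation) that $u$ already separates all the $A_i:=P_i[a_i,u]$ from all the $B_i:=P_i[u,b_i]$, and the BFS layers from $u$ form a single global $3$-layering of $G$ --- this is the paper's \Cref{lem:two common points}. The genuinely hard case, on which the paper spends almost all of the subsection, is $|T|\le 1$. Here your method has at most one point to cut at; and with a lone triple-point $t$ your assertion that $t$ ``separates everything before $t$ from everything after $t$'' is false, because a vertex $w\in V(P_i)\cap V(P_j)\setminus T$ can sit before $t$ on $P_i$ and after $t$ on $P_j$ without violating \Cref{clm: ADN} (there is no second triple-point to anchor the pairwise orders). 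When $T=\emptyset$ there is nothing to cut at and the whole graph is a single ``end-block'', for which your outline offers only the vague promise that vertex-separation bookkeeping ``mirrors'' \Cref{lem: k-layered}.

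The paper handles $|T|\le 1$ by a different mechanism: it fixes a \emph{pairwise} intersection point $u_0\in V(P_1)\cap V(P_2)\setminus V(P_3)$, uses the skewer structure of $P_1\cup P_2$ (\Cref{prop: skewer}) to split $P_1,P_2$ into pieces $A_1,B_1,A_2,B_2$ around $u_0$, and then runs a four-way case analysis on how $P_3$ threads through $A_2,B_1,B_2$ (\Cref{lem:alternatively intersect,lem:not intersect both,lem:intersection semi alternatively,lem:intersect twice then once}), the ``bouncing'' case being the most delicate. None of this structure is visible from the triple-point viewpoint. Separately, your ``normal form'' claim that a block between consecutive triple-points is a theta of internally disjoint equal-length paths is wrong: the three subpaths $P_i[t_j,t_{j+1}]$ have equal length but may share internal vertices pairwise (only a common \emph{triple} intersection is excluded); the block still has pathwidth $\le 3$, but via the BFS-from-$t_j$ layering, not because it is theta-like.
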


In the remainder of the subsection, we let $G$ denote a graph which is edge-coverable by three shortest paths $P_1, P_2, P_3$. We also assume that for each $i\in \sg{1,2,3}$, $P_i$ is a $a_ib_i$-path for some vertices $a_i,b_i\in V(G)$. Moreover, note that if the $P_i$'s do not all belong to the same connected component of $G$, then every connected component of $G$ is coverable by at most $2$ shortest paths, hence by \Cref{prop: skewer}, $\pw(G)\leq 2$. We thus moreover assume that $G$ is connected.

For two sequences $\cP_1 = (X_1, \dots, X_k)$ and $\cP_2 = (Y_1, \dots, Y_\ell)$ of vertex subsets such that $X_k = Z = Y_\ell$, we say that $\cP = (X_1, \dots, X_k, Z, Y_1, \dots Y_\ell)$ is \emph{the glue of}$\cP_1$ and $\cP_2$ \emph{by the bag} $Z$. The following three lemmas deal with some first easy cases.

\begin{lemma}\label{cl: 3-paths intersects}
    If the intersection of two of the paths $P_1, P_2, P_3$ is included in the third one, then $\pw(G) \leq 3$.
\end{lemma}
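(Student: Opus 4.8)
The plan is to reduce this case to the two-paths setting (\Cref{prop: skewer}) by exploiting the hypothesis. Say, without loss of generality, that $V(P_1)\cap V(P_2)\subseteq V(P_3)$. The idea is that every vertex of $V(P_1)\cap V(P_2)$ lies on $P_3$, so the "skewer structure'' of $P_1\cup P_2$ can be threaded along $P_3$; more precisely, I would argue that the vertices of $V(P_1)\cap V(P_2)$ are separators in $P_1\cup P_2$ (by \Cref{clm: ADN}, they appear in the same order on $P_1$ and on $P_2$, so $P_1\cup P_2$ is a skewer with these vertices as its distinguished vertices $x_1,\dots,x_\ell$, as in the proof of \Cref{prop: skewer}).

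First I would set up notation: let $Z:=V(P_1)\cap V(P_2)$, write $Z=\sg{x_1,\dots,x_\ell}$ ordered along $P_3$ (so also along $P_1$ and $P_2$ by \Cref{clm: ADN}), and let $C_0,\dots,C_\ell$ be the pieces of the skewer $P_1\cup P_2$, with $C_i$ the piece between $x_i$ and $x_{i+1}$ for $1\le i\le\ell-1$ and $C_0,C_\ell$ the end pieces. By the remark following \Cref{def: skewer}, each piece $C_i$ admits a path-decomposition of width at most $2$, and for prescribed vertices $a\in V(Q_i)$, $b\in V(R_i)$ there is one with a bag equal to $\sg{a,b}$. Now I would build a path-decomposition of $G$ of width at most $3$ by gluing: walk along $P_3$ from $a_3$ to $b_3$; for the portion of $P_3$ between two consecutive elements $x_i,x_{i+1}$ of $Z$ (or before $x_1$, or after $x_\ell$), take the obvious width-$1$ path-decomposition of that subpath of $P_3$ and add the two "anchor'' vertices $x_i,x_{i+1}$ (resp. one anchor) to every bag, getting width at most $3$; interleave, at the appropriate place, the width-$2$ decomposition of the piece $C_i$ with each bag augmented by the single vertex $x$ of $P_3$ currently "passing through'' — but more cleanly, I would instead glue the piece decompositions at the bags containing $x_i$, since $x_i\in V(P_3)$, and use the gluing operation defined just above the lemma.

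More concretely, for each $i$ choose in the width-$2$ decomposition of $C_i$ a bag $B_i$ containing $x_i$ (this is possible since $x_i$ is an endvertex of $C_i$, hence lies in some bag), and similarly a bag $B_i'$ containing $x_{i+1}$; one can always arrange the decomposition of $C_i$ so that $x_i$ is in its leftmost bag and $x_{i+1}$ in its rightmost bag. Then the decomposition of $P_3$ between $x_i$ and $x_{i+1}$, with $\sg{x_i,x_{i+1}}$ added everywhere (width $\le 1+2=3$), can be glued on its left to the decomposition of $C_{i-1}$ "shifted'' so that its rightmost bag becomes $\sg{x_i}$-plus-something, and on its right to that of $C_i$; the key point is that the separator $\sg{x_i}$ (resp. $\sg{x_i,x_{i+1}}$) is honoured because every edge and vertex of $G$ lies in some $P_j$, and each $P_j$ restricted between $x_i$ and $x_{i+1}$ stays on one side. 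Since $P_1,P_2,P_3$ edge-cover $G$, every edge appears in some bag, and the interval condition is maintained because each vertex lives in exactly one piece $C_i$ or in one subpath of $P_3$ between consecutive anchors (except the anchors themselves, which form a contiguous run of bags by construction). Hence $\pw(G)\le 3$.

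The main obstacle I expect is the bookkeeping of the gluing: making sure that when a vertex of $Z$ is shared between a piece decomposition and a $P_3$-segment decomposition, its bags really do form a contiguous subsequence, and that adding the (at most two) anchor vertices never pushes a bag above size $4$. This is essentially a careful but routine induction on $\ell$, gluing one piece and one $P_3$-segment at a time via the stated gluing operation, each step preserving width $\le 3$; the only genuinely structural input is \Cref{clm: ADN} (to get the common order, hence the skewer structure of $P_1\cup P_2$) and the fact that $Z\subseteq V(P_3)$ (so that the skewer's distinguished vertices are exactly the points where we can safely break $P_3$).
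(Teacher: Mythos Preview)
Your argument has a genuine gap. You assert that ``each vertex lives in exactly one piece $C_i$ or in one subpath of $P_3$ between consecutive anchors (except the anchors themselves)'', and this is precisely what fails. The hypothesis $V(P_1)\cap V(P_2)\subseteq V(P_3)$ does \emph{not} prevent $P_3$ from meeting $P_1$ or $P_2$ at vertices outside $Z$. If $w\in (V(P_3)\cap V(P_1))\setminus V(P_2)$, then $w\notin Z$, yet $w$ lies in some skewer piece $C_i$ \emph{and} in the segment $P_3[x_i,x_{i+1}]$ (indeed, applying \Cref{clm: ADN} to $P_1,P_3$ with the triple $x_i,w,x_{i+1}$ forces $w$ to sit between $x_i$ and $x_{i+1}$ on $P_3$ as well). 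Your concatenated decomposition then places $w$ in two non-adjacent blocks of bags, breaking the interval property. Concretely, take $P_1=x_1,a,b,c,x_2$, $P_2=x_1,d,e,f,x_2$ and $P_3=x_1,a,e,c,x_2$ in the graph they generate: all three are shortest $x_1x_2$-paths of length $4$, the hypothesis holds with $Z=\{x_1,x_2\}$, but the vertices $a,e,c$ each belong both to the $8$-cycle $C_1$ and to the segment $P_3[x_1,x_2]$. Your scheme of concatenating a width-$2$ decomposition of $C_1$ with a (anchor-augmented) decomposition of $P_3[x_1,x_2]$ cannot make the bags containing $a$, $e$, $c$ simultaneously contiguous.

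The paper takes a different route that sidesteps this interaction entirely: it layers by BFS from the endpoint $a_3$ of $P_3$. After replacing each $P_i$ ($i=1,2$) by a suffix $B_i$ starting at its first meeting point with $P_3$, the three paths $P_3,B_1,B_2$ are all vertical with respect to $a_3$ (this is exactly where the inclusion $V(P_1)\cap V(P_2)\subseteq V(P_3)$ is used), so their union is $3$-layered and has pathwidth at most $3$ by \Cref{lem: k-layered}; the two leftover prefixes are pendant paths attached at single vertices and are easy to incorporate. Your idea can be rescued by noting that each block $C_i\cup P_3[x_i,x_{i+1}]$ is a union of three shortest $x_ix_{i+1}$-paths and hence itself admits a $3$-layering from $x_i$ --- but that is the paper's layering argument applied locally, not the separate-then-glue construction you describe.
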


\begin{proof}
 Assume without loss of generality that $V(P_1)\cap V(P_2)\subseteq V(P_3)$, and that $G$ is connected. 
 Note that if one of the two paths $P_1, P_2$ is disjoint from $P_3$, then $G$ is not connected, and we get a contradiction, hence both paths intersect $P_3$. For each $i\in \sg{1,2}$, we let $u_i, v_i$ denote the two vertices from $V(P_i)\cap V(P_3)$ maximizing $d(u_i, v_i)$, and such that $d(a_3,u_i)\leq d(a_3, v_i)$, and we let $B_i:=P_i[u_i, b_i]$ (see \Cref{fig: 3paths_intersection}). Then the graph $H:=P_3\cup B_1\cup B_2$ admits a $3$-layering $(V_0, \ldots, V_r)$, where $r:=\ecc_H(a_3)$, and for each $i\in \sg{0, \ldots, r}$, $V_i:=\sg{u\in V(H): d(a_3, u)=i}$; this follows from the observation that the three paths $P_3, B_1, B_2$ are vertical with respect to $a_3$ (note that the hypothesis that $V(P_3)\subseteq V(P_1)\cap V(P_2)$ is used here). In particular, note that this layering satisfies the property that each path $P_j$ intersects at most once each layer $V_i$. 
 Observe now that $G$ is obtained from $H$ by attaching the two pendant paths $A_1:=P_1[a_1, u_1]$ and $A_2:=P_2[a_2, u_2]$ to the vertices $u_1$ and $u_2$. We claim that it is then not hard to construct a $3$-layering of $G$ obtained from the one of $H$ by adding these two pendant paths to the layering $(V_i)_{0\leq i\leq r}$.
\end{proof}

\begin{figure}[htb]
    \centering  
    \includegraphics[scale=1]{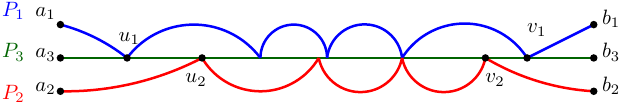}
    \caption{Proof of \Cref{cl: 3-paths intersects}.}
    \label{fig: 3paths_intersection}
\end{figure}

\begin{lemma}\label{cl:3-paths 2 intersections}
    If one of the three paths $P_1,P_2,P_3$ intersects the other two at most twice in total, then $\pw(G) \leq 3$.
\end{lemma}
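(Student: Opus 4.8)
The statement is Lemma~\ref{cl:3-paths 2 intersections}: if one of $P_1,P_2,P_3$ intersects the other two at most twice in total, then $\pw(G)\le 3$. I would first fix notation: say it is $P_1$ that meets $V(P_2)\cup V(P_3)$ in at most two vertices altogether. If $P_1$ meets the union in at most one vertex, then (since $G$ is connected) $G$ is obtained from the graph $G':=P_2\cup P_3$ edge-covered by the two shortest paths $P_2,P_3$ — hence a skewer of pathwidth $\le 2$ by \Cref{prop: skewer} — by attaching the pendant path $P_1$ at a single vertex; attaching a pendant path to one vertex of a graph does not increase the pathwidth above $\max(2,1)$, so $\pw(G)\le 2\le 3$. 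So the real case is when $P_1$ meets $V(P_2)\cup V(P_3)$ in exactly two vertices, call them $p,q$, with $p$ before $q$ on $P_1$.

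**Key steps.** Consider $G':=P_2\cup P_3$: again by \Cref{prop: skewer} each component of $G'$ is a skewer. Since $G$ is connected and the only edges of $G$ not in $G'$ lie on $P_1$, the pieces of $G$ outside $G'$ are exactly the three subpaths $P_1[a_1,p]$, $P_1[p,q]$, $P_1[q,b_1]$ (some possibly trivial), attached to $G'$ at $p$ and $q$. The outer two, $P_1[a_1,p]$ and $P_1[q,b_1]$, are again pendant paths attached at single vertices and contribute nothing once we have a decomposition of the rest. The heart of the matter is the graph $H:=G'\cup P_1[p,q]$: an edge-disjoint union of the skewer(s) forming $G'$ and a single path $P_1[p,q]$ whose two endpoints $p,q$ each lie on $G'$. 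I would now argue that $H$ has a path-decomposition of width $\le 3$. Since $P_1[p,q]$ is a shortest $pq$-path, $p$ and $q$ lie in the same component of $G'$, which is then a skewer with distinguished vertices $x_1,\dots,x_\ell$. Using the "moreover" clause noted right after \Cref{def: skewer} — that for any two vertices $a,b$ lying in opposite rails $Q_i,R_i$ of a single piece $C_i$ there is a width-$2$ path-decomposition of the whole skewer with a bag equal to $\{a,b\}$ — I would locate the pieces containing $p$ and $q$ and build a width-$2$ decomposition of $G'$ whose bag sequence passes, in order, through a bag $\{p,\cdot\}$ and then a bag $\{q,\cdot\}$ (or handle the subcase $p,q$ in the same piece, or one is some $x_i$, separately but analogously). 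Then into the portion of the decomposition between those two bags I splice a path-decomposition of the $pq$-path $P_1[p,q]$, adding $p$ to the leftmost-relevant bags and $q$ to the rightmost: concretely, keep $p$ and $q$ in every bag along the spliced segment, which raises the width on that segment from $2$ to at most $4$... so instead I would be more careful: keep $\{p,q\}$ only as "interface" and note $P_1[p,q]$ is just a path, so its internal bags need size $2$, and overlapping with the at most two vertices of the skewer bag straddled gives width $\le 3$. Finally glue the two trivial pendant paths $P_1[a_1,p]$, $P_1[q,b_1]$ at the ends, which does not raise the width, and re-attach via \Cref{rem: X}-type reasoning or direct gluing (the "glue by a bag" operation defined above) the remaining components of $G'$ if $G'$ is disconnected — but connectivity of $G$ forces everything to hang off $H$ through $p$ or $q$, so ordinary gluing of width-$\le 3$ decompositions along singleton bags $\{p\}$ or $\{q\}$ finishes it.

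**Main obstacle.** The delicate point is the bookkeeping of the width when splicing the decomposition of the internal path $P_1[p,q]$ into a width-$2$ decomposition of the skewer $G'$: naively carrying both $p$ and $q$ through the spliced region costs two extra vertices and gives width $4$, so I need the sharper observation that one only has to carry a single "live" endpoint at a time (walking the path-decomposition of $P_1[p,q]$ from $p$ toward $q$, the vertex $p$ leaves the bags early), together with the freedom granted by the skewer's structure to choose a decomposition of $G'$ that already isolates $p$ and $q$ in small bags in the right order. Making sure the order of $p$ and $q$ along $P_1$ is compatible with their order along a single path-decomposition of the skewer — using \Cref{clm: ADN} to control how $P_1$ can meet the $P_2,P_3$ structure — is where I expect the argument to need the most care; the case analysis (both of $p,q$ internal to rails vs. equal to some $x_i$, same piece vs. different pieces, $G'$ connected vs. not) is routine but must be written out.
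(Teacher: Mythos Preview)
Your plan shares its skeleton with the paper's proof: let the ``special'' path meet the skewer $G'=P_2\cup P_3$ in at most two points, and splice in a width-$1$ decomposition of that path at a small interface. Where the two diverge is precisely at your ``main obstacle'', and there the paper has a key idea you are missing.

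The paper (taking $P_3$ as the special path and $H=P_1\cup P_2$) first uses \Cref{cl: 3-paths intersects} to reduce to the case where the two intersection points are $a\in V(P_1)\setminus V(P_2)$ and $b\in V(P_2)\setminus V(P_1)$, i.e.\ $a$ and $b$ lie on \emph{opposite rails} of the skewer. If $a,b$ happen to sit in the \emph{same} piece $C_i$, the ``moreover'' clause after \Cref{def: skewer} gives a width-$2$ decomposition of $H$ with a bag equal to $\{a,b\}$; since this bag has size exactly $2$, inserting the width-$1$ decomposition of $P_3$ (with $a,b$ added to every bag) right after it is legitimate and yields width $\le 3$. This is essentially your splicing idea, and it works here precisely because the interface bag has only two vertices, so nothing else needs to be carried through.

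The case you get stuck on---$a,b$ in \emph{different} pieces---is handled in the paper not by splicing at all. Instead one performs a \emph{rail swap}: if $a\in R_i$ and $b\in Q_j$ with $i\neq j$, replace $P_1,P_2$ by $P'_1:=R_0\cdots R_{j-1}\cdot Q_j\cdot R_{j+1}\cdots R_\ell$ and $P'_2:=Q_0\cdots Q_{j-1}\cdot R_j\cdot Q_{j+1}\cdots Q_\ell$. These are still shortest paths edge-covering $H$, and now $a,b\in V(P'_1)$, so $V(P_3)\cap V(P'_2)\subseteq V(P'_1)$ and \Cref{cl: 3-paths intersects} finishes. This sidesteps the width-$4$ bookkeeping entirely.

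Your direct-splicing plan for the different-pieces case, as written, has a real gap: ``carry a single live endpoint at a time'' does not by itself bring the width to $3$, because the spliced segment must also carry whatever the surrounding skewer bags share, and generically that is two more vertices. One can rescue your approach by exploiting the size-$1$ joint bags $\{x_\ell\}$ between consecutive pieces of the skewer, but you do not say this, and the paper's rail-swap is cleaner anyway. (Your worry about the order of $p,q$ on $P_1$ matching their order in the skewer is, incidentally, a non-issue: once you have any bag containing both, the whole of $P_1$ can be spliced there regardless of orientation.)
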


\begin{proof}
    Assume without loss of generality that $|V(P_3)\cap (V(P_1)\cup V(P_2))|\leq 2$. 
    Note that if $P_3$ does not intersect $V(P_1)\setminus V(P_2)$ (or symmetrically, $V(P_2)\setminus V(P_1)$), then we have $V(P_3)\cap V(P_1)\subseteq V(P_2)$ (and symmetrically, $V(P_3)\cap V(P_2)\subseteq V(P_1)$), in which case \Cref{cl: 3-paths intersects} implies that $\pw(G)\leq 3$. We may thus assume that $P_3$ intersects $V(P_1)\cup V(P_2)$ in exactly two vertices $a\in V(P_1)\setminus V(P_2)$ and $b\in V(P_2)\setminus V(P_1)$.  
    
    We now set $H:=P_1\cup P_2$. By \Cref{prop: skewer}, $H$ is a skewer, and we let $x_1,\ldots, x_{\ell}$, $R_0, \ldots, R_{\ell}$, $Q_0, \ldots, Q_{\ell}$, denote its corresponding components with the notations of \Cref{def: skewer}. 
    Assume first that there exists some $i\in \sg{0,\ell}$, such that $a\in R_i$ and $b\in Q_i$. Then $H$ has a path decomposition of width $2$ that contains a bag equal to $\sg{a,b}$. In particular, as $\sg{a,b}$ separates $V(P_3)$ from $V(H)$ in $G$, we can insert after the bag $\sg{a,b}$ of this path decomposition some path decomposition of width $1$ of $P_3$ to which we add in each bag the set $\sg{a,b}$. This gives a path decomposition of width at most $3$ of $G$. 
    
    Assume now that we are not in the previous situation. In particular, as we assumed that $a\in V(P_1)\setminus V(P_2)$ and $b\in V(P_2)\setminus V(P_1)$, up to symmetry, there exist $i\neq j$ such that $a\in R_i$ and 
    $b\in Q_j$. We now let $P'_1:=R_0\cdots R_{j-1}\cdot Q_j \cdot R_{j+1}\cdots R_{\ell}$ and $P'_2:=Q_0\cdots Q_{j-1}\cdot R_j \cdot Q_{j+1}\cdots Q_{\ell}$. Observe that $H$ is edge-covered by $P'_1, P'_2$, and that $P'_1, P'_2$ are shortest paths in $G$, such that $a,b\in V(P'_1)$. In particular, we can again conclude as above, using \Cref{cl: 3-paths intersects} with respect to the paths $P'_1,P'_2,P_3$ that edge-cover $G$.
\end{proof}

\begin{lemma}\label{lem:two common points}
    If there are at least two vertices that simultaneously belong to the three paths $P_1, P_2, P_3$, then $\pw(G) \leq 3$
\end{lemma}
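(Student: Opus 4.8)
Suppose $x, y$ are two vertices with $x, y \in V(P_1) \cap V(P_2) \cap V(P_3)$, and assume without loss of generality (possibly after reversing some $P_i$) that $x$ is before $y$ on each $P_i$. The plan is to show that $\{x, y\}$, together with $x$ alone and $y$ alone, act as separators that decompose $G$ into pieces each edge-coverable by at most two shortest paths, to which Proposition~\ref{prop: skewer} applies, and then glue the resulting path-decompositions along bags containing $x$ or $y$.

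\textbf{Key steps.} First I would show that each $x_i := x$ separates the ``left parts'' $P_i[a_i, x]$ from the rest and similarly for $y$: by Lemma~\ref{clm: ADN} applied to each pair among $P_1, P_2, P_3$, the common vertices $x$ and $y$ occur in the same relative order on all three paths, so no $P_i$ can ``go around'' $x$ or $y$. More precisely, every edge of $G$ lies on some $P_i$, hence every path in $G$ between a vertex strictly before $x$ on some $P_i$ and a vertex strictly after $y$ on some $P_j$ must pass through $\{x, y\}$ (and symmetric statements for $x$, $y$ individually). This gives three regions: the ``left region'' $G_L$ induced by $P_1[a_1,x] \cup P_2[a_2,x] \cup P_3[a_3,x]$, the ``middle region'' $G_M$ induced by $P_1[x,y] \cup P_2[x,y] \cup P_3[x,y]$, and the ``right region'' $G_R$ induced by $P_1[y,b_1] \cup P_2[y,b_2] \cup P_3[y,b_3]$, overlapping only in $x$ and $y$. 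Second, I would handle $G_M$ directly: the three subpaths $P_i[x,y]$ are all shortest $xy$-paths of the same length, so $G_M$ has a $3$-layering by distance from $x$ (using that each $P_i[x,y]$ is vertical with respect to $x$), hence $\pw(G_M) \leq 3$ by Lemma~\ref{lem: k-layered}, and moreover one can arrange a path-decomposition of $G_M$ whose extremities are bags of size at most $3$ containing $x$ (left end) and containing $y$ (right end) — this follows from the ``Moreover'' part of Lemma~\ref{lem: k-layered} after attaching $x$ and $y$ appropriately. Third, for $G_L$: it is edge-coverable by the three subpaths $P_i[a_i, x]$, all of which end at the common vertex $x$; contracting everything at $x$, or rather observing that $x$ is a cut vertex (indeed a ``universal end''), $G_L$ decomposes at $x$ into pieces each meeting $x$, and each such piece together with the fact that the three paths share the endpoint $x$ lets us reduce to the two-path case via Lemma~\ref{cl: 3-paths intersects} (two of the three subpaths' intersection is contained in the third because they all contain $x$ at one end — in fact I would more carefully observe that $G_L$ is covered by $P_1[a_1,x] \cup P_2[a_2,x]$ (a skewer by Proposition~\ref{prop: skewer}) plus the pendant-ish path $P_3[a_3,x]$ attached at $x$, so a skewer-type $3$-layering ending at a bag containing $x$ exists, exactly as in the proof of Lemma~\ref{cl: 3-paths intersects}). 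Symmetrically $G_R$ has a path-decomposition of width at most $3$ starting at a bag containing $y$. Finally, glue: take the decomposition of $G_L$ ending at a bag containing $x$ (pad it to contain $x$), glue by a bag containing $x$ to the decomposition of $G_M$, then glue by a bag containing $y$ to the decomposition of $G_R$; since $\{x\}$ separates $G_L$ from $G_M \cup G_R$ and $\{y\}$ separates $G_R$ from $G_L \cup G_M$, this produces a valid path-decomposition of $G$ of width at most $3$.

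\textbf{Main obstacle.} The delicate point is the bookkeeping on the boundary bags: I need path-decompositions of $G_L$ and $G_R$ that have an \emph{extremity} bag containing the single vertex $x$ (resp. $y$), and a decomposition of $G_M$ with one extremity containing $x$ and the other containing $y$, so that the glue is legal and the width stays $\leq 3$ rather than jumping to $4$. For $G_M$ this is the ``Moreover'' clause of Lemma~\ref{lem: k-layered} combined with the fact that $x$ lies alone in layer $V_0$ and $y$ alone in layer $V_r$ (both layers thus having size $1$, leaving room), so the endpoint bags can be taken to be just $\{x\}$-side and $\{y\}$-side of size at most $3$. For $G_L$ and $G_R$ I would mirror the argument in Lemma~\ref{cl: 3-paths intersects}: build the $3$-layering of the ``core'' skewer-plus-extra-path rooted so that $x$ (resp. $y$) sits at the distinguished end, making sure the pendant paths $A_i$ are appended without disturbing the endpoint. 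A secondary subtlety is the degenerate cases (some $P_i[a_i,x]$ trivial, or $x=a_i$ for some $i$, or $G_L$ empty), which should all only make the bound easier and can be dispatched quickly, possibly by invoking Lemma~\ref{cl:3-paths 2 intersections} or Lemma~\ref{cl: 3-paths intersects} outright when one region degenerates.
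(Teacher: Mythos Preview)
Your overall plan is sound and the core idea---layering by distance from a common vertex---is exactly what the paper uses. However, the paper's proof is far more direct: it picks \emph{one} of the two common vertices, say $u$, splits each $P_i$ at $u$ into $A_i := P_i[a_i,u]$ and $B_i := P_i[u,b_i]$, observes that $\{u\}$ separates $\bigcup_i A_i$ from $\bigcup_i B_i$ (this is where the second common vertex $v$ is used, via the distance argument you sketched with Lemma~\ref{clm: ADN}), and then writes down a \emph{single} global $3$-layering $(U_\ell,\dots,U_1,V_0,\dots,V_r)$ by distance from $u$. No gluing, no three regions, no appeal to earlier lemmas---just one BFS and Lemma~\ref{lem: k-layered}. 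Your three-region decomposition $G_L, G_M, G_R$ is doing the same work three times over and then stitching.

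There is also a small genuine gap in your treatment of $G_L$ (and symmetrically $G_R$). You assert that $P_3[a_3,x]$ is a ``pendant-ish path attached at $x$'' to the skewer $P_1[a_1,x]\cup P_2[a_2,x]$, and that Lemma~\ref{cl: 3-paths intersects} applies because ``two of the three subpaths' intersection is contained in the third.'' Neither claim holds in general: $P_3[a_3,x]$ may meet $P_1[a_1,x]$ or $P_2[a_2,x]$ at many vertices other than $x$, and there is no reason any pairwise intersection sits inside the third path. The clean fix---which is really just the paper's argument restricted to $G_L$---is to note that all three subpaths $P_i[a_i,x]$ are shortest paths ending at $x$, hence vertical with respect to $x$, so the BFS layers from $x$ give a $3$-layering of $G_L$ with rightmost layer $\{x\}$. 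Once you see this, you may as well do it once from $u$ for all of $G$, which is the paper's proof.
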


\begin{proof}
    \label{proof:two-common-points}
    Let $u$ and $v$ be two distinct vertices of $V(P_1) \cap V(P_2) \cap V(P_3)$.
    Without loss of generality, we assume that $a_i, u, v, b_i$ appear in this order on $P_i$ for $i=1,2,3$. 
    For $i=1,2,3$, we denote $A_i$ the $a_iu$-subpath of $P_i$, and $B_i$ the $ub_i$-subpath of $P_i$.
    Then, the graph $G$ admits a $3$-layering $(U_\ell, \dots, U_1, V_0, \dots,V_r)$ such that $U_i := \{x \in V(A_1) \cup V(A_2) \cup V(A_3) : \dist(u, x) = i\}$ and $V_i := \{x \in V(B_1) \cup V(B_2) \cup V(B_3): \dist_H(u, x) = i\}$; this follows from the fact that $\sg{u}$ separates the vertices on the $A_i$'s from the vertices on the $B_i$'s in $G$, and that the $A_i$'s and $B_i$'s are all vertical w.r.t. $u$. 
\end{proof}

We say that a path $P$ \emph{bounces} between two paths $Q_1, Q_2$ if there exists $i\in \sg{1,2}$ such that $P$ has a subpath $P'$ with both endvertices on $Q_i$, and such that one of the internal vertices of $P'$ intersects $Q_{3-i}$.

By \Cref{cl:3-paths 2 intersections}, we can assume from now on that $P_1$ intersects the other paths at least twice, and by \Cref{lem:two common points}, that at most one of the vertices in these intersections lies simultaneously on the three paths $P_1, P_2, P_3$. 
In particular, up to exchanging $a_1$ and $b_1$, and $P_2$ and $P_3$, we may assume that if $u_0$ denotes the vertex on $P_1$ which is the closest to $a_1$, and belongs to one of the two paths $P_2, P_3$, then $u_0\in V(P_1) \cap V(P_2) \setminus V(P_3)$. 
We set $A_i := P_i[a_i,u_0]$ and $B_i:=P_i[u_0,b_i]$ for $i= 1,2$.

\begin{remark}
 \label{rem: A1}
 Observe that our choice of $u_0$ implies that $V(A_1)\cap (V(P_2)\cup V(P_3))=\sg{u_0}$.
\end{remark}
By \Cref{cl: 3-paths intersects} and \Cref{cl:3-paths 2 intersections}, we can assume that $P_3$ intersects $B_1-u_0$, and at least one of the paths between $A_2-u_0$ and $B_2-u_0$, say $B_2-u_0$ by symmetry. 
In particular, at least one of the following  situations occurs (up to symmetrically exchange the roles of $A_2$ and $B_2$)
\begin{enumerate}[label=(\arabic*)]
    \item\label{it: 1} $P_3$ bounces between two of the paths $A_2, B_2$ and $B_1$,
    \item\label{it: 2} $P_3$ does not intersect $A_2$,
    \item\label{it: 3} $P_3$ intersects the three paths $A_2, B_2, B_1$ in this order,
    \item\label{it: 4} $P_3$ intersects the three paths $A_2, B_1, B_2$ in this order.
\end{enumerate}
To conclude the proof of \Cref{thm: 3-paths}, we will prove that $\pw(G) \leq 3$ in each of these cases in the following separate lemmas. This is the most technical part of the proof, especially for cases \ref{it: 1} and \ref{it: 2}.
The next four lemmas respectively deal with each of the four cases. We start with \ref{it: 1}.

\begin{lemma}\label{lem:alternatively intersect}
    If $P_3$ bounces between two of the paths in $\sg{A_2, B_1, B_2}$, then $\pw(G) \leq 3$.
\end{lemma}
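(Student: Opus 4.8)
The plan is to exploit the structure already extracted: $H := P_1 \cup P_2$ is a skewer (\Cref{prop: skewer}), $u_0$ is the first vertex of $P_1$ (starting from $a_1$) meeting $P_2 \cup P_3$, and by \Cref{rem: A1} the subpath $A_1$ is a pendant path attached to the rest of the graph at $u_0$ only. Since $A_1$ contributes nothing to the interaction between $P_3$ and the skewer, I would first set it aside: it suffices to build a width-$3$ path-decomposition of $G - (V(A_1) \setminus \sg{u_0}) = B_1 \cup P_2 \cup P_3$, since one can then prepend a width-$1$ decomposition of $A_1$ padded with $\sg{u_0}$ (exactly the gluing trick used in \Cref{cl:3-paths 2 intersections}). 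So from now on assume $A_1$ is trivial, i.e. $a_1 = u_0 \in V(P_1)\cap V(P_2)\setminus V(P_3)$.

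Next I would pin down what ``$P_3$ bounces'' gives us. By definition there is a subpath $P_3'$ of $P_3$ with both endvertices $a,b$ on one of $A_2, B_1, B_2$ — call that path $R$ — and an internal vertex of $P_3'$ lying on another of the three paths. The key observation is that $R$ is a shortest path, $P_3'$ is a shortest path with the same endpoints, and (via \Cref{clm: ADN} applied to $R$ and $P_3$) the vertices of $V(P_3)\cap V(R)$ appear in the same order along both; combined with the skewer structure of $H$, each vertex $x_i$ of the skewer separates ``before'' from ``after'' in $H$, and since $P_3$ only meets $H$ on $B_1 \cup B_2$ (recall $V(A_1)\cap(V(P_2)\cup V(P_3)) = \sg{u_0}$ and we reduced $A_1$ to a point), $P_3$ is confined to a bounded ``span'' of skewer pieces. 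Concretely, I expect to show that all of $V(P_3) \cap V(H)$ lies between two consecutive-ish skewer vertices $x_p, x_q$, so that $\sg{x_p, x_q}$ — or rather a constant-size separator around the relevant pieces — separates $P_3$ from the rest of $H$. Then: take the width-$2$ path-decomposition of the skewer $H$, locate the portion corresponding to the pieces spanned by $P_3$, and into that portion splice a decomposition that also accounts for $P_3$. Because $P_3$ together with the two boundary pieces of the skewer it touches is itself edge-covered by at most three shortest paths in a way that is vertical with respect to a suitable basepoint (or is again a small skewer), one gets a width-$3$ layering of that local chunk whose two extreme bags are the two single-vertex (or two-vertex) separators; gluing this into the width-$2$ decomposition of $H$ along those bags, padding the rest of $H$'s bags by the $O(1)$ separator vertices only where needed, yields width $3$ overall.

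The main obstacle I anticipate is the bookkeeping in the ``bounce'' step: showing that the bouncing configuration really does force $P_3 \cap H$ to be localized to a constant number of skewer pieces, rather than threading through many of them. This needs a careful application of \Cref{clm: ADN} to the pairs $(P_3, P_1)$, $(P_3, P_2)$ and to the two skewer-halves $R_i, Q_i$, together with the fact that $P_3$ is a shortest path and hence cannot re-enter a piece it has left on the ``same side''. Once localization is established, the decomposition-gluing is routine (it is the same mechanism as in \Cref{cl: 3-paths intersects,cl:3-paths 2 intersections,lem:two common points}), so I would state the localization as an internal claim with its own short proof and then assemble the path-decomposition piece by piece, possibly further splitting into the sub-case where the bounce is within $B_1, B_2$ (both halves of a cycle-piece) versus across a skewer-vertex. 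I would keep the explicit width bound slack (a fixed skewer piece plus $P_3$ admits a $3$-layering because at most three shortest paths, each meeting each BFS layer once, pass through it) rather than optimizing constants.
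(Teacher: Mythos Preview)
There is a genuine gap. Your plan rests on two claims that do not hold as stated.

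First, you assert that ``$P_3$ only meets $H$ on $B_1\cup B_2$'' by appealing to \Cref{rem: A1}. But \Cref{rem: A1} only tells you $P_3$ avoids $A_1-u_0$; it says nothing about $A_2$. The bouncing hypothesis even includes the sub-case where $P_3$ bounces between $A_2$ and $B_1$, and in the $B_1$--$B_2$ sub-case the tails of $P_3$ outside its first and last contact with $B_1\cup B_2$ may a priori hit $A_2$. The paper spends most of its proof on precisely this: writing $P_3=A_3\cdot B_3\cdot C_3$ with $B_3$ the maximal subpath having both endpoints $u_1,u_3$ in $V(B_1)\cup V(B_2)$, it proves via explicit distance inequalities (which genuinely use the bounce) that $B_3$ avoids $A_2$, that $C_3$ avoids $A_2$, and that $V(B_1)\cap V(A_2)=\sg{u_0}$. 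Only after these three claims does $\sg{u_0,u_1}$ become a separator of $G$ into $G_1=A_1\cup A_2\cup A_3$ and $G_2=B_1\cup B_2\cup B_3\cup C_3$.

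Second, your ``localization to a constant number of skewer pieces'' is not the right invariant, and I do not see a proof: $P_3$ can intersect $B_1\cup B_2$ over an arbitrarily long stretch of the skewer, so no pair $\sg{x_p,x_q}$ of skewer vertices need confine it. What makes $G_2$ tractable is not finiteness of pieces but \emph{verticality with respect to $u_0$}. The paper shows $B_3$ is vertical w.r.t.\ $u_0$ (using the bounce to cover $B_3$ by two overlapping subpaths each with both endpoints on the same $B_i$); then $B_1$, $B_2$, and $B_3\cdot C_3$ are three paths all vertical from $u_0$, so the BFS layers from $u_0$ give a $3$-layering of $G_2$ regardless of how many skewer pieces it spans, and \Cref{lem: k-layered} yields a width-$3$ decomposition with leftmost bag $\sg{u_0}$ (then refined so that $\sg{u_0,u_1}$ appears). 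On the other side, $A_2\cup A_3$ is a skewer of width $\le 2$ with rightmost bag $\sg{u_1}$, and $A_1$ is pendant at $u_0$; adding $u_0$ to all bags and gluing gives width $3$. Your high-level shape (strip off $A_1$; split; glue) is correct, but the engine is verticality of $B_3$ from $u_0$ together with the $A_2$-avoidance distance arguments, not a localization claim on the skewer.
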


\begin{proof}
     We let $u_0\in V(P_1)\cap V(P_2) \setminus V(P_3)$ be such that $P_3$ bounces between two paths in $\sg{A_2, B_1, B_2}$. 
    Note that by Claim \ref{clm: ADN}, $P_3$  cannot bounce between $A_2$ and $B_2$. 
    We assume that $P_3$ bounces between $B_1$ and $B_2$, and claim that the case where $P_3$ bounces between $B_1$ and $A_2$ is symmetric.
    Let $B_3$ be the longest subpath of $P_3$ with both extremities $u_1$ and $u_3$ in $V(B_1) \cup V(B_2)$. 
    Since $P_3$ bounces between $B_1$ and $B_2$, $B_3$ has length at least $2$.
    We moreover assume without loss of generality that $0 < \dist(u_0, u_1) \leq \dist(u_0, u_3)$ (recall that $u_0\notin V(P_3)$). 

    We claim that $B_3$ is vertical with respect to $u_0$. This is immediate if $B_3$ has both endvertices on $B_1$, or symmetrically on $B_2$, as both $B_1, B_2$ are shortest paths in $G$ starting from $u_0$. Assume now without loss of generality that $u_1\in V(B_1)$, while $u_3\in V(B_2)$. Then one can easily see that, as $B_3$ bounces between $B_1$ and $B_2$, there exist two distinct vertices $u_2\in V(B_2), u'_2\in V(B_1)$ such that $u_1, u_2, u'_2, u_3$ are pairwise distinct and appear in this order on $B_3$. In particular, in the latter case, $B_3[u_1, u'_2]$ and $B_3[u_2, u_3]$ both bounce between $B_1$ and $B_2$, and have both endvertices respectively on $B_1$, and on $B_2$, so they are both vertical with respect to $u_0$. As they edge-cover $B_3$, and share the non-trivial proper subpath $B_3[u_2,u'_2]$, it follows that $B_3$ is also vertical with respect to $u_0$.

    We now prove that $B_3$ does not intersect $A_2$. 
    Assume for sake of contradiction that there exists a vertex $w$ in $V(B_3)\cap V(A_2)$.
    As $u_0\notin V(P_3)$, $w \neq u_0$.
    As $B_3$ bounces between $B_1$ and $B_2$, note that $V(B_3) \cap V(B_2)$ contains a vertex $v$ distinct from $u_0$. 
    By the previous paragraph, observe that the path $Q:=B_3[v,w]$ is vertical with respect to $u_0$, hence $d(v,w)<\max(d(u_0,v), d(u_0,w))$.
    In particular, we then obtain a contradiction as $P_2[v,w]$ has length $d(u_0,v)+d(u_0,w)$, and should be a shortest $vw$-path in $G$.

    We now consider the subpaths $A_3,C_3$ of $P_3$ such that $P_3 := A_3 \cdot B_3 \cdot C_3$. 
    By construction of $B_3$, both $A_3-u_1$ and $C_3-u_3$ are disjoint from $V(B_1)\cup V(B_2)$.

    We prove that $C_3$ does not intersect $A_2$.
    Assume for sake of contradiction that $C_3$ intersects $A_2$, on a vertex $w$ (see \Cref{fig: C2}).
    First, note that $B_3$ should then intersect $B_2$ in at most one vertex, as otherwise, the paths $P_2,P_3$ would contradict \Cref{clm: ADN}. 
    We then assume that $B_3$ intersects $B_2$ in exactly one vertex $u_2$. Since $B_3$ bounces between $B_1$ and $B_2$, in particular $u_2$ is an internal vertex of $B_3$, and we must have $u_1, u_3\in V(B_1)\setminus V(B_2)$.
    We now set $a:=d(u_1,u_2)$, $b:=d(u_2,u_3), c:=d(u_3,w), d:=d(u_0,w), e:=d(u_0,u_2)$ and $f:=d(u_0,u_1)$, so that the situation is as depicted in \Cref{fig: C2}. 
    Since $B_3$ is vertical with respect to $u_0$, we have $e = a + f$. 
    Moreover, since both paths $P_2[w, u_2]$ and $P_3[u_2,w]$ are shortest paths, we have $\dist(u_2, w) = b+c = e+d$.
    Moreover, as $P_3[u_1,w]$ is a shortest path, its length is shorter than the one of $P_2[u_0,w]\cdot P_1[u_0,u_1]$, giving $a+b+c\leq f+d$. Combining all previous inequalities then gives $a+e\leq f$, and thus $a=0$, contradicting that $u_2$ is an internal vertex of $B_3$.
    We then obtain, as desired, that $C_3$ does not intersect $A_2$, and thus that $C_3$ only intersects $P_1,P_2$ in $u_3$.

    \begin{figure}
    \centering  
    \includegraphics[scale=1, page=2]{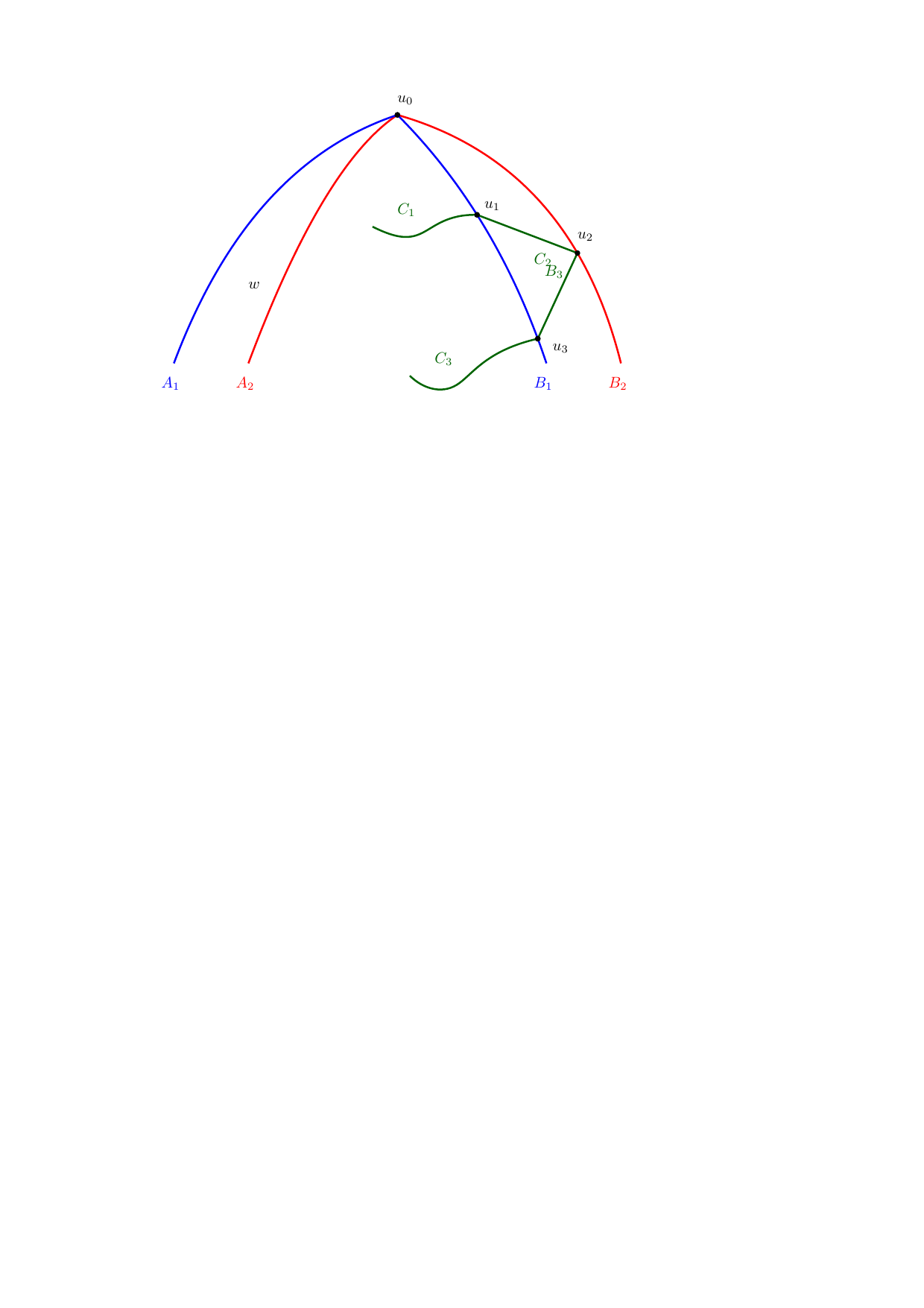}
    \caption{Configuration from the proof of \Cref{lem:alternatively intersect}. The dotted lines represent the layers in the BFS from $u_0$.}
    \label{fig: C2}
    \end{figure}

    We now prove that $V(B_1) \cap V(A_2) = \{ u_0 \}$. 
    Assume for sake of contradiction that there exists a vertex $v\in V(B_1)\cap V(A_2)$ distinct from $u_0$. As $B_3$ bounces between $B_1$ and $B_2$ and as $u_0\notin V(P_3)$, there exist two distinct vertices $x,y$ of $B_3$ such that $0< d(x, u_0)<d(y,u_0)$, and such that $x\in V(B_1)$ and $y\in V(B_2)$. We distinguish two cases (depicted in \Cref{fig: bounce_special}), according whether $d(u_0, x)\leq d(u_0, v)$ or not. Note that in both cases, as $A_2, B_1, B_2$ and $B_3$ are vertical with respect to $u_0$, and as $x, v\neq u_0$, we obtain a contradiction, as the path $B_1[v,x]\cdot B_3[x,y]$ is always strictly shorter than the path $P_2[v, y]$, which is supposed to be a shortest path in $G$.
    
    \begin{figure}[htb]
    \centering  
    \includegraphics[scale=0.65]{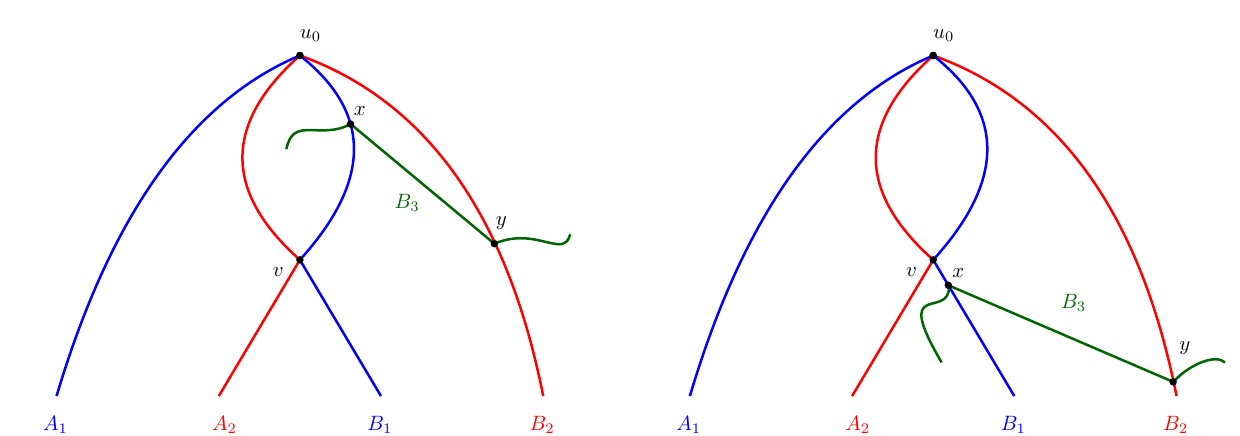}
    \caption{The two different cases considered in the proof of \Cref{lem:alternatively intersect}, when we assume that $A_2$ and $B_2$ intersect each other. On the left, the situation where $d(u_0, x)\leq d(u_0, v)$, and on the right, the situation where $d(u_0, x)\geq d(u_0, v)$.}
    \label{fig: bounce_special}
    \end{figure}
    
    All previous paragraphs together with \Cref{rem: A1} thus imply that we can write $G=G_1\cup G_2$, where $G_1:=A_1\cup A_2\cup A_3$ and $G_2:=B_1\cup B_2\cup B_3\cup C_3$, and $\sg{u_0, u_1}$ separates $G_1$ from $G_2$.
    We now show that $G_1$ (resp. $G_2$) admits a path decomposition of width at most $3$ whose rightmost (resp. leftmost) bag contains $\sg{u_0, u_1}$. This will immediately imply that $\pw(G)\leq 3$.
    
    We first construct such a path decomposition $\cP_2$ for $G_2$. Observe that $B_1, B_2$ are both vertical with respect to $u_0$. Moreover, recall that $C_3-u_3$ is disjoint from $V(B_1) \cup V(B_2)$, hence,
    as $B_3$ is vertical with respect to $u_0$, it implies that $B_3\cdot C_3$ is also vertical with respect to $u_0$. It thus implies that the partition $(V_i)_{i\geq 0}$ of $V(G_2)$ defined by setting for each $i\in \sg{0,\ldots, r}$ (where $r:=\ecc_{G_2}(u_0)$), $V_i:=\sg{x\in V(G_2): d(u_0, x)=i}$ is a $3$-layering of $G_2$. Moreover, note that if we set $s:=d(u_0,u_1)$, then the two subsequences $\mathcal S_1:=(V_0, \ldots, V_{s})$ and $\mathcal S_2:=(V_s, \ldots, V_{r})$ of $(V_i)_{0\leq i\leq r}$ are respectively $2$- and $3$-layerings of the subgraphs $G'_2, G''_2$ of $G_2$ induced respectively by $V_0\cup \cdots\cup V_s$ and by $V_s\cup \cdots \cup V_r$ (see \Cref{fig: bounce-layer}). In particular, two applications of \Cref{lem: k-layered} give path decompositions $\cP'_2, \cP''_2$ respectively of $G'_2$ and $G''_2$ such that: 
    \begin{itemize}
     \item $\cP'_2$ has width at most $2$, and its extremities are $V_0=\sg{u_0}$ and $V_s$,
     \item $\cP''_2$ has width at most $3$, and its leftmost bag is $V_s$ (and thus contains $u_1$).
    \end{itemize}
    To obtain a path decomposition $\cP_2$ of $G_2$ with the desired properties, we can now simply add the vertex $u_1$ in every bag of $\cP'_2$, and glue the obtained path decomposition with $\cP''_2$.  
    
   \begin{figure}[htb]
    \centering  
    \includegraphics[scale=0.8]{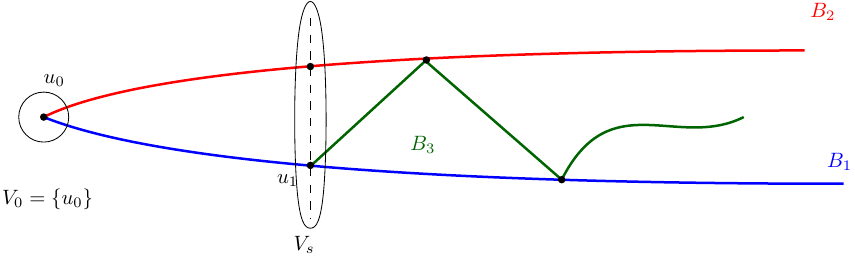}
    \caption{The $3$-layering of $G_2$ described at the end of the proof of \Cref{lem:alternatively intersect}.}
    \label{fig: bounce-layer}
    \end{figure}    

    It remains to construct a path decomposition $\cP_1$ of width $3$ of $G_1$ whose rightmost bag contains $\sg{u_0, u_1}$. 
    By \Cref{rem: A1}, $\sg{u_0}$ separates the vertices of $A_1$ from the other vertices of $G_1$.
    Moreover, by \Cref{prop: skewer}, $A_2 \cup A_3$ is a skewer. In particular, it admits a path decomposition $\cP'_1$ of width at most $2$ whose rightmost bag is $\sg{u_1}$. As $G_1$ is obtained from $A_2\cup A_3$ after attaching some pendant path $A_1$ on $u_0$, the existence of $\cP_1$ easily follows.
\end{proof}

The next lemma deals with \ref{it: 2}.

\begin{lemma}\label{lem:not intersect both}
b    If $P_3$ and $A_2$ are disjoint, then $\pw(G) \le 3$. 
\end{lemma}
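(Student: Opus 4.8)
The plan is to adapt the strategy of \Cref{lem:alternatively intersect}: peel off the part of $G$ that hangs as a pendant path from $u_0$, reduce to a skewer-like core together with $P_3$, and control how $P_3$ sits on that core.

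\emph{Reductions.} By \Cref{rem: A1}, $V(A_1)\cap(V(P_2)\cup V(P_3))=\sg{u_0}$; since $P_1,P_2,P_3$ edge-cover $G$, every edge of $G$ incident to a vertex of $V(A_1)\setminus\sg{u_0}$ is an edge of $P_1$, hence of $A_1$. Thus $A_1$ is a pendant path of $G$ attached at $u_0$, and since pathwidth is unchanged by attaching pendant paths once it is at least $1$, it suffices to prove $\pw(G')\le 3$ for $G':=B_1\cup B_2\cup A_2\cup P_3 = G-(V(A_1)\setminus\sg{u_0})$. Next, if $P_3$ bounces between $B_1$ and $B_2$, then \Cref{lem:alternatively intersect} already gives $\pw(G)\le 3$; so we may assume $P_3$ does not bounce between $B_1$ and $B_2$, and together with $V(A_2)\cap V(P_3)=\emptyset$ this means $P_3$ does not bounce between any two paths of $\sg{A_2,B_1,B_2}$.

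\emph{Structure of $P_3$.} The paths $A_2,B_1,B_2$ are isometric paths issued from $u_0$, hence all vertical with respect to $u_0$; and by \Cref{rem: A1} together with $V(A_2)\cap V(P_3)=\emptyset$, the path $P_3$ meets $V(P_1)\cup V(P_2)$ only inside $V(B_1)\cup V(B_2)$. I would apply \Cref{clm: ADN} to the pairs $(P_3,B_1)$ and $(P_3,B_2)$: the vertices of $V(P_3)\cap V(B_1)$ appear along $P_3$ in the order they appear along $B_1$ (or the reverse), and likewise for $B_2$; moreover every subpath of $P_3$ joining two consecutive vertices of $V(P_3)\cap(V(B_1)\cup V(B_2))$ lying on the same $B_j$ is, together with the corresponding subpath of $B_j$, vertical with respect to $u_0$. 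Combining this with the no-bounce property should give that, up to exchanging $a_3$ and $b_3$, one can write $P_3 = A_3\cdot B_3\cdot C_3$ with $A_3, C_3$ internally disjoint from $B_1\cup B_2$, and $B_3$ meeting $B_1\cup B_2$ in an ``interval-like'' way: it first runs monotonically (in the distance from $u_0$) along $B_1$, then through a single private transition segment runs monotonically along $B_2$ (one run possibly empty). I would also record whether $V(A_2)\cap V(B_1)=\sg{u_0}$ — in which case $A_2$ is itself a pendant path of $G'$ attached at $u_0$ — or whether $A_2$ shares an initial sub-skewer with $B_1$; this dichotomy follows from the skewer structure of $P_1\cup P_2$ and from $u_0$ being the first common vertex of $P_1$ and $P_2$ (again \Cref{rem: A1}).

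\emph{Assembling the decomposition.} The core $B_1\cup B_2\cup A_2$ is a subgraph of $P_1\cup P_2$, which is a skewer by \Cref{prop: skewer}; hence $\pw(B_1\cup B_2\cup A_2)\le 2$, and one may take a width-$2$ path-decomposition $\mathcal Q$ of it whose bags sweep the skewer backbone by nondecreasing distance from $u_0$, with $u_0$ in an extremity bag. By the previous paragraph the monotone $B_1$- and $B_2$-runs of $B_3$ follow the sweep order of $\mathcal Q$, so they can be threaded into $\mathcal Q$ at the cost of one extra vertex per bag; the pendant arcs $A_3$, $C_3$ and the single transition segment of $B_3$ are then inserted as pendant path-decompositions, splitting $G'$ along at most two vertices if necessary, exactly as $G$ was split along $\sg{u_0,u_1}$ in \Cref{lem:alternatively intersect}. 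This yields a width-$\le 3$ path-decomposition of $G'$; re-attaching the pendant path $A_1$ at $u_0$ gives $\pw(G)\le 3$. The main obstacle is the structural claim of the second paragraph — extracting from \Cref{clm: ADN} and the no-bounce hypothesis that the trace of $P_3$ on $B_1\cup B_2$ is monotone in the distance from $u_0$ up to a single transition segment, and combining it correctly with the two possible shapes of $A_2$ relative to $B_1$ — which is exactly the part the text flags as ``the most technical part of the proof''.
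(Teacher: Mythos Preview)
Your overall plan — strip the pendant $A_1$, rule out bouncing via \Cref{lem:alternatively intersect}, write $P_3=A_3\cdot B_3\cdot C_3$ with $B_3$ consisting of a monotone $B_1$-run, a single transition, and a monotone $B_2$-run, then overlay this onto a width-$2$ skewer decomposition of $B_1\cup B_2\cup A_2$ — is close in spirit to the paper's argument, but the ``threading'' step hides a genuine gap.

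The no-bounce assumption does force $P_3$ to meet first $B_1$ and then $B_2$ (up to symmetry), and \Cref{clm: ADN} does force each run to be monotone along its $B_j$. What it does \emph{not} force is that both runs are monotone \emph{in the same direction of distance from $u_0$}. If $v_1,w_1$ (resp.\ $v_2,w_2$) denote the closest and farthest intersections of $P_3$ with $B_1$ (resp.\ $B_2$), then $P_3$ may traverse them in the order $v_1,w_1,w_2,v_2$ (the $B_2$-run is \emph{decreasing} in distance) or $w_1,v_1,v_2,w_2$ (the $B_1$-run is decreasing). In those orderings your sweep $\mathcal Q$ by distance from $u_0$ and the linear order along $P_3$ are incompatible: the $B_1$-segment of $P_3$ wants to be threaded left-to-right in $\mathcal Q$ while the $B_2$-segment wants to be threaded right-to-left, and adding ``one extra vertex per bag'' cannot accommodate both. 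The paper handles precisely this by a separate case analysis on the four possible orderings of $v_1,w_1,v_2,w_2$ along $P_3$.

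There is a second, prior gap. Before any threading can work, you need to know that the skewer piece and the piece carrying $P_3$ really do split along a small separator. The paper proves two structural claims for this: if $D_1=P_1[v_1,w_1]$ meets $B_2$ outside $\sg{v_1,w_1}$ (or symmetrically), then with a \emph{different} basepoint $c_2\in V(P_2)\cap V(P_3)\setminus V(P_1)$ one is back in the bouncing case and \Cref{lem:alternatively intersect} applies; and if $E_1=P_1[w_1,b_1]$ meets $C_2=P_2[a_2,v_2]$, one can reroute $P_1,P_2$ so that \Cref{cl: 3-paths intersects} applies. Only after these reductions does $\sg{v_1,v_2}$ separate $A_1\cdot C_1\cup A_2\cdot C_2$ (a genuine skewer) from $D_1\cup E_1\cup D_2\cup E_2\cup P_3$. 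Your sketch records the dichotomy ``$V(A_2)\cap V(B_1)=\sg{u_0}$ or not'' but never controls the interaction of $B_1$ with $B_2$ between the $P_3$-hits; without that, the skewer decomposition $\mathcal Q$ and the location of $P_3$'s hits on it need not line up at all.
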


\begin{proof}
    \label{proof: not intersect both}
    Suppose that $P_3$ and $A_2$ are disjoint.
    Since by \Cref{rem: A1}, $P_3$ does not intersect $A_1$, by \Cref{cl: 3-paths intersects} we may assume that $P_3$ intersects both $B_1$ and $B_2$.
    We let $v_1$ and $w_1$ (resp. $v_2$ and $w_2$) be the vertices of $V(B_1)\cap V(P_3)$ (resp. $V(B_2) \cap V(P_3)$) such that $v_1$ (resp. $v_2$) is the closest to $u_0$ and $w_1$ (resp. $w_2$) the farthest.
    Note that it is possible that $v_1 =w_1$, or $v_2=w_2$.
    By \Cref{lem:alternatively intersect}, we may assume that $P_3$ does not bounce between $B_1$ and $B_2$, hence we may assume (up to symmetry) that $P_3$ intersects $B_1$, and then $B_2$.
    By \Cref{lem:two common points}, we may moreover assume that there is at most one vertex in $V(P_1) \cap V(P_2) \cap V(P_3)$.
    We write $P_3 = A_3 \cdot B_3 \cdot C_3$, such that $B_3$ is the subpath of $P_3$ from the last vertex of $V(P_1)\cap V(P_3)$ to the first vertex of $V(P_2)\cap V(P_3)$ (note that $B_3$ might contain only one vertex if $V(P_1) \cap V(P_2) \cap V(P_3)$ is not empty). 
    By \Cref{clm: ADN}, $B_3$ has one endpoint in $\sg{v_1, w_1}$, and the other in $\sg{v_2, w_2}$.
    Then, let $C_1 := P_1[a_1, v_1], D_1 := P_1[v_1,w_1], E_1 := P_1[w_1,b_1]$ and let $C_2 := P_2[a_2,v_2], D_2 := P_2[v_2,w_2], E_2 := P_2[w_2,b_2]$ (see \Cref{fig: cas2}). 
    
    \begin{figure}[htb]
    \centering  
    \includegraphics[scale=0.75]{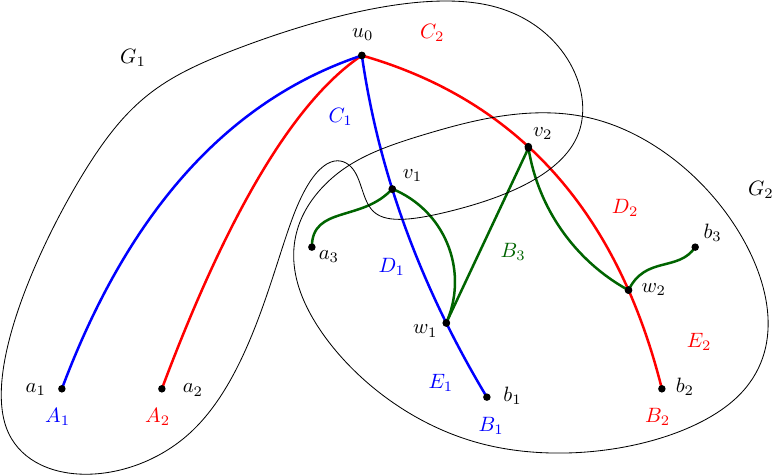}
    \caption{Configuration in the proof of \Cref{lem:not intersect both}. In this special case, the endvertices of $B_3$ are $v_2, w_1$.} 
    \label{fig: cas2}
    \end{figure}

    In what follows, we let $c_1$ be the first vertex of $P_3$ that belongs to $V(P_1)\cup V(P_2)$, and $c_2$ be the last vertex of $P_3$ that belongs to $V(P_1)\cup V(P_2)$. As we assumed that $P_3$ intersects $B_1$ and then $B_2$, we have $c_1\in V(P_1)$ and $c_2\in V(P_2)$. Moreover, note that if  
    $c_1$ is in $V(P_2)$, then we have $V(P_3)\cap V(P_1)\subseteq V(P_2)$, and we conclude that $\pw(G)\leq 3$ by \Cref{cl: 3-paths intersects}. We thus may assume that $c_1\in V(P_1)\cup V(P_3)\setminus V(P_2)$, and by symmetry that $c_2\in V(P_2)\cup V(P_3)\setminus V(P_1)$.

   \begin{claim}
    \label{clm: cas2_D}
    If one of the two inclusions $V(D_1)\cap V(B_2)\subseteq \sg{v_1, w_1}$ or $V(D_2)\cap V(B_1)\subseteq \sg{v_2, w_2}$ does not hold, then $\pw(G)\leq 3$.
   \end{claim}
   \begin{proof}[Proof of the claim]
    We assume that the first inclusion $V(D_1)\cap V(B_2)\subseteq \sg{v_1, w_1}$ does not hold, and claim that the proof if the second inclusion does not hold is symmetric. Then there exists $v\in V(D_1)\cap V(B_2)$ such that 
    $\dist(u_0,v_1) < \dist(u_0, v) < \dist(u_0,w_1)$.
    
    We now claim that we are in the same configuration as in Case \ref{it: 1}, with $c_2$ playing the role of $u_0$, where $P_1$ bounces between the paths $P_2[c_2, a_2]$ and $P_3[c_2, a_3]$, as it either intersects $v_1, v, w_1$ in this order, or $w_1, v, v_1$ in this order (see \Cref{fig: cas2_Claims}, left). We thus conclude that $\pw(G)\leq 3$ thanks to \Cref{lem:alternatively intersect}.
   \end{proof}

   \begin{claim}
    \label{clm: cas2_CE}
    If $V(E_1)\cap V(C_2)\neq \emptyset$, or $V(E_2)\cap V(C_1)\neq \emptyset$, then $\pw(G)\leq 3$.
   \end{claim}
   \begin{proof}[Proof of the claim]
    Assume that there exists a vertex $v$ in $V(E_1)\cap V(C_2)$. We claim that the case where $V(E_2)\cap V(C_1)\neq \emptyset$ is symmetric. The situation is then as depicted in the right of \Cref{fig: cas2_Claims}. We now set $P'_1:=A_1\cdot P_2[u_0, v]\cdot P_1[v,b_1]$ and $P'_2:=A_2\cdot P_1[u_0, v]\cdot P_2[v,b_2]$. Observe that $P'_1, P'_2$ are also shortest paths in $G$, and that $P'_1, P'_2, P_3$ cover the edges of $G$. In particular, note that $V(P_3)\cap V(P'_1)\subseteq V(P'_2)$, thus \Cref{cl: 3-paths intersects} applies and gives $\pw(G)\leq 3$. 
   \end{proof}

   \begin{figure}[htb]
    \centering  
    \includegraphics[scale=0.75]{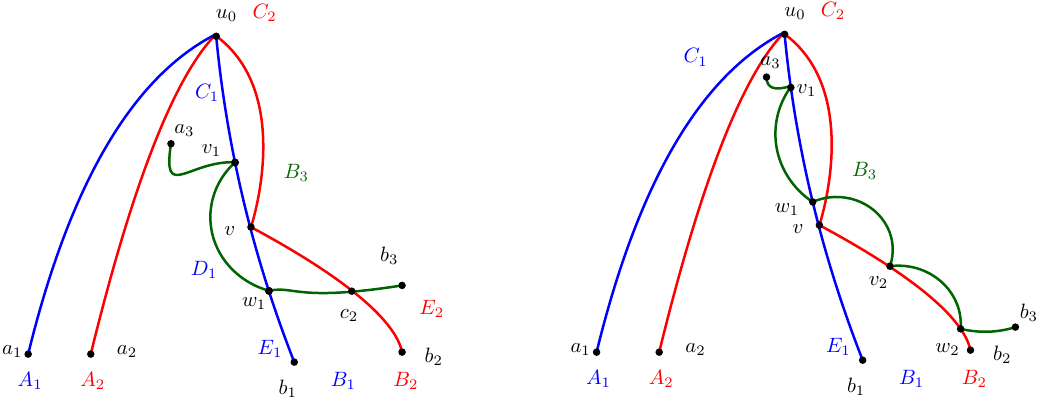}
    \caption{Left: configuration of the proof of \Cref{clm: cas2_D}. In this figure, we depicted the special case where $c_1=v_1$, however note that there is also a symmetric case where $c_1=w_1$, in which case $c_2$ is before $v$ on $P_2$.
    Right: configuration of the proof of \Cref{clm: cas2_CE}.}
    \label{fig: cas2_Claims}
    \end{figure}
    
    Now, observe that Claims \ref{clm: cas2_D} and \ref{clm: cas2_CE}, together with the definition of $v_1, v_2$ imply that we may assume that the set $\sg{v_1, v_2}$ separates in $G$ the subgraph $G_1:=A_1\cdot C_1\cup A_2\cdot C_2$ from the subgraph $G_2:=D_1\cup E_1 \cup D_2\cup E_2\cup P_3$ (see \Cref{fig: cas2}).
    By \Cref{prop: skewer}, $G_1$ is a skewer, and admits a path decomposition $\cP'_1$ of width at most $2$ such that $v_1$ is in the rightmost bag of $\cP'_1$. By adding $v_2$ to every bag of $\cP_1'$, we obtain a path decomposition $\cP_1$ of $G_1$ of width at most $3$ such that $v_1,v_2$ is the rightmost bag of $\cP_1$. 
    It thus remains to show that there exists a path decomposition of width $3$ of $G_2$ whose leftmost bag is $\sg{v_1, v_2}$.

    We now have four distinct cases, two of them being symmetric, depicted on \Cref{fig: cas2_3finals} according to the order on which the vertices $v_1, w_1, v_2, w_2$ appear on $P_3$. 
    
   \begin{figure}[htb]
    \centering  
    \includegraphics[scale=0.7]{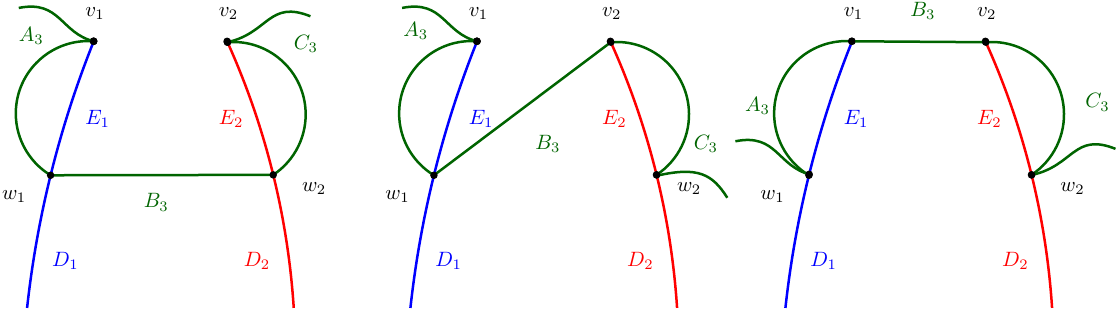}
    \caption{Possible ways for the path $P_3$ (in dark green) to intersect $P_1$ (in blue) and $P_2$ (in red) in $G_2$. Left: the case where $v_1, w_1, w_2, v_2$ appear in this order on $P_3$. Middle: the case where $v_1, w_1, v_2, w_2$ appear in this order on $P_3$. The case where $w_1, v_1, w_2, v_2$ appear in this order on $P_3$ is symmetric.
    Right: the case where $w_1, v_1, v_2, w_2$ appear in this order on $P_3$.}
    \label{fig: cas2_3finals}
    \end{figure}
    
    Note that by Claims \ref{clm: cas2_D} and \ref{clm: cas2_CE}, the only possible common vertices between $P_1$ and $P_2$ in $G_2$ are in $V(D_1)\cap V(D_2)$, and possibly, the vertices $v_1, v_2$ in the special case when $v_1=v_2$. In particular, by definition of $w_1, w_2$, the set $\sg{w_1, w_2}$ separates in $G_2$ the vertices of the graph $G_3:=D_1\cup D_2$ from the remaining vertices. By \Cref{prop: skewer}, and as $u_0\in V(P_1)\cap V(P_2)$, $G_3$ admits a path decomposition of width at most $2$ whose leftmost bag is $\sg{w_1, w_2}$. To conclude our proof, it thus only remains to show in each case that there exists a path decomposition of $G_4:=P_3\cup E_1\cup E_2$ of width at most $3$, whose leftmost bag is $\sg{v_1, v_2}$, and whose rightmost bag is $\sg{w_1, w_2}$.
    We claim that it is then not hard to construct separately in each case such a decomposition. 
\end{proof}

The next two lemmas respectively tackle cases \ref{it: 3} and \ref{it: 4}.

\begin{lemma}\label{lem:intersection semi alternatively}
    If $P_3$ intersects the subpaths $A_2, B_1, B_2$ in this order, then $\pw(G) \le 3$. 
\end{lemma}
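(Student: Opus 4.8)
The plan is to carve $G$ into a few pieces by small separators, recognising two of the pieces as \emph{skewers} (handled by \Cref{prop: skewer}) and the last one as a configuration already covered by \Cref{cl:3-paths 2 intersections}, and then to assemble the pieces' path-decompositions. Throughout, write $c_1$ for the first vertex of $P_3$ on $P_1\cup P_2$ (so $c_1\in V(A_2)$ by the hypothesis), $p$ for the last vertex of $P_3$ on $A_2$, and $q$ for the first vertex of $P_3$ on $B_1$. Invoking \Cref{lem:alternatively intersect} we may assume $P_3$ does not bounce between any two of $A_2,B_1,B_2$, so along $P_3$ its hits on $A_2$, then on $B_1$, then on $B_2$ come in three consecutive blocks. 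The first thing I would prove, using \Cref{clm: ADN} exactly in the way it is used in the proof of \Cref{prop: skewer} (vertices common to two shortest paths occur in the same order along both), are the bookkeeping facts: $V(A_2)\cap V(B_1)=\sg{u_0}$; every vertex of $V(P_3)\cap V(A_2)$ lies on $A_2[c_1,p]$ and precedes $q$ on $P_3$; every vertex of $V(P_3)\cap V(B_1)$ lies on $B_1[q,b_1]$ and comes after $p$ on $P_3$. From these, together with \Cref{rem: A1}, it is a routine check that $\sg{u_0,q}$ separates $G$ into $G_1:=A_2\cup P_3[a_3,q]$ and $G_2':=A_1\cup B_1\cup B_2\cup P_3[q,b_3]$, with $V(G_1)\cap V(G_2')=\sg{u_0,q}$ and $G=G_1\cup G_2'$.

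For $G_1$: it is edge-covered by the two shortest paths $A_2$ and $P_3[a_3,q]$, hence is a skewer by \Cref{prop: skewer}; its rightmost skewer vertex is $p$, and $u_0$, $q$ are precisely the two free endvertices of the two paths forming the last piece (the cherry $A_2[p,u_0]\cup P_3[p,q]$ at $p$). By the ``moreover'' part of \Cref{prop: skewer} this skewer therefore has a path-decomposition of width at most $2$ whose rightmost bag is $\sg{u_0,q}$.

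It remains to build a path-decomposition of $G_2'$ of width at most $3$ whose leftmost bag contains $\sg{u_0,q}$; gluing it to the decomposition of $G_1$ then finishes the proof. Since $A_1$ is a pendant path attached at $u_0$ (\Cref{rem: A1}), it suffices to treat $G_2:=B_1\cup B_2\cup P_3[q,b_3]$ and at the end thread $A_1$ back in at the left (carrying $u_0$ and $q$ along in its bags, which keeps the width at most $3$ and keeps a bag containing $\sg{u_0,q}$ at the left end). In $G_2$, let $r$ be the first vertex of $P_3[q,b_3]$ on $B_2$ (it exists, as $P_3$ meets $B_2$). Using the same ordering facts one shows $r$ is a \emph{cut vertex} of $G_2$, splitting it as $G_2^{\mathrm{left}}:=B_1\cup B_2[u_0,r]\cup P_3[q,r]$ and $G_2^{\mathrm{right}}:=B_2[r,b_2]\cup P_3[r,b_3]$, with $V(G_2^{\mathrm{left}})\cap V(G_2^{\mathrm{right}})=\sg{r}$ and $\sg{u_0,q}\subseteq V(G_2^{\mathrm{left}})$. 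Now $G_2^{\mathrm{right}}$ is edge-covered by the two shortest paths $B_2[r,b_2]$ and $P_3[r,b_3]$, which share the endvertex $r$, so it is a skewer with $r$ at one end and has a width-$\le 2$ decomposition whose leftmost bag contains $r$. And $G_2^{\mathrm{left}}$ is edge-covered by the three shortest paths $B_1$, $B_2[u_0,r]$, $P_3[q,r]$, where $B_2[u_0,r]$ meets $B_1$ only at $u_0$ and meets $P_3[q,r]$ only at $r$ — that is, in exactly two vertices in total — so $\pw(G_2^{\mathrm{left}})\le 3$ by \Cref{cl:3-paths 2 intersections}; re-running that argument while tracking bags yields a width-$\le 3$ decomposition of $G_2^{\mathrm{left}}$ with leftmost bag $\sg{u_0,q}$ and rightmost bag containing $r$. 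Concatenating the decompositions of $G_2^{\mathrm{left}}$ and $G_2^{\mathrm{right}}$ along their bags containing $r$ gives the required decomposition of $G_2$.

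The main obstacle is the last step: \Cref{cl:3-paths 2 intersections} as stated only outputs ``$\pw\le 3$'', whereas here I need a decomposition of $G_2^{\mathrm{left}}$ with prescribed extremal bags $\sg{u_0,q}$ (its two distinguished vertices both lie on the path $B_1$) and a bag containing $r$. That lemma's proof branches into a ``same-piece'' case and a ``re-routing'' case reducing to \Cref{cl: 3-paths intersects}, and in each branch one must check that $u_0,q$ can be kept in the first bag and $r$ in the last; this is the only place where one has to reopen the proof of an earlier lemma rather than cite it. Everything else — the two separator/cut-vertex verifications and the pendant-path bookkeeping for $A_1$ — is straightforward once the ordering facts from \Cref{clm: ADN} are in hand.
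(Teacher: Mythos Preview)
Your decomposition strategy is different from the paper's (the paper simply relabels: it lets $w$ be the first vertex of $P_3$ on $A_2$, observes that $w\in V(P_2)\cap V(P_3)\setminus V(P_1)$, and then either $P_3[a_3,w]$ meets $P_1$, in which case $P_3$ bounces and \Cref{lem:alternatively intersect} applies, or it does not, in which case $w$ can play the role of $u_0$ with $P_1$ and $P_3$ swapped and \Cref{lem:not intersect both} applies). Your direct cut-and-glue approach is a reasonable idea, but as written it has a real gap.

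The problem is the separator claims about $B_1\cap B_2$. You assert that $V(A_2)\cap V(B_1)=\{u_0\}$ and later that $B_2[u_0,r]$ meets $B_1$ only at $u_0$. The first statement is not provable from \Cref{clm: ADN} alone; what ADN gives (via a short distance computation) is only that $B_1$ cannot meet \emph{both} $A_2-u_0$ and $B_2-u_0$, so one of the two must be handled by a symmetry swap of $A_2$ and $B_2$. But once you make that swap and have $V(A_2)\cap V(B_1)=\{u_0\}$, nothing prevents $B_1$ from meeting $B_2-u_0$ in many vertices, and none of your ``ordering facts'' constrain where those intersections sit relative to $r$. Concretely, if $v\in V(B_1)\cap V(B_2)$ with $v$ strictly between $u_0$ and $r$ on $B_2$, then $v\in V(G_2^{\mathrm{left}})$ via $B_1$ and the hypothesis ``$B_2[u_0,r]$ meets $B_1$ only at $u_0$'' fails, so your invocation of \Cref{cl:3-paths 2 intersections} breaks; and if instead $v$ lies on $B_2[r,b_2]$, then $v\in V(G_2^{\mathrm{left}})\cap V(G_2^{\mathrm{right}})\setminus\{r\}$ and $r$ is not a cut vertex of $G_2$. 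A secondary issue you gloss over is the orientation of the skewer $G_1=A_2\cup P_3[a_3,q]$: the common vertices of $A_2$ and $P_3[a_3,q]$ appear in the same order on both paths only up to a global reversal, so $u_0$ and $q$ may end up at \emph{opposite} ends of the skewer rather than in the same terminal piece; this is repairable by carrying one of them through all bags (at the cost of width $3$ instead of $2$), but it is not what you wrote.
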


\begin{proof}
    As $u_0\notin V(P_3)$, note that $P_3$ intersects the subpaths $A_2-u_0, B_1-u_0, B_2-u_0$.
    By \Cref{clm: ADN}, $B_1$ cannot intersect both $A_2 - u_0$ and $B_2 - u_0$. 
    By symmetry, assume that $B_1$ does not intersect $A_2 - u_0$. 
    Let $w$ be the vertex of $V(A_2) \cap V(P_3)$ which is the closest from $a_3$, and let $A_3:=P_3[a_3, w]$. As $B_1$ does not intersect $A_2\setminus w$, and as $u_0\notin V(P_3)$, we have $w\in V(P_3)\cap V(A_2)\setminus V(B_1)$. Now, observe that if $A_3$ intersects the path $P_1$, then as $P_3$ is disjoint from $A_1$, it implies that $P_3$ bounces between $A_2$ and $B_1$, in which case \Cref{lem:alternatively intersect} applies and gives $\pw(G)\leq 3$. Assume now that $A_3$ and $P_1$ are disjoint. Then, we claim that we are in the configuration of Case \ref{it: 2}, with $w$ playing the role of $u_0$, $P_1$ playing the role of $P_3$, and $P_3$ playing the role of $P_1$. In particular, 
    \Cref{lem:not intersect both} then applies and gives $\pw(G)\leq 3$.
\end{proof}

\begin{lemma}\label{lem:intersect twice then once}
    If $P_3$ intersects the subpaths $A_2, B_2, B_1$ in this order, then $\pw(G) \le 3$. 
\end{lemma}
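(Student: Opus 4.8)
The plan is to normalise the picture, dispose of a few degenerate overlaps with the earlier lemmas, and then \emph{re-root} the whole configuration so that it falls under \Cref{lem:not intersect both} (the case where $P_3$ is disjoint from $A_2$). First I would orient $P_3$ from $a_3$ to $b_3$ so that the hypothesis holds, i.e.\ along $P_3$ the touches with $A_2$ come first, then those with $B_2$, then those with $B_1$ (recall that $P_3$ is disjoint from $A_1$ by \Cref{rem: A1} and that $u_0\notin V(P_3)$). By \Cref{lem:alternatively intersect} we may assume that $P_3$ bounces between no two of $A_2,B_2,B_1$; together with \Cref{clm: ADN} applied to the pairs $(P_1,P_3)$ and $(P_2,P_3)$, this shows that the first vertex $w$ of $P_3$ (starting from $a_3$) lying on $V(P_1)\cup V(P_2)$ belongs to $A_2$, and that the initial segment $A_3:=P_3[a_3,w]$ meets $V(P_1)\cup V(P_2)$ only at $w$.

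Second, I would clear the degenerate sub-cases. If $V(P_3)\cap V(P_1)\subseteq V(P_2)$ or $V(P_3)\cap V(P_2)\subseteq V(P_1)$, then \Cref{cl: 3-paths intersects} gives $\pw(G)\le 3$, so assume otherwise. If $V(P_1)\cap V(P_2)\ne\{u_0\}$ (in particular if $w\in V(P_1)$), then $P_1$ and $P_2$ share a vertex $v\ne u_0$, which lies on $B_1$ by \Cref{rem: A1}; in that case I would re-cover $G$ by three shortest paths, two of which contain both $u_0$ and $v$, obtained by swapping the $u_0v$-portions of $P_1$ and $P_2$ and re-attaching their remaining ends (choosing $v$ at minimum distance from $u_0$ so that the swapped paths remain simple and still edge-cover $G$), and then finish via \Cref{cl: 3-paths intersects} or \Cref{lem:two common points}. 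Hence from now on we may assume $w\notin V(P_1)$ and $V(P_1)\cap V(P_2)=\{u_0\}$.

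Third, I would re-root the configuration at $w$, taking $P_3$ (with marked endpoint $a_3$) as the new ``$P_1$'', $P_2$ as the new ``$P_2$'', and $P_1$ as the new ``$P_3$''. One checks that $w$ plays the role of the crossing vertex: $w\in V(P_3)\cap V(P_2)\setminus V(P_1)$, $w$ is the closest such vertex to $a_3$ along $P_3$ (by the first step), $A_3$ is the new ``$A_1$'' and meets the other two new paths only at $w$ (the new instance of \Cref{rem: A1}), the new ``$B_2$'' contains $u_0$, and $P_1$ --- the new ``$P_3$'' --- is disjoint from the new ``$A_2$'' $=A_2[a_2,w]$, since $V(P_1)\cap V(A_2)=\{u_0\}$ and $u_0$ lies strictly beyond $w$ on $A_2$. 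Thus the re-rooted configuration is exactly the one treated by \Cref{lem:not intersect both}, which yields $\pw(G)\le 3$ for the same graph $G$.

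The main obstacle is the second step: one must verify that in the case $V(P_1)\cap V(P_2)\ne\{u_0\}$ the re-covering genuinely reduces to an already-proved lemma and not circularly back to the present statement, and one must carry out the simplicity and edge-cover bookkeeping for the swapped paths (in particular when the extra common vertex $v$ also lies on $P_3$). By contrast, the first and third steps amount to tracking which block of $P_3$ meets which of $A_2,B_2,B_1$ and re-reading \Cref{rem: A1} in the re-rooted picture.
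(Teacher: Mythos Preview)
Your overall strategy --- re-root so that \Cref{lem:not intersect both} applies --- is exactly the paper's, and your third step (re-rooting at the first vertex $w$ of $P_3$ on $A_2$, with $P_1$ cast as the new ``$P_3$'') matches one of the paper's two cases. The gap is entirely in your second step.

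The swap you propose does not deliver what you claim. After exchanging the $u_0v$-portions of $P_1$ and $P_2$ you obtain two shortest paths $P'_1,P'_2$ that both still pass through $u_0$ and $v$; but $u_0\notin V(P_3)$, so \Cref{lem:two common points} cannot fire, and there is no reason for any pairwise intersection among $P'_1,P'_2,P_3$ to be contained in the third path, so \Cref{cl: 3-paths intersects} does not fire either. Concretely, take the residual configuration in which $B_1$ meets $A_2-u_0$ at some $v$ strictly between $a_2$ and $w$ (this is exactly when your re-rooting at $w$ fails, since then $P_1$ meets the new ``$A_2$''$=A_2[a_2,w]$); the swapped paths still leave $P_3$ with private intersections on both of them, and nothing collapses. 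You flagged this step as the ``main obstacle'', and it is a genuine one: as written, the argument does not close.

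The paper resolves this without any swap. Using \Cref{clm: ADN} on $P_1$ and $P_2$, it observes that $B_1$ cannot meet both $A_2-u_0$ and $B_2-u_0$; this gives a clean dichotomy. When $B_1\cap(A_2-u_0)=\emptyset$, the re-rooting at the $A_2$-end (your step~3) goes through. When $B_1\cap(B_2-u_0)=\emptyset$, the paper re-roots instead at the vertex $y\in V(B_2)\cap V(P_3)$ closest to $b_2$: one takes $P_2$ (read from $b_2$) as the new ``$P_1$'', $P_3$ as the new ``$P_2$'', and $P_1$ as the new ``$P_3$''; then $D_2:=P_2[y,b_2]$ meets $P_1\cup P_3$ only at $y$, and $P_1$ is disjoint from $D_3:=P_3[a_3,y]$ (because $A_1$ avoids $P_3$ and a hit of $B_1$ on $D_3$ would make $P_3$ bounce), so \Cref{lem:not intersect both} applies with $y$ playing the role of $u_0$. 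Replacing your swap by this second re-rooting is exactly the missing idea.
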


\begin{proof}
As $u_0\notin V(P_3)$, note that $P_3$ also intersects the subpaths $A_2-u_0, B_2-u_0, B_1-u_0$. 
We let $x,y,z$ be three distinct vertices of $P_3$ such that $x\in V(A_2), y\in V(B_2)$ and $z\in V(A_1)$, and such that $y$ is an internal vertex of $P_3[x,z]$. 
    
Note that by \Cref{clm: ADN}, no vertex of $P_3[y, b_3]$ belongs to $A_2$, and no vertex of $P_3[a_3, x]$ belongs to $B_2$. Moreover, by \Cref{lem:alternatively intersect}, no vertex on $P_3[z,b_3]$ belongs to $P_2-z$. In particular, it implies that we may assume that $x$ is the vertex from $V(A_2)\cap V(P_3)$ that minimizes the distance $d(a_2, x)$, and that $y$ is the vertex from $V(B_2) \cap V(P_3)$ that minimizes the distance $d(b_2, y)$.

Assume first that $B_1$ does not intersect $B_2 - u_0$.
Note that by \Cref{rem: A1}, it implies that $y$ is in $V(P_2) \cap V(P_3) \setminus V(P_1)$.
We set $D_2 := P_2[y,b_2]$ and $D_3 := P_3[a_3,y]$.
By our minimality assumption, $y$ is must then be the only intersection of $D_2$ with $P_1\cup P_3$.
By \Cref{lem:alternatively intersect}, no vertex on $D_3$ can belong to $B_1$.
Hence, we have that $P_1$ does not intersect $D_2$ nor $D_3$. By \Cref{lem:not intersect both}, with $y$ playing the role of $u_0$, $P_1$ playing the role of $P_3$ and $D_2$ playing the role of $A_1$, we obtain that $\pw(G) \leq 3$.  

We now assume that $B_1$ intersects $B_2 - u_0$. In particular, $B_1$ does not intersect $A_2 - u_0$ by \Cref{clm: ADN}.  By our choice of $x$ and \Cref{rem: A1}, observe that $C_2 := P_2[a_2, x]$ only intersects $P_1\cup P_3$ in $x$. We now set $A_3:=P_3[a_3, x]$. Note that by \Cref{clm: ADN} and our choice of $x$, $A_3$ only intersects $P_2$ in $x$. Moreover, note that as $A_3$ is disjoint from $A_1$, if $A_3$ intersects $B_1$, then $P_3$ bounces between $B_1$ and $A_2$, so by \Cref{lem:alternatively intersect} we have $\pw(G)\leq 3$. We may thus assume that $x$ is the vertex in the intersection of $A_3$ and $P_1\cup P_2$. Note that we are then in the situation of \Cref{lem:not intersect both}, with $x$ playing the role of $u_0$, $P_1$ playing the role of $P_3$ and $C_2, A_3$ playing the respective roles of $A_1, A_2$. We thus conclude that $\pw(G)\leq 3$. 
\end{proof}

\begin{proof}[Proof of \Cref{thm: 3-paths}]
As explained above, observe that, up to replacing $P_2$ by $P_2^{-1}$, one of the four cases between \ref{it: 1}, \ref{it: 2}, \ref{it: 3} and \ref{it: 4} should occur. We thus conclude that $\pw(G)\leq 3$ after applying according accordingly \Cref{lem:alternatively intersect}, \Cref{lem:not intersect both}, \Cref{lem:intersection semi alternatively} and \Cref{lem:intersect twice then once}.
\end{proof}

\section{Coverings by Isometric Subtrees}
\label{sec: trees}

\paragraph*{Treewidth.}A \emph{tree decomposition} of a graph $G$ is a pair $(T,\mathcal V)$ where $T$ is a tree and $\mathcal V=(V_t)_{t\in V(T)}$ is a family of subsets $V_t$ of $V(G)$ such that:
\begin{itemize}
 \item $V(G)=\bigcup_{t\in V(T)}V_t$;
 \item for every nodes $t,t',t''$ such that $t'$ is on the unique path of $T$ from $t$ to $t''$, $V_t\cap V_{t''}\subseteq V_{t'}$; 
 \item every edge $e\in E(G)$ is contained in an induced subgraph $G[V_t]$ for some $t\in V(T)$. 
\end{itemize}

The subsets $V_t$ are called the \emph{bags} of the tree decomposition $(T,\mathcal V)$, and the tree $T$ is called the \emph{decomposition tree}. The \emph{width} of a tree decomposition is the maximum size of a bag minus $1$. The \emph{treewidth} of a graph $G$, denoted $\tw(G)$ is defined as the minimum possible width of a tree decomposition of $G$. Note that path decompositions of $G$ correspond exactly to the tree decompositions of $G$ whose decomposition tree is a path. In particular, we have in general $\tw(G)\leq \pw(G)$. 

Recall that a graph $G$ is \emph{$2$-connected} if it is connected of order at least $3$, and has no \emph{cutvertex}, that is, no vertex $v$ such that $G-v$ is not connected anymore. The \emph{blocks} of  $G$ are the inclusion-wise maximal sets of vertices $X\subseteq V(G)$ such that $G[X]$ is connected and has no cutvertex\footnote{Note that here, blocks are not necessarily $2$-connected, as we allow them to have order less than $3$.}. The following is a well-known and easy fact. 

\begin{proposition}[folklore]\label{prop:tw_blocks}
  The treewidth of a graph is equal to the maximum treewidth of its blocks (maximal $2$-connected components).
  Put differently, for every $v\in V(G)$, if $X_1, \ldots, X_k$ denote the connected components of $G-v$, then $\tw(G)= \max_{1\leq i\leq k} \tw(G[X_i\cup \sg{v}]).$
\end{proposition}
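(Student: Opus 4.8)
The plan is to establish the ``put differently'' reformulation directly, and then derive the statement about blocks from it by a short induction. Write $H_i := G[X_i \cup \sg{v}]$; note that the sets $X_i$ partition $V(G)\setminus\sg{v}$ and that $\sg{v}$ separates them from one another in $G$, so the $H_i$ pairwise intersect exactly in $\sg{v}$, and every edge of $G$ lies inside some $H_i$ (no edge of $G$ joins two distinct $X_i$).

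The inequality $\tw(G)\ge \max_i\tw(H_i)$ (and, for the block version, $\tw(G)\ge \tw(G[B])$ for every block $B$) is the easy direction: treewidth is monotone under taking induced subgraphs, since from a tree-decomposition $(T,(V_t)_t)$ of $G$ and any $Y\subseteq V(G)$ one obtains the tree-decomposition $(T,(V_t\cap Y)_t)$ of $G[Y]$ of no larger width. For the reverse inequality I would glue optimal tree-decompositions of the pieces. For each $i$, fix a tree-decomposition $(T_i,(V^i_t)_{t\in V(T_i)})$ of $H_i$ of width $\tw(H_i)$; since $v\in V(H_i)$, the nodes of $T_i$ whose bag contains $v$ form a nonempty subtree, so I may pick one such node $t_i$. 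Let $T$ be obtained from the disjoint union of the $T_i$ by adding one new node $r$ adjacent to every $t_i$, and set $V_r:=\sg{v}$, keeping all other bags unchanged. Then $T$ is a tree, every vertex of $G$ appears in some bag, every edge of $G$ lies inside some $H_i$ and hence in a bag of $T_i$, and — the one point requiring a moment's thought — the bags containing $v$ consist of $\sg{r}$ together with one nonempty subtree of each $T_i$ passing through $t_i$, which form a connected subtree of $T$ precisely because $r$ was attached at each $t_i$. The width of $T$ is $\max(0,\max_i\tw(H_i))=\max_i\tw(H_i)$, the first equality failing only when $G$ has no edges, in which case $\tw(G)=0$ and the claim is trivial. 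This yields $\tw(G)=\max_i\tw(H_i)$ for every graph $G$ and every vertex $v$.

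Finally, to obtain the block statement I would induct on $\abs{V(G)}$. If $G$ is disconnected, apply the induction hypothesis to each component (its blocks are exactly the blocks of $G$ lying inside it, and the treewidth of $G$ is the maximum of the treewidths of the components). If $G$ is connected with no cutvertex, then $G$ is its own unique block and there is nothing to prove (the cases $\abs{V(G)}\le 2$ being immediate by inspection). Otherwise let $v$ be a cutvertex of $G$ and $X_1,\dots,X_k$ ($k\ge 2$) the components of $G-v$, so that each $H_i$ has strictly fewer vertices than $G$. The one combinatorial fact to record is that the blocks of $G$ are exactly the blocks of the $H_i$ taken all together: no block of $G$ can meet two distinct $X_i$ (a block is connected and $\sg{v}$ separates the $X_i$, so such a block would contain $v$ and have it as a cutvertex), hence each block of $G$ is contained in some $H_i$; and maximality transfers in both directions because each $H_i$ is an induced subgraph, so $G[B]=H_i[B]$ for $B\subseteq V(H_i)$. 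Combining this with the formula above and the induction hypothesis applied to each $H_i$ gives
$$\tw(G)=\max_i\tw(H_i)=\max_i\ \max_{B\text{ a block of }H_i}\tw(G[B])=\max_{B\text{ a block of }G}\tw(G[B]),$$
which is the claim. There is no genuine obstacle here — the result is folklore — the only care needed being the connectivity axiom for $v$ in the glued decomposition and the bookkeeping identifying the blocks of $G$ with the blocks of the $H_i$, both of which rest on the single observation that $\sg{v}$ separates the components $X_i$.
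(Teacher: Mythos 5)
Your proof is correct. The paper states this proposition as folklore and supplies no proof of its own, so there is nothing to compare against; your argument — monotonicity under induced subgraphs for one inequality, gluing optimal tree-decompositions of the $H_i$ at a fresh node with bag $\sg{v}$ for the other, and the routine identification of the blocks of $G$ with those of the $H_i$ — is exactly the standard argument the authors are implicitly invoking.
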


The following remark is immediate.

\begin{remark}
 \label{rem: blocks-2cover}
 Let $G$ be a graph edge-coverable by $k$ isometric trees, and $X$ be a block of $G$. Then $G[X]$ is also edge-coverable by $k$ isometric trees.
\end{remark}

We now prove our main result in this section, which we restate.
\mainTrees*

\begin{proof}
  Let $T_1,T_2$ denote two isometric subtrees of $G$ that cover all its edges. By \Cref{prop:tw_blocks} and \Cref{rem: blocks-2cover}, we may assume without loss of generality that $G$ is $2$-connected. In particular, $G$ has no vertex of degree $1$.
  Moreover, up to removing some edges incident to the leaves of $T_1$ and $T_2$, we may further assume that every edge of $T_1$ which is incident to a leaf of $T_1$ is not an edge of $T_2$.
  Let $u$ be a leaf of $T_1$. As $G$ has no vertex of degree $1$, $u$ must also be a vertex of $T_2$.
  
  \begin{claim}
  \label{clm: vertical}
   Every edge of $G$ is vertical with respect to $u$. 
  \end{claim}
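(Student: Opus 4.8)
The plan is to show that for every edge $xy$ of $G$ one has $\dist_G(u,x)\neq\dist_G(u,y)$; since $x,y$ are adjacent this forces $|\dist_G(u,x)-\dist_G(u,y)|=1$, so the single-edge path $xy$ has exactly one vertex at each of two consecutive distances from $u$ and none elsewhere, i.e.\ it is vertical with respect to $u$. As $E(G)=E(T_1)\cup E(T_2)$, it is enough to establish this separately for the edges of $T_1$ and for the edges of $T_2$.

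First I would treat $T_1$. Since $T_1$ is an isometric subtree containing $u$, its distance from $u$ coincides with that of $G$: for any $v\in V(T_1)$ the unique $uv$-path in $T_1$ is (trivially) a shortest path of $T_1$, hence a shortest path of $G$ by isometry, so $\dist_{T_1}(u,v)=\dist_G(u,v)$. Now root $T_1$ at $u$; every edge $xy\in E(T_1)$ joins a vertex to its parent, so $\{\dist_{T_1}(u,x),\dist_{T_1}(u,y)\}=\{d,d+1\}$ for some $d\geq 0$, and by the previous sentence the same holds for $\dist_G$. Hence every edge of $T_1$ is vertical with respect to $u$.

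Next I would check that $u\in V(T_2)$, so that the exact same argument applies to $T_2$. This is where $2$-connectedness is used: $G$ has no vertex of degree $1$, so $\deg_G(u)\geq 2$, while $u$, being a leaf of $T_1$, is incident to exactly one edge of $T_1$. Therefore $u$ is incident to an edge outside $E(T_1)$, which lies in $E(T_2)$ because $E(G)=E(T_1)\cup E(T_2)$; thus $u\in V(T_2)$. Repeating the argument of the previous paragraph verbatim with $T_2$ in place of $T_1$ (using that $T_2$ is isometric and contains $u$) shows that every edge of $T_2$ is vertical with respect to $u$, which completes the proof.

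I do not anticipate a genuine obstacle: the claim is essentially the remark that an isometric subtree, rooted at one of its own vertices, is ``levelled'' by the $G$-distance from that vertex, plus the small bookkeeping needed to certify $u\in V(T_2)$. The only point demanding a line of care is that last membership, which relies on the absence of degree-$1$ vertices together with $u$ being a leaf of $T_1$; note that the normalization ``no leaf-edge of $T_1$ belongs to $T_2$'' is not needed for this particular claim and will only be exploited later in the argument.
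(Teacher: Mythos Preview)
Your proof is correct and follows essentially the same approach as the paper: both use that $u\in V(T_i)$ for $i=1,2$ together with the isometry of $T_i$ to conclude $\dist_{T_i}(u,\cdot)=\dist_G(u,\cdot)$, from which the verticality of every edge of $T_i$ is immediate. The paper phrases this by contradiction (an edge with equal endpoint-distances would close a cycle in $T_i$) while you phrase it directly via rooting at $u$, but the content is the same; note also that the paper had already recorded $u\in V(T_2)$ just before the claim, so your re-derivation of that fact is redundant though harmless.
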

  
\begin{proof}[Proof of the Claim]
    Suppose for a contradiction that there exists some edge $vw\in E(G)$ which is not vertical with respect to $u$. Then it implies that $v$ and $w$ belong to the same layer of the BFS-layering starting from $u$, that is, $d_G(u,v)=d_G(u,w)$. We let $i\in \sg{1,2}$ be such that $vw\in E(T_i)$. 
    As $T_i$ is an isometric subtree of $G$, we then also have $d_{T_i}(u,v)=d_{T_i}(u,w)$. We then obtain a contradiction as such an edge $vw$ would create a cycle in $T_i$. 
\end{proof}
  
  We now consider the subtree $T'_1$ of $T_1$ formed by taking the union of all paths $P$ between $u$ and a vertex of $T_1$ such that all internal vertices of $P$ are in $V(T_1)\setminus V(T_2)$.
  As $G$ is $2$-connected, all leaves of $T'_1$ are in $T_2$.
  Let $G_v$ be the subgraph of $G$ induced by the descendants of $v$, i.e., by vertices $x$ for which there exists some $ux$-path $P$ which contains $v$ and which is vertical with respect to $u$. Note that in particular, $v\in V(G_v)$.
  \begin{claim}\label{clm: cutvertex}
    Each leaf $v$ of $T'_1$ which is distinct from $u$ separates $G_v$ from the rest of $G$.
  \end{claim}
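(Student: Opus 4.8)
The plan is to prove the stronger statement that every edge of $G$ with an endpoint in $V(G_v)\setminus\{v\}$ has \emph{both} endpoints in $V(G_v)$; this yields the separation at once, since then in $G-v$ the component of any vertex of $V(G_v)\setminus\{v\}$ stays inside $V(G_v)\setminus\{v\}$ and hence misses $V(G)\setminus V(G_v)$, which is precisely the assertion that $v$ separates $G_v$ from the rest of $G$.

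First I would record two easy facts. (a) A vertex $x$ lies in $V(G_v)$ if and only if $d_G(u,x)=d_G(u,v)+d_G(v,x)$: a $ux$-path through $v$ that is vertical with respect to $u$ splits into a shortest $uv$-path and a shortest $vx$-path (a subpath of a vertical path is vertical, hence shortest), and conversely concatenating a shortest $uv$-path with a shortest $vx$-path gives a path vertical with respect to $u$ exactly when the two lengths add up. (b) Since $v$ is a leaf of $T'_1\subseteq T_1$ distinct from $u$, we have $v\in V(T_1)$, and $v\in V(T_2)$ because all leaves of $T'_1$ lie in $T_2$; moreover $u\in V(T_1)\cap V(T_2)$, and $V(G)=V(T_1)\cup V(T_2)$ since $T_1,T_2$ edge-cover the $2$-connected (so isolated-vertex-free) graph $G$.

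The heart of the argument is a description of $V(G_v)$ inside each tree, which I would establish for $i\in\{1,2\}$ in exactly the same way; this symmetric use of both trees, legitimate precisely because $v\in V(T_1)\cap V(T_2)$, is the crucial point. Root $T_i$ at $u$; as $T_i$ is isometric, $d_{T_i}$ agrees with $d_G$ on pairs of vertices of $T_i$. Then for $x\in V(T_i)$ one has $x\in V(G_v)$ if and only if $v$ lies on the unique $ux$-path of $T_i$, i.e.\ $v$ is an ancestor of $x$ in $T_i$. The ``if'' direction follows from additivity of distances along a tree path together with fact (a) and isometry. For ``only if'', let $w$ be the nearest common ancestor of $v$ and $x$ in $T_i$; expressing $d_{T_i}(u,v)$, $d_{T_i}(u,x)$ and $d_{T_i}(v,x)$ through $w$ and substituting them into $d_{T_i}(u,x)=d_{T_i}(u,v)+d_{T_i}(v,x)$ (fact (a) plus isometry) collapses everything to $2\,d_{T_i}(w,v)=0$, so $w=v$ and $v$ is an ancestor of $x$. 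Using $V(G)=V(T_1)\cup V(T_2)$, this gives $V(G_v)=S_1\cup S_2$, where $S_i$ is the vertex set of the subtree of $T_i$ rooted at $v$ (that is, $v$ together with all its $T_i$-descendants).

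To conclude, take an edge $e=zz'\in E(G)$ with $z\in V(G_v)\setminus\{v\}$ and pick $i$ with $e\in E(T_i)$; then $z,z'\in V(T_i)$, hence $z\in V(G_v)\cap V(T_i)=S_i$, i.e.\ $z$ is a $T_i$-descendant of $v$ distinct from $v$. Its $T_i$-neighbour $z'$ is either a child of $z$ — hence a descendant of $v$, so $z'\in S_i\subseteq V(G_v)$ — or the parent of $z$; since $z$ lies strictly below $v$, this parent is $v$ itself or again a descendant of $v$, so in either case $z'\in V(G_v)$. Thus no edge escapes $V(G_v)\setminus\{v\}$ except possibly to $v$, which is what was needed. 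I do not expect a genuine obstacle: once fact (a) and the ancestor characterization are in place the rest is routine; the one point requiring care is that the ancestor characterization must be applied to \emph{both} $T_1$ and $T_2$, which is exactly why it matters that a non-root leaf of $T'_1$ lies in both trees.
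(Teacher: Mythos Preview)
Your proof is correct and follows essentially the same route as the paper's: both hinge on the nearest-common-ancestor computation in $T_i$ (using that $v$ lies in both $T_1$ and $T_2$ and that $T_i$ is isometric) to show that any $x\in V(T_i)\cap V(G_v)$ has $v$ as a $T_i$-ancestor. The paper packages this as a contradiction argument on a single ``bad'' edge and invokes Claim~\ref{clm: vertical} to orient that edge, whereas you first give the clean characterization $V(G_v)=S_1\cup S_2$ and then read off the conclusion, thereby not needing Claim~\ref{clm: vertical} at all; this is a minor organizational difference rather than a genuinely different idea.
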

  
  \begin{proof}[Proof of the Claim]
      Let $v$ be a leaf of $T'_1$ distinct from $u$, and suppose for a contradiction, that $v$ does not separate $G_v$ from $G-G_v$. Then there exists some edge $e=xy\in E(G)$ with $x\in V(G_v)\setminus \sg{v}$ and $y\in V(G)\setminus V(G_v)$. By \Cref{clm: vertical}, $e$ is vertical with respect to $u$. As we assumed that $y\notin V(G_v)$, we must have $d_G(u,y)<d_G(u,x)$. Let $i\in \sg{1,2}$ be such that $xy\in E(T_i)$. We let $P$ denote the path in $T_i$ connecting $u$ to $x$, which, by \Cref{clm: vertical}, is vertical with respect to $u$ and thus contains $y$. As $y$ is not in $G_v$, observe that $P$ cannot contain $v$. See \Cref{fig: tree-cutvertex} for the case where $i=2$.  
    We let $w$ be the least common ancestor in $T_i$ of $v$ and $x$ (recall that by definition of $T'_1$, $v$ is in $V(T_1)\cap V(T_2)$). As $P$ avoids $v$, we have $v\neq w$.
    As $T_i$ is an isometric subgraph of $G$, we then have $d_G(v,x)=d_{T_i}(v,x)= d_{T_i}(v,w)+d_{T_i}(w,x)>d_{T_i}(w,x)=d_G(w,x)$. 
    Now, recall that $x\in V(G_v)$, hence there exists some $ux$-path in $G$ from $v$ to $x$ going through $v$, which is vertical with respect to $u$. In particular, as $w$ is an ancestor of $v$ in $T_i$, it then implies that $d_G(v, x)\leq d_G(w,x)$, giving a contradiction (see \Cref{fig: tree-cutvertex}).
\end{proof}
\begin{figure}[htb]
    \centering
    \includegraphics{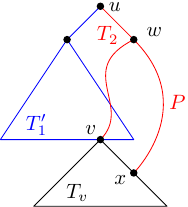}
    \caption{Proof of \Cref{clm: cutvertex} in the case where $xy\in E(T_2)$.}
    \label{fig: tree-cutvertex}
\end{figure}

  As $G$ is $2$-connected, \Cref{clm: cutvertex} implies that for each leaf $v$ of $T'_1$ distinct from $u$, we have $V(G_v)=\sg{v}$. In particular, as we initially chose $u$ to be a leaf of $T_1$, it then implies that $T_1=T'_1$, so the only vertices of $T_1$ that also belong to $T_2$ are its leaves. Observe that up to reproducing symmetrically the same reasoning, where the roles of $T_1$ and $T_2$ are exchanged, we may moreover assume that the only vertices of $T_2$ belonging to $T_1$ are its leaves. In other words, 
  we are left with the case where $T_1$ and $T_2$ have no common internal nodes, and intersect exactly at their leaves.
  
  In the remainder of the proof, we will say that a graph $H$ has a \emph{mirror-decomposition} if there exist trees $T, T'$ such that 
  \begin{itemize}
   \item $H=T\cup T'$;
   \item the trees $T$ and $T'$ have no common internal nodes;
   \item there exists a graph isomorphism $\iota$ from $T$ to $T'$ such that for every $v\in V(T)\cap V(T')$, we have $\iota(v)=v$. 
  \end{itemize}
  
  The proof of \Cref{thm: 2-trees} will be an immediate consequence of the following two Claims.
  
  \begin{claim}
  \label{clm: reconstruction}
    $G$ has a mirror-decomposition, with respect to the trees $T_1$ and $T_2$.
  \end{claim}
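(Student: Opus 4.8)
The plan is to build the required isomorphism by labelling each vertex of $T_1$ and of $T_2$ by its vector of distances to the common set $L:=V(T_1)\cap V(T_2)$, and then to argue that these two labellings produce exactly the same set of vectors, in an adjacency-preserving way. Recall that at this stage $L$ is simultaneously the leaf set of $T_1$ and of $T_2$, that $T_1$ and $T_2$ share no edge, and that $G=T_1\cup T_2$. Since a tree has a unique path between any two of its vertices, isometry of $T_i$ gives $d_{T_i}(x,y)=d_G(x,y)$ for all $x,y\in V(T_i)$; in particular $T_1$ and $T_2$ induce the very same metric $d:=d_G|_L$ on $L$. Also, since every leaf of $T_i$ lies in $L$, every vertex of $T_i$ that is not in $L$ is an internal vertex of $T_i$, hence lies strictly on the $T_i$-path joining two leaves of $T_i$, i.e.\ two elements of $L$.

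I would first define $\phi\colon V(T_1)\to\mathbb Z^{L}$ by $\phi(v):=\big(d_{T_1}(v,\ell)\big)_{\ell\in L}$, and likewise $\phi'\colon V(T_2)\to\mathbb Z^{L}$. Two quick observations: $\phi$ and $\phi'$ are injective (in any tree, if two vertices had the same distance to every leaf, then, removing from the tree the edge of their joining path incident to one of them and choosing a leaf in the part containing that endpoint, one reaches a contradiction); and $\phi$ and $\phi'$ restrict to the same map on $L$, because $d_{T_1}(\ell,\ell')=d(\ell,\ell')=d_{T_2}(\ell,\ell')$. Consequently, once I establish that $\phi$ and $\phi'$ have the same image, the composition $\iota:=(\phi')^{-1}\circ\phi$ is a well-defined bijection $V(T_1)\to V(T_2)$ that fixes $L$ pointwise, and it only remains to see that $\iota$ respects edges.

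The crux is the equality $\operatorname{im}\phi=\operatorname{im}\phi'$, for which I would invoke the standard ``median'' formula in trees: if $S$ is a tree, $a,b,c\in V(S)$, $v$ lies on the $S$-path from $a$ to $b$ with $d_S(a,v)=i$, and $p:=\tfrac12\big(d_S(a,b)+d_S(a,c)-d_S(b,c)\big)$, then $d_S(v,c)=d_S(a,c)-i$ when $i\le p$ and $d_S(v,c)=i+d_S(a,c)-2p$ when $i\ge p$; in particular $d_S(v,c)$ depends only on $i$ and on the pairwise distances between $a$, $b$ and $c$. Now pick $v\in V(T_1)$. If $v\in L$ then $\phi(v)=\phi'(v)\in\operatorname{im}\phi'$. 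Otherwise $v$ lies strictly on the $T_1$-path between some $a,b\in L$; set $i:=d_{T_1}(a,v)$, so that $i\le d_{T_1}(a,b)=d(a,b)=d_{T_2}(a,b)$, and let $v'$ be the vertex on the $T_2$-path from $a$ to $b$ at distance $i$ from $a$. Applying the formula once in $T_1$ and once in $T_2$, with $c$ running over all $\ell\in L$ and noting that every distance that appears is a pairwise distance of elements of $L$ (hence the same in $T_1$ and $T_2$), yields $d_{T_1}(v,\ell)=d_{T_2}(v',\ell)$ for every $\ell\in L$, i.e.\ $\phi(v)=\phi'(v')$. Hence $\operatorname{im}\phi\subseteq\operatorname{im}\phi'$, and equality holds by the symmetric argument.

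Finally, to check that $\iota$ is a graph isomorphism I would use the fact that in a tree $S$ whose leaf set is $L$, two vertices $v,w$ are adjacent if and only if $d_S(v,\ell)$ and $d_S(w,\ell)$ differ by exactly $1$ for every $\ell\in L$: the forward implication is immediate, and for the converse, prolonging the $vw$-path past $w$ to a leaf $\ell$ gives $d_S(v,w)=d_S(v,\ell)-d_S(w,\ell)=1$. As this criterion is phrased entirely in terms of the $L$-coordinates, which $\iota$ preserves, $\iota$ carries edges of $T_1$ to edges of $T_2$ and back; combined with $\iota|_L=\mathrm{id}$ and the absence of common internal vertices, this exhibits the mirror-decomposition $G=T_1\cup T_2$. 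I expect the delicate point to be the image-equality step: one must be sure that every vertex genuinely lies between two elements of $L$ (which is why it matters that all leaves of each $T_i$ belong to $L$) and that only pairwise $L$-distances feed into the median formula, so that the two metrics agreeing on $L$ is exactly what is needed.
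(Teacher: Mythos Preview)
Your proof is correct. Both your argument and the paper's rest on the same underlying observation---that $T_1$ and $T_2$, being isometric in $G$, induce identical metrics on their common leaf set $L$, and that a tree is determined (up to isomorphism fixing the leaves) by the metric it induces on its leaves---but the implementations differ. The paper establishes the latter fact by an inductive reconstruction: it shows that the map sending each triple $(a,b,c)\in L^3$ to the triple of distances from the median $m(a,b,c)$ determines the tree uniquely, peeling off one leaf at a time. You instead produce the isomorphism explicitly as $\iota=(\phi')^{-1}\circ\phi$, where $\phi,\phi'$ send each vertex to its vector of distances to $L$; the median/Gromov-product formula you invoke plays the same role as the paper's median computation, but you use it pointwise to match the two images rather than globally to reconstruct the tree. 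Your route is arguably more direct---no induction on $|L|$, and the adjacency criterion at the end makes the isomorphism property immediate---while the paper's argument is closer in spirit to the classical phylogenetic-tree reconstruction literature. Both are equally valid.
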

  
  \begin{proof}[Proof of the Claim]
      First, observe that as $T_1, T_2$ are both isometric subgraphs of $G$, for every two vertices $a,b$ in $V(T_1)\cap V(T_2)$, we have $d_G(a,b)=d_{T_1}(a,b)=d_{T_2}(a,b)$. In particular, the claim is then immediate if $T_1$ and $T_2$ have at most $2$ leaves. We now assume that $T_1, T_2$ have more than $2$ leaves.
    
    We recall that a basic property of trees is that for every three nodes $a,b,c$ in a tree $T$, there exists a unique \emph{median}, that is, a unique vertex $m(a,b,c)$ of $T$ that belongs to shortest paths between each pair of $a,b$ and $c$. 
    Note that in particular, if $a,b,c$ are pairwise distinct leaves of a tree $T$, then their median is an internal node of $T$.
    
    In what follows, for every $i\in \sg{1,2}$ and every three leaves of $T_i$ (and thus also of $T_{3-i}$), we let $m_i(a,b,c) \in V(T_i)$ denote their median in the tree $T_i$.
    We first claim that for every $x\in\sg{a,b,c}$, we have $d(m_1(a,b,c), x)=d(m_2(a,b,c), x)$.
    To see this, fix $a,b,c$ and for each $x\in \sg{a,b,c}$,
    set $d_{i}(x):=d_{T_i}(m_{i}(a,b,c),x)$. Note first that as $T_1, T_2$ are isometric subgraphs in $G$, we have for every two distinct vertices $x,y\in \sg{a,b,c}$ that $d_1(x)+d_1(y)=d_{T_1}(x,y)=d_{T_2}(x,y)=d_2(x)+d_2(y)$.
    In particular, we then have
    \begin{align*}
     2(d_{1}(a)+d_{1}(b)+d_{1}(c))  &= d_{T_1}(a,b)+d_{T_1}(b,c)+d_{T_1}(a,c) &&\\ \nonumber &=d_{T_2}(a,b)+d_{T_2}(b,c)+d_{T_2}(a,c) &&\\ \nonumber &=2(d_{2}(a)+d_{2}(b)+d_{2}(c)).
    \end{align*}
    By a previous observation, $d_{1}(b)+d_{1}(c)=d_{2}(b)+d_{2}(c)$, hence in particular it implies that $d_{1}(a)=d_{2}(a)$. The equalities $d_{1}(b)=d_{2}(b)$ and $d_{1}(c)=d_{2}(c)$ are obtained identically.

To conclude the proof of \Cref{clm: reconstruction}, we now show that for every tree $T$, the knowledge for each triple $(a,b,c)$ of distinct leaves of the tuples $$(d(m(a,b,c),a),d(m(a,b,c),b),d(m(a,b,c),c))$$ can be used to reconstruct $T$ uniquely. In other words, for a fixed set $L$ of size at least $3$ and a fixed mapping $\varphi: L^3\to \mathbb N^3$, there exists at most one tree $T$ (up to relabelling the internal nodes) such that the leaves of $T$ are exactly the elements of $L$, and such that for every triple $(a,b,c)$ of distinct leaves, we have 
    $$\varphi(a,b,c)=(d(m(a,b,c),a),d(m(a,b,c),b),d(m(a,b,c),c)).$$
    When such an equality holds for all triple of distinct leaves, we say that $T$ \emph{realizes} $\varphi$.
    Note that by previous paragraphs proving such a statement will immediately conclude our proof of \Cref{clm: reconstruction}. 
    We show this statement by induction on the size of $L$. If $|L|=3$, then note that the only possible tree realizing some given triple $\varphi(a,b,c)=(d_a, d_b, d_c)$ is a subdivided star with three branches of size $d_a, d_b$ and $d_c$ respectively connecting $a,b$ and $c$ to the center. Assume now that $|L|\geq 4$ and that the induction hypothesis holds. Suppose that we are given some tree $T$ with set of leaves $L$ realizing some fixed mapping $\varphi: L^3\to \mathbb N^3$. Let $a$ be a leaf of $T$, and let $P$ be the shortest possible path in $T$ connecting $a$ to a vertex $x$ of degree at least $3$ (such a vertex always exists as soon as $T$ has at least $3$ leaves). Then $T-P$ is a tree whose set of leaves is exactly $L\setminus\sg{a}$. In particular, applying induction hypothesis on $L\setminus \sg{a}$ implies that $T-P$ is the unique possible tree realizing the restriction of $\varphi$ on $(L\setminus \sg{a})^3$. Moreover, as $x$ has degree at least $3$ in $T$, there exist two distinct leaves $b,c$ of $T-P$ such that $x=m(a,b,c)$ in $T$. Write $\varphi(a,b,c)=(d_a, d_b, d_c)$ and observe now that $x$ is necessarily the only vertex of $T-P$ that lies on the $bc$-path of $T-P$ such that $d(x,b)=d_b$ and $d(x,c)=d_c$. In particular, it implies that the only possibility for a tree to realize $\varphi$ is to be obtained from $T-P$ after attaching some path $P'$ of length $d_a$ connecting $a$ to $x$. This concludes the proof. 
  \end{proof}
  
  \begin{claim}
   \label{clm: mirror-tw2}
   Every graph admitting a mirror-decomposition has treewidth at most $2$. 
  \end{claim}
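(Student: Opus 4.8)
The plan is to recast the statement symmetrically and then run an induction. Since $T$ and $T'$ are isomorphic via an isomorphism fixing the common leaf set $V(T)\cap V(T')$, the graph $H$ is isomorphic to the \emph{glued double} $D(T)$ obtained from two vertex-disjoint copies $T^{(1)},T^{(2)}$ of $T$ by identifying $\lambda^{(1)}$ with $\lambda^{(2)}$ for every leaf $\lambda$ of $T$; let $v^{(1)},v^{(2)}$ denote the two copies of a vertex $v$ (so $v^{(1)}=v^{(2)}$ when $v$ is a leaf). I would prove, by induction on $|V(T)|$, the following two statements, saying that a width-$\le 2$ tree-decomposition \emph{realizes} a vertex set $Z$ if some bag contains $Z$:
\begin{enumerate}[label=(\alph*)]
  \item for every leaf $p$ of $T$, the graph $D_p(T)$ --- built like $D(T)$ but leaving the pair $p^{(1)},p^{(2)}$ unidentified --- has a width-$\le 2$ tree-decomposition realizing $\{p^{(1)},p^{(2)}\}$;
  \item for every vertex $v$ of $T$, $D(T)$ has a width-$\le 2$ tree-decomposition realizing $\{v^{(1)},v^{(2)}\}$.
\end{enumerate}
Applying (b) to any $v$ yields $\tw(H)=\tw(D(T))\le 2$, which is the claim.

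The base cases are those where $T$ is a path. Then $D_p(T)$ is a path with ends $p^{(1)}$ and $p^{(2)}$: writing its vertices as $p^{(1)}=z_0,z_1,\dots,z_r=p^{(2)}$, the bags $\{z_0,z_1,z_r\},\{z_1,z_2,z_r\},\dots,\{z_{r-1},z_r\}$ form a width-$\le2$ decomposition realizing $\{p^{(1)},p^{(2)}\}$; and $D(T)$ is a cycle or a single edge, which settles (b) when $v$ is a leaf. I would treat (b) for $v$ internal (for arbitrary $T$) as follows: root $T$ at $v$, let $c_1,\dots,c_d$ ($d\ge2$) be its children and $T_j$ the subtree below $c_j$. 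Using that the two copies are glued only along leaves and that distinct $T_j$ meet only at $v$, one checks that $\{v^{(1)},v^{(2)}\}$ separates $D(T)$ into $d$ pieces, the $j$-th isomorphic to $D_v\!\big(T[\{v\}\cup V(T_j)]\big)$ with $v$ playing the role of the unidentified leaf. Each such tree is strictly smaller than $T$, so (a) gives each piece a width-$\le2$ decomposition realizing $\{v^{(1)},v^{(2)}\}$, and chaining these along those bags produces the decomposition of $D(T)$.

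The cases left are (a), and (b) with $v$ a leaf, when $T$ is not a path; here I would peel a pendant path. For (a), let $p,q_1,\dots,q_{t-1},w$ be the path in $T$ from $p$ to the nearest branch vertex $w$, and put $T''=T-\{p,q_1,\dots,q_{t-1}\}$; then $w$ is internal in $T''$, $|V(T'')|<|V(T)|$, and $D_p(T)$ is simply $D(T'')$ with one pendant path attached at $w^{(1)}$ (far end $p^{(1)}$) and another at $w^{(2)}$ (far end $p^{(2)}$). For (b) with $v$ a leaf, pick any other leaf $\mu$, let $w$ be the nearest branch vertex on $\mu$'s pendant path, delete that pendant path keeping $w$ to obtain a smaller $T''$ in which $w$ is internal, and note $D(T)$ is $D(T'')$ with one path attached between $w^{(1)}$ and $w^{(2)}$. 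In both cases the induction hypothesis (b) applied to $T''$ with the internal vertex $w$ supplies a width-$\le2$ decomposition of $D(T'')$ with a bag $B\supseteq\{w^{(1)},w^{(2)}\}$. Attaching a single path between $w^{(1)}$ and $w^{(2)}$ off $B$ does not increase the width, which finishes (b) (the leaf $v$ then lies in some bag automatically). For (a) I still need a bag containing $\{p^{(1)},p^{(2)}\}$: I would get one by \emph{weaving} the two pendant paths $w^{(1)}=r_0,r_1,\dots,r_t=p^{(1)}$ and $w^{(2)}=r_0',r_1',\dots,r_t'=p^{(2)}$, hanging off $B$ the path of size-$3$ bags $\{w^{(1)},w^{(2)},r_1\},\{w^{(2)},r_1,r_1'\},\{r_1,r_1',r_2\},\{r_1',r_2,r_2'\},\dots,\{r_{t-1}',p^{(1)},p^{(2)}\}$, whose last bag realizes $\{p^{(1)},p^{(2)}\}$.

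The only genuinely delicate point --- and the reason for carrying the ``realizing a prescribed pair'' bookkeeping through the induction --- is this gluing step: attaching a path between two vertices of a graph of treewidth $\le2$ can push the treewidth up to $3$ (for instance, it turns $K_4$ minus an edge into $K_4$), and is harmless precisely when those two vertices already lie together in a bag. The weaving trick is what propagates that invariant all the way down to the unidentified leaf pair in (a), and once it is available the mutual recursion between (a) and (b) closes routinely; the main things to verify carefully are the separation claim in (b) and that every recursive call is on a strictly smaller tree.
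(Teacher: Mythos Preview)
Your proof is correct and shares the paper's core idea: induct on $|V(T)|$, strengthen the hypothesis so that some designated pair $\{v,\iota(v)\}$ is realised in a bag, and exploit that an internal vertex of $T$ yields a size-two separator of $H$. The paper's execution is shorter because it strengthens the hypothesis further, building a \emph{single} width-$\le 2$ tree-decomposition in which \emph{every} pair $\{v,\iota(v)\}$ (for all $v\in V(T)$) is simultaneously realised. This makes the inductive step uniform: pick any cut vertex $u$ of $T$, apply the hypothesis to each component $C$ of $T-u$ (the graph $H_C$ again has a mirror-decomposition), and glue all the resulting decompositions at a fresh bag $\{u,\iota(u)\}$. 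Because each piece's decomposition already realises $\{u,\iota(u)\}$, no weaving is needed, and there is no leaf/internal or path/non-path case split; your statements (a) and (b) and the pendant-path peeling become unnecessary once the stronger invariant is carried through.

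One small caveat on generality: your opening identification $H\cong D(T)$ assumes that $V(T)\cap V(T')$ equals the full leaf set of $T$, whereas the paper's definition of a mirror-decomposition only forces common vertices to be leaves, not all leaves to be common. In the actual application inside the proof of the theorem the two trees do meet exactly at their leaves, so your argument suffices there; and your statement (a) is precisely what is needed for the ``one uncommon leaf'' instances that arise in the recursion. But as written you have proved the special case that matters rather than the claim exactly as stated.
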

  
  \begin{proof}[Proof of the Claim]
     We let $H$ be a graph with a mirror-decomposition and $T,T', \iota$ be as in the definition of mirror-decomposition.  We prove by induction on $|V(T)|=|V(T')|\geq 1$ that there exists a tree decomposition of $H$ of width at most $2$, and such for each $v\in V(T)$, there exists a bag that contains $\sg{v,\iota(v)}$.   
   If $|V(T)|\leq 2$, then $|V(H)|\leq 3$ (unless $V(T)$ and $V(T')$ are disjoint, in which case we also conclude immediately), and one can conclude by considering the tree decomposition with a single bag equal to $V(H)$.
   Assume now that $|V(T)|=|V(T')|\geq 3$. In particular, there exists some node $u\in V(T)$ such that $T-u$ is not connected. Then, $\sg{u,\iota(u)}$ forms a separator of size $2$ in $H$ (see \Cref{fig:mirror-trees}). 
   
   Now, observe that for each component $C$ of $T-u$, the graph $H_C:=H[C\cup \iota(C)\cup \sg{u,\iota(u)}]$ admits a mirror-decomposition with respect to the trees $T[C\cup \sg{u}]$ and $T'[\iota(C\cup \sg{u})]$. By choice of $u$, the tree $T[C\cup \sg{u}]$ has strictly fewer vertices than $T$, thus the induction hypothesis implies that $H_C$ admits a tree decomposition $(T_C,\mathcal V_C)$ such that for every $x\in C\cup \sg{u}$, there is a bag containing $\sg{x, \iota(x)}$. We let $t_C\in V(T_C)$ be a node of $T_C$ whose bag contains $\sg{u, \iota(u)}$. We now conclude the proof of the claim by considering the tree decomposition $(\widetilde T,\widetilde{\mathcal V})$ of $H$, where 
   $\widetilde T$ is obtained after taking the disjoint union of the trees $T_C$, and adding a vertex $z$ connected to all nodes $t_C$, with associated bag $\sg{u, \iota(u)}$. 
  \end{proof}
  \begin{figure}[htb]
    \centering
    \includegraphics[scale=0.8]{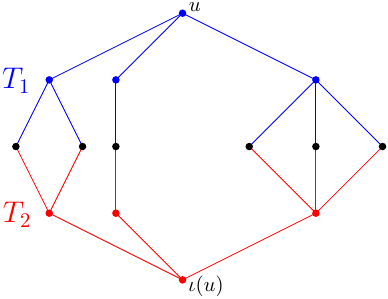}
    \caption{Example of mirrored trees. Notice that $\{u,\iota(u)\}$ indeed forms a separator of the graph.}
    \label{fig:mirror-trees}
  \end{figure}
  
\end{proof}

\section{A linear lower bound}
\label{sec: lb}

In this section, we prove a general lower bound on the pathwidth of graphs edge-coverable by $k$ shortest paths. 

\mainlb*

The remainder of the section is dedicated to the proof of \Cref{prop:low_bound}. We will construct inductively a sequence $(G_k)_{k\geq 1}$ of graphs satisfying the outcome of \Cref{prop:low_bound}.

\paragraph*{Construction of the sequence $(G_k)_{k\geq 1}$.}
We first consider the sequence $(\ell_k)_{k\geq 1}$ of integers defined inductively by setting $\ell_1:=3, \ell_2:=7$ and $\ell_{k+1}:=2(\ell_k+3)$ for each $k\geq 2$.
We now construct the sequence $(G_k)_{k\geq 1}$ inductively, maintaining at each step the property that $G_k$ admits a $k$-layering with $\ell_k$ layers, in which both the left and right extremities contain a single vertex, respectively denoted $u_k$ and $v_k$.

We first define separately $G_1$ and $G_2$: $G_1$ is the path of length $3$, with extremities $u_1,v_1$, and $G_2$ is the cycle of size $14$, seen as the union of two internally disjoint $u_2v_2$-paths of length $7$ (see \Cref{fig: Gk}). We now let $k\geq 2$ and construct inductively $G_{k+1}$. First, we let $H, H'$ denote two disjoint copies of $G_k$, such that $a_k, b_k$ (resp.\ $a'_k, b'_k$) denote the copies of the vertices $u_k,v_k$ in $H$ (resp.\ in $H'$). To obtain $G_{k+1}$, we moreover add an edge between $b_k$ and $a_k'$, two vertices $u_{k+1}$ and $v_{k+1}$, two paths $A_k, B_k$ of length $2$ respectively between $u_{k+1}$ and $a_k$, and between $v_{k+1}$ and $b_k'$, and finally, two paths $C_k, D_k$ of length $l_k+3$ respectively between $u_{k+1}$ and $a_k'$, and between $v_{k+1}$ and $b_k$, whose internal vertices are distinct from all the ones that were present so far (see \Cref{fig: Gk_general}).

An easy induction on $k$ shows that $G_k$ admits a $k$-layering $L_k$ containing $\ell_k$ layers for each $k\geq 1$, whose extremities are $\sg{u_k}$ and $\sg{v_k}$.

\begin{figure}
  \centering
  \includegraphics[scale=0.7]{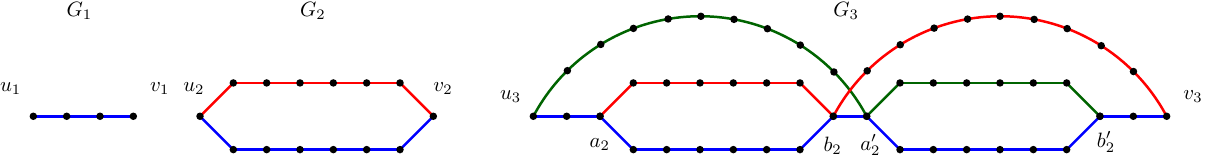}
  \caption{The graphs $G_1,G_2$ and $G_3$. The colored paths form a covering by $k$ shortest paths for each $k\in [3]$.}
  \label{fig: Gk}
\end{figure}

\begin{figure}
  \centering
  \includegraphics[scale=1]{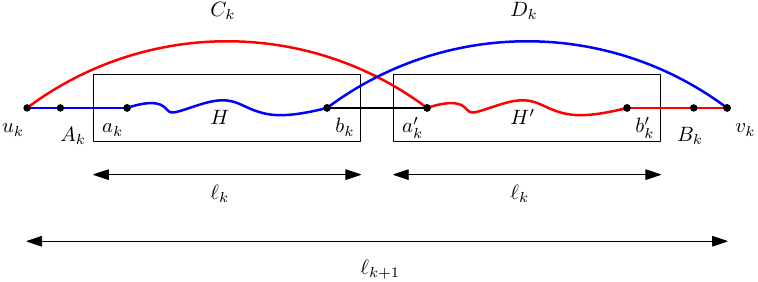}
  \caption{Recursive construction of $G_{k+1}$. The paths $Q_k, Q_{k+1}$ are respectively represented in red and blue.}
  \label{fig: Gk_general}
\end{figure}
 
\paragraph*{Proof of \ref{enum:coverable} in \Cref{prop:low_bound}.}
We show by induction that $G_k$ is edge-coverable by $k$ shortest paths.
We will moreover show that one can choose all of these paths to be $u_kv_k$-paths. In particular, it implies that they all contain exactly one vertex in each layer of $L_k$. The result is immediate for $k\leq 2$, so we now let $k\geq 2$ and assume that it holds for $G_k$. We let $P_1,\ldots, P_k$ and $P'_1,\ldots,P'_k$ denote respectively paths that edge-cover $H$ and $H'$, such that for each $i\in [k]$, $P_i$ (resp. $P'_i$) is a shortest $a_kb_k$-path in $H$ (resp. a shortest $a'_kb'_k$-path in $H'$). For each $i\in [k-1]$, we set $Q_i:= A_i\cdot P_i\cdot b_ka_k \cdot P'_i\cdot B_i$. Moreover, we set $Q_{k}:=C_k\cdot P'_k\cdot B_k$ and $Q_{k+1}:=A_k\cdot P_k\cdot D_k$ (see \Cref{fig: Gk_general}). It is easy to check that $Q_1,\ldots, Q_k$ edge-cover $G_{k+1}$. Moreover, note that each $Q_i$ has at most one vertex in each layer of $L_{k+1}$, implying in particular that it is a shortest $u_{k+1}v_{k+1}$-path, which concludes the proof. 

\paragraph*{Proof of \ref{enum:pathwidth} in \Cref{prop:low_bound}.}
We now show that for each $k\geq 1$, we have $\pw(G_k)\geq k$, which will prove \ref{enum:pathwidth} in \Cref{prop:low_bound} and therefore complete its proof.

\begin{figure}
  \centering
  \includegraphics[scale=0.8]{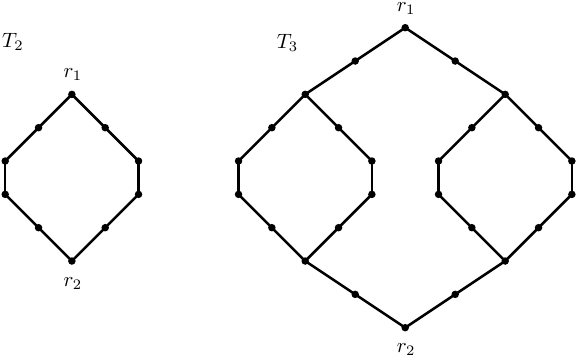}
  \caption{The graphs $T_2$ and $T_3$.}
  \label{fig: Tk}
\end{figure}

For each $k\geq 1$, we let $T_k$ denote the graph obtained as follows: take two disjoint copies $B_1$ and $B_2$ of a complete binary tree of height $k-1$, with respective roots $r_1,r_2$, subdivide all edges once, and add a matching connecting each leaf of $B_1$ to its copy  in $B_2$ (see \Cref{fig: Tk}). Note first that for each $k\geq 1$, $T_k$ is a minor of $G_k$. To see this, observe that an easy induction on $k$ further gives that there exists for each $k\geq 1$ a model of $T_k$ in which the branch sets of the respective roots of the copies $B_1, B_2$ of the binary tree are exactly $\sg{u_k}$ and $\sg{v_k}$ (note also that the edge $b_ka'_k$ can be ignored for this part, i.e.\ $T_k$ is also a minor of $G_k-a_kb_k$).
In particular, as $T_k$ is a minor of $G_k$, we have for each $k\geq 1$ that $\pw(G_k)\geq \pw(T_k)$. It thus only remains to show that $T_k$ has pathwidth at least $k$.
\begin{lemma}\label{lem: lowerbound}
  For any integer $k$, the graph $T_k$ has pathwidth at least $k$.
\end{lemma}
To prove \Cref{lem: lowerbound}, we use the game characterization of pathwidth, in terms of 
the \emph{node searching number}~\cite{KIROUSIS1985181}. We will reuse in what follows some of the vocabulary from \cite{dendris1997fugitive}.
 
\paragraph*{Node searching number.}
The definition of the node searching number of a graph $G$ requires to define the \emph{node searching game}, which is a pursuit-evasion game  on $G$. Intuitively, in this game, $k$ cops try to catch an invisible robber moving on the vertices of $G$, where, by invisible, we mean that before taking a decision, the robber already knows the whole sequence of moves that the cops will follow in the future, and can adapt its own strategy with respect to this knowledge. More precisely, a \emph{search with $k$ cops} on $G$ is a finite sequence $C_1, \dots, C_t$ of subsets of vertices of $G$ of size at most $k$ such that, for $i=2, \dots,t$, the symmetric difference between $C_i$ and $C_{i-1}$ has size at most $1$. Each set $C_i$ represents vertices
occupied by $k$ or fewer cops, and at each step, a cop is either added or removed (or nothing changed). 
Given such a search on $G$, we define inductively its corresponding \emph{free locations} $F_1,\ldots, F_t$ by setting $F_1:=V(G)\setminus C_1$ and for each $i\in \sg{2,\ldots,t}$, 
by letting $F_i$ be the set of all vertices $v$ from $V(G)\setminus C_i$ for which there exists some path from some vertex $u\in F_{i-1}$ to $v$, with all vertices except possibly $u$ in $V(G)\setminus C_i$. Informally, $F_i$ represents the set of all vertices that could be reached by the robber at step $i$, while evading the cops so far. 
We say that a search $C_1,\ldots,C_t$ is \emph{evasive} if the robber can escape the cops with respect to this search, i.e.\ if $C_t\neq \emptyset$. 
The \emph{node searching number} of $G$, denoted $\ns(G)$ is the minimum possible integer $k\geq 0$ for which $G$ admits a non-evasive search with $k$ cops. Intuitively, $\ns(G)>k$ means that when playing with $k$ cops, whatever strategy the cops choose, the robber will be able to find accordingly a strategy to escape them. A classic result of Kirousis and Papadimitriou \cite{KIROUSIS1985181} states that for every graph $G$, we have $\ns(G)=\pw(G)+1$. In particular, in order to conclude our proof of \Cref{prop:low_bound}, it will be enough to show that for each $k\geq 1$, $\ns(T_k)\geq k+1$.

In order to show this, we need to slightly extend the definition of the game by allowing some winning positions for the robber. For a set $W\subseteq V(G)$ of vertices, which we call a \emph{winning state}, a search $C_1,\ldots,C_t$ on $G$ is \emph{$W$-winning} if there exists some $i\in [t]$ for which $F_i\cap W\neq \emptyset$. The \emph{$W$-node searching number} of $G$, denoted $\ns_W(G)$ is the minimum possible integer $k\geq 0$ for which $G$ admits a search which is neither $W$-winning, nor non-evasive. More informally, $\ns_W(G)>k$ means that when playing with $k$ cops, whatever strategy the cops choose, the robber will be able to find accordingly a strategy allowing him either to enter $W$ at some step, or to escape them forever. Combined with all previous observations, next lemma will immediately conclude our proof of \Cref{prop:low_bound}. In what follows, for each $k\geq 1$, we let $T'_k$ denote the graph obtained from $T_k$ after adding two new vertices $s_1, s_2$ of degree $1$ respectively attached to the roots $r_1,r_2$ of the binary trees used in the definition of $T_k$. 

\begin{lemma}\label{lem: lowerbound_induction}
  We have:
  \begin{enumerate}
    \item\label{it: ns1} $\ns(T_k)\geq k+1$ for every $k\geq 1$, and
    \item\label{it: ns2} $\ns_{\{s_1,s_2\}}(T_k') \geq k+2$ for every $k\geq 2$.  
  \end{enumerate}
\end{lemma}

\begin{proof}

  The proof goes by induction on $k\geq 1$. The base case $k=1$ is immediate, as $T_1$ is then an edge with extremities $r_1,r_2$, in which case it is easy to see that one cop can never catch the robber \footnote{Recall that in order to move, the cop must first disappear from the graph. This follows from the condition $|C_{i-1}\Delta C_i|\leq 1$ in the definition of a search.}, implying \Cref{it: ns1}. 
  $T_2$ is a cycle of size $10$ on which $r_1$ and $r_2$ are two antipodal vertices. In this case, it is not hard to check that a robber will always be able to escape to two cops in the node searching game showing \Cref{it: ns1} when $k=2$. 
  We now show that \Cref{it: ns2} also holds when $k=2$.  
  For this, we describe a winning strategy for the robber in the $\sg{s_1,s_2}$-node searching game with $3$ cops. Let $P, Q$ denote the two paths of length $3$ obtained when removing $r_1, r_2$ from $T_2$ and consider a search 
  $C_1,\ldots,C_t$ of $T'_2$. We also set $S_1:=\sg{r_1, s_1}$ and $S_2:=\sg{r_2, s_2}$. 
  First, note that if each set $C_i$ intersects both $S_1$ and $S_2$, then the robber has a winning strategy to evade the cops by choosing any of the two paths $P,Q$ and staying in it the whole time (such a path will be occupied by at most one cop at each step, allowing the robber to win). Assume now that this is not the case, and let $i\in [t]$ be the first step of time for which one of the two sets $S_1, S_2$, say $S_1$ does not intersect $C_i$. If $i=1$, then the robber immediately wins by choosing $s_1$ as its initial position, so we assume that $i\geq 2$. 
  Then for each $1\leq j<i$, both paths $P$ and $Q$ are occupied by at most one cop, and moreover one of them, say $P$ is occupied by no cop at step $i-1$ (if $i\geq 2$). In particular, as $|C_{i-1}\Delta C_i|\leq 1$ and as $S_1$ contains a vertex from $C_{i-1}\setminus C_i$, $P$ is also not occupied by a cop at step $i$. A winning strategy for the robber then consists in staying in $P$ during the first $i-1$ steps and escaping so far the only cop that enters $P$, and then reaching $s_1$ at step $i$. 
  We assume from now on that $k\geq 3$, and that \Cref{it: ns1,it: ns2} holds in $T_{k-1}$. 
  
  In what follows, we let $H_1, H_2$ denote the two disjoint copies of $T_{k-1}$ in $T_k$, and $u_i^j$ ($i,j\in [2]$) denote the copies of $r_1,r_2$ in $H_1, H_2$, as depicted in \Cref{fig: Proof_ns}. We moreover let 
  $H_1'$ and $H_2'$ denote the two disjoint copies of $T_{k-1}'$ in $T_k$, such that for each $i\in [2]$, $H'_i$ contains $H_i$, and for each $i,j\in [2]$, we let $v_i^j$ denote the copies of the vertices $s_1,s_2$ in $H_1', H_2'$, as depicted in \Cref{fig: Proof_ns}. We moreover set $W_j:=\sg{v_1^j, v_2^j}$ for each $j\in \sg{1,2}$.

   \begin{figure}
  \centering
  \includegraphics[scale=1]{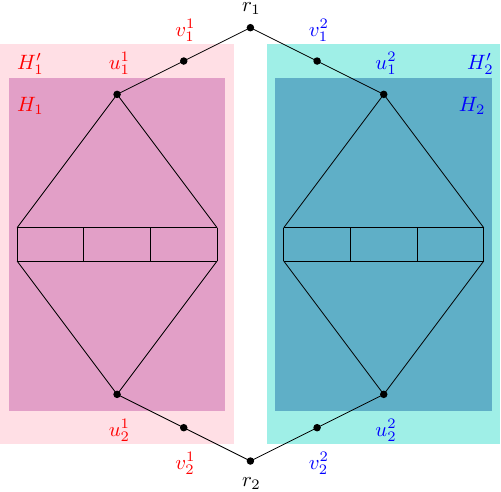} 
  \caption{Schematic description of the decomposition of $T_k$ in the proof of \Cref{lem: lowerbound_induction}. The triangles represent copies of the complete binary trees of heigth $k-2$ involved in the definition of $T_{k-1}$.}
  \label{fig: Proof_ns}
\end{figure}

We start proving \Cref{it: ns1} and consider a search $C_1, \ldots,C_t$ on $G$ with $k$ cops. We will show that this search is evasive. Intuitively, the strategy simply consists for the robber in playing the $W_j$-node search game in some subgraph $H'_j$ ($j\in [2]$). Applying \Cref{it: ns2} from induction hypothesis on $H'_j$, the robber will be able either to escape the cops until the end of the search, or to jump into $H'_{3-j}$ by travelling through some vertex of the set $W_j$, and then start playing iteratively the $W_{3-j}$-node search game in $H'_{3-j}$. This is slightly more technical to prove formally, as one must ensure that when jumping from $H'_j$ to $H'_{3-j}$, the robber can moreover reach some initial configuration of the winning strategy he will use later in $H'_{3-j}$. 

More formally, for each $i\in [t]$, we construct the \emph{projection $\pi_1(C_i)\in \binom{V(H'_1)}{\leq k}$} of $C_i$ on $H'_1$ as follows. First, for each $u\in C_i\cap V(H'_1)$, we add $u$ to $\pi_1(C_i)$. Then, if $C_i$ contains any of the vertices $r_1, v_1^2, u_1^2$ (these correspond to the vertices on the shortest $v_1^1$-$u_1^2$ path), we add $v_1^1$ to $\pi_1(C_i)$. Symmetrically, if $C_i$ contains any of the vertices $r_2, v_2^2, u_2^2$, we add $v_2^1$ to $\pi_1(C_i)$. If necessary, for each vertex $u$ in $C_i\cap H_2\setminus\sg{u_1^2, u_2^2}$, we add at least one of the two vertices $v_1^1, v_2^1$, by respecting the following rule that if $H_2$ contains at least two vertices from $C_i$, then both $v_1^1$ and $v_2^1$ are added to $\pi_1(C_i)$. We claim that a simple induction on $i\in [t]$ implies that we can construct such sets $\pi_1(C_i)$ with the additional property that $\pi_1(C_1),\ldots,\pi_1(C_t)$ is a search on $H'_1$ (with at most $k$ cops). We define symmetrically a projected search $\pi_2(C_1),\ldots,\pi_2(C_t)$ on $H'_2$.

We now describe a winning strategy for the robber to show that $C_1,\ldots,C_t$ is evasive. We will ensure that at each step, the robber occupies a vertex of $H_1\cup H_2$. A \emph{safe state} is a configuration of the game where at some step $i\in [t]$, the robber occupies some vertex in some subgraph $H_j$ ($j\in [2]$) such that $|C_i\cap V(H_j)|\leq 1$. 
We will first show that we may assume that the robber can reach some safe state. 
We assume without loss of generality that $|V(H_1)\cap C_1|\leq k-1$. By induction hypothesis, \Cref{it: ns2} holds in $H'_1$, implying that the robber has a strategy to win the $W_1$-node searching game in $H'_1$ with respect to the projected search $\pi_1(C_1),\ldots, \pi_1(C_t)$. It then implies that one of the following holds:
\begin{itemize}
 \item either the robber can escape the cops with respect to $\pi_1(C_1),\ldots,\pi_1(C_t)$, in which case he also has a strategy to escape the cops in $T_k$ with respect to the initial search $C_1,\ldots,C_t$, and staying in $H'_1$ at every step,
 \item or there exists some step $i\in [t]$ such that the robber can stay in $H'_1$ and escape to the cops until step $i$, and then reach one of the two vertices of $W_1$. Without loss of generality, assume that this vertex is $v_1^1$. The definition of $\pi_1$ then implies in particular that $|C_i\cap V(H_2)|\leq 1$, and moreover that none of the vertices $v_1^1, r_1, v_1^2, u_1^2$ belongs to $C_i$. This then implies that the robber can safely travel to the vertex $u_1^2$ at step $i$, and reach a safe state. 
\end{itemize}
We now prove that if at some step $i\in [t]$, the robber could reach some safe state in which he occupies a vertex $x$ of $H_j$, then it can either escape the cops until the end of the search by staying in $H_j$, or reach another safe state at some step $i'>i$. Observe that applying iteratively this claim immediately concludes the proof of \Cref{it: ns1}. To prove the claim, the arguments are almost the same as above: assume that at step $i$, the robber occupies a vertex of $H_1$ and that $|V(H_1)\cap C_i|\leq 1$. If $i=t$, then we immediately conclude. Otherwise, consider the search $\pi_1(C_{i+1})\cup \sg{v_1^1, v_2^1}, \pi_1(C_{i+1}),\ldots, \pi_1(C_t)$ of $H'_1$ (note that this indeed defines a search as $\pi_1(C_{i+1})$ contains at least one of the vertices $v_1^1, v_2^1$). By induction hypothesis, the robber has a winning strategy for this search when playing the $W_1$-node searching game in $H'_1$, for which its initial position is some vertex $y\in V(H_1)$. We now observe that as $H_1$ is $2$-connected, and as $|V(H_1)\cap C_i|\leq 1$, the robber can, at the end of step $i$, move from $x$ to $y$ without being caught by the cops. He can then reproduce the winning strategy in $H'_1$ for the $W_1$-node searching game, and according to the outcome, the same arguments used in the previous case distinction imply that he can either escape the cops until the last step, or reach another safe state for some $i'>i$.

We now show \Cref{it: ns2}, using \Cref{it: ns1} from the induction hypothesis. The proof is basically the same as for the case $k=2$. Let $C_1,\ldots,C_t$ be a search with $k+1$ cops on $T'_k$. We let $W:=\sg{s_1, s_2}$, $S_1:=\sg{v_1^1, r_1, s_1, v_1^2}$ and $S_2:=\sg{v_2^1, r_2, s_2, v_2^2}$, so that $H_1, H_2$ are the two connected component of $G-(S_1\cup S_2)$. If for each $i\in [t]$, $|C_i\cap V(H_1)|\leq k-1$, the robber can escape to the cops by reproducing some winning strategy given by \Cref{it: ns1} in $H_1$, implying that $C_1,\ldots,C_t$ is evasive. The same applies symmetrically if $|C_i\cap V(H_2)|\leq k-1$ for each $i\in [t]$. Otherwise, we can consider the minimum $i\in [t]$ such that $|C_i\cap V(H_1)|=k$, and we may moreover assume that $|C_{i'}\cap V(H_2)|\leq k-1$ for each $i'\leq i$. In particular, $C_i$ must be disjoint to at least one of the sets $S_1, S_2$. If $i=1$, then the robber immediately wins the $W$-node search game by choosing $s_1$ as its initial position. Otherwise, by induction hypothesis, \Cref{it: ns1} holds in $H_2$ and implies that the robber has a winning strategy to escape the cops until step $i$, while staying in $H_2$. In particular, as $|C_i\cap V(H_1)|=k$, we then have $|C_i\cap V(H_2)|\leq 1$. As $H_2$ is $2$-connected, it then implies that at the start of step $i$, the robber can safely reach one of the sets $S_1, S_2$ which is disjoint from $C_i$, and thus also $W$. This concludes the proof of \Cref{it: ns2}.

\end{proof}

\section{Conclusion}
\label{sec: ccl}

As mentioned in the introduction, the best lower bound we know on the possible pathwidth of graphs edge-coverable by $k$ shortest paths is $k$. Together with \Cref{thm: 3-paths}, it suggests that the upper bound from \Cref{thm: main} could be potentially improved to a linear bound.

\begin{question}
 \label{question: linear}
 Let $G$ be a graph edge-coverable by $k$ shortest paths. Is it true that $\pw(G)=O(k)$? And that $\pw(G)\leq k$?
\end{question}

Another observation going in the direction of \Cref{question: linear} is that graphs edge/vertex-coverable by $k$ shortest paths have \emph{cop number} at most $k$~\cite{AF84}. In particular, it is well-known (and not hard to prove) that the cop number of a graph is upper bounded by its treewidth, and thus also by its pathwidth. 

The second question which is left open by our work (and asked in ~\cite{DFPT24}) concerns the existence of a polynomial upper-bound on the pathwidth of graphs which are vertex-coverable by $k$ shortest paths. At the moment, it does not seem for us that the ideas from \Cref{sec: poly} generalize to this case.

Another natural question consists in asking if \Cref{thm: 2-trees} generalizes in some way to larger values of $k$: 

\begin{question}
 \label{question: trees} 
 Does there exist a function $f: \mathbb N\to \mathbb N$ such that for every 
 $k\geq 3$ and every graph $G$ edge-coverable by $k$ isometric subtrees, we have $\tw(G)\leq f(k)$? If yes, can $f$ be polynomial?
\end{question}

However, Bastide, Duron, Hodor, Liu and Nie~\cite{BDHLN} found constructions of graphs edge-coverable by $4$ isometric subtrees that contain arbitrarily large subdivided walls as a subgraph, implying a negative answer to Question \ref{question: trees}. A question left open by their work concerns the existence of graphs edge-coverable by $3$ isometric subtrees and with arbitrarily large treewidth. 
It might also be interesting to find some restricted hypothesis under which \Cref{question: trees} could hold.

\section*{Acknowledgements}
We would like to thank the anonymous reviewers for their instructive feedbacks about the conference version of this paper. In particular one of them pointed to us that the lower bound from \Cref{prop:low_bound} did not follows from \cite{DFPT24} as we had claimed in a previous version of the paper.
We thank Marcin Bria\'nski for the example from \Cref{sec: intro} (see \Cref{fig:vertex-coverable-trees}) of graphs vertex-coverable by two isometric trees with large treewidth. We also thank Florent Foucaud for mentioning the potential algorithmic application of \Cref{thm: main} to \textsc{Isometric Path Edge-Cover}.

\bibliographystyle{alpha}
\bibliography{biblio}

\end{document}